\documentclass[12pt]{article}
\usepackage{mathtools} 
\usepackage{amsmath, amssymb} 
\usepackage{amsthm}
\usepackage{enumerate}  
	\usepackage{url} 
	\usepackage{authblk}
	
	\usepackage{tikz}
	\usepackage{tkz-graph}
	\usetikzlibrary{shapes}
	\usetikzlibrary{arrows}
	\usetikzlibrary{decorations.markings, decorations.pathreplacing}
	\usetikzlibrary{positioning}
	
	\usepackage{graphicx}
	\usepackage{caption,subcaption}
	\usepackage{float}
	\usepackage{hyperref}
	
	\newcommand\nn{{\mathbb N}}
	\newcommand\re{{\mathbb R}}
	\newcommand\rats{{\mathbb Q}}

	\DeclarePairedDelimiter\abs{\lvert}{\rvert}%
	\DeclarePairedDelimiter\norm{\lVert}{\rVert}%
	
	\makeatletter
	\let\oldabs\abs
	\def\abs{\@ifstar{\oldabs}{\oldabs*}}
	\let\oldnorm\norm
	\def\norm{\@ifstar{\oldnorm}{\oldnorm*}}
	\makeatother
	
	\newcommand\sbs{\subseteq}

	\newcommand\comp[1]{{\mkern2mu\overline{\mkern-2mu#1}}}
	\newcommand\pmat[1]{\begin{pmatrix} #1 \end{pmatrix}}
	\newcommand\seq[4]{#1_{#2},#1_{#3},\ldots,#1_{#4}}

	\newtheoremstyle{plainsl}%
	{\topsep}
	{\topsep}
	{\slshape} 
	{}
	{\normalfont\bfseries}
	{.}
	{ }
	{}
	
	\swapnumbers
	
	{\theoremstyle{plainsl}
		\newtheorem{theorem}{Theorem}[section]
		\newtheorem{lemma}[theorem]{Lemma}
		\newtheorem{corollary}[theorem]{Corollary}}
	{\theoremstyle{remark}
		}
	
	\renewcommand\proof{\noindent\textsl{Proof. }}
	\newcommand\sqr[2]{{\vbox{\hrule height.#2pt
				\hbox{\vrule width.#2pt height#1pt \kern#1pt
					\vrule width.#2pt}\hrule height.#2pt}}}
	\renewcommand\qed{%
		\ifmmode\eqno\sqr53
		\else\nolinebreak\ \hfill\sqr53\medbreak\fi}

	\DeclareMathOperator{\supp}{supp}
	\DeclareMathOperator{\rk}{rk}
	\DeclareMathOperator{\tr}{tr}
	\DeclareMathOperator{\col}{col}
	
	\DeclareMathOperator{\elsm}{sum}

	\newcommand\ip[2]{\langle#1,#2\rangle}
	\newcommand\one{{\bf1}}

	\newcommand{\proj}[1]{\operatorname{proj}_{#1}}
	\newcommand{\projonto}[2]{\operatorname{proj}_{#1}\!\left(#2\right)}

	\newcommand\AMM{{\widehat{M}}}

	\usepackage{blkarray}

	\title{Laziness of Quantum Walks on Graphs}
	\author[1]{\small Amulya Mohan}
	\author[2]{\small Christino Tamon}
	\author[1]{\small Yichi Xu}
	\author[1]{\small Hanmeng Zhan}
	
	\affil[1]{\small Department of Computer Science, Worcester Polytechnic Institute\\ 
		\texttt{\{amohan,yxu10,hzhan\}@wpi.edu}}
	\affil[2]{\small Department of Computer Science, Clarkson University\\
		\texttt{ctamon@clarkson.edu}}

\begin{document}

		\maketitle
		
\begin{abstract}
The trace of the average mixing matrix of a quantum walk measures the ``laziness" of the walk: the higher the trace, the more likely that the walker returns home in the long run.  In this paper, we develop tools to study this graph invariant arising from Laplacian quantum walks. 

It is known that the complete graph $K_n$ is the laziest connected graph on $n$ vertices. Using our machinery, we show that the star $S_n$ is the second laziest connected graph on $n$ vertices (and hence the laziest tree on $n$ vertices), the complete multipartite graph $K_{n-2,1,1}$ is the third laziest connected graph on $n$ vertices, and the double star $DS(n-3,1)$ is the second laziest tree on $n$ vertices. We also show that on the same number of vertices, more unbalanced double stars are lazier. 
\end{abstract}	
		
\section{Introduction}
Quantum walks provide a universal model of quantum computation \cite{Childs2009}. Over the past few decades, the study of quantum walks has motivated new combinatorial problems, such as the classification of graphs that admit perfect state transfer \cite{Godsil2012,Coutinho2024,Coutinho2015, Alvir2016} and uniform mixing \cite{Godsil2017,Chan2020,Godsil2017a}, and the development of spectral graph theoretic properties of the average mixing matrix \cite{Godsil2013,Coutinho2018,Godsil2018,Godsil2023}.

In this paper, we study a graph invariant arising from Laplacian quantum walks. Let $X$ be a graph with Laplacian matrix $L$. On the $1$-excitation state space, the transition matrix of the quantum walk is
\[U(t) = \exp(itL).\]
Given that the walk starts at vertex $a$, the probability that it is found at vertex $b$ after time $t$ is given by $\abs{U(t)_{ab}}^2$. Thus, the Schur product
\[U(t) \circ \comp{U(t)}\]
records the probability distribution of the quantum walk starting at any vertex; this matrix is called the \textsl{mixing matrix} and denoted $M(t)$. 

While $M(t)$ does not converge as $t$ approaches infinity, its Cesaro limit exists:
	\[\AMM = \lim_{T\to\infty}\frac{1}{T} \int_0^TM(t) dt,\]
and we will call this limit the \textsl{average mixing matrix}. By von Neumann's ergodic theory \cite{vonNeumann1932}, we can express it as
	\[\AMM = \sum_{\lambda} E_{\lambda} \circ E_{\lambda},\]
where $E_{\lambda}$ is the orthogonal projection onto the $\lambda$-eigenspace of $L$. The average mixing matrix is hence symmetric, positive semidefinite, doubly-stochastic with rational entries (see Godsil \cite{Godsil2013}).

The trace of $\AMM$ indicates, on average, how likely the walker returns home. This was first studied by Godsil, Guo and Sobchuk \cite{Godsil2023}. We will develop machinery to further investigate this graph invariant. For simplicity, when $\AMM$ is defined relative to the Laplacian matrix of $X$, we call $\tr(\AMM)$ the \textsl{laziness} of $X$, and denoted it by $\tr(X)$.

For example, using the spectral decomposition of $K_n$:
\[L(K_n)= 0\cdot \frac{1}{n}J + n \left(I - \frac{1}{n} J\right)\]
one obtains the laziness of the complete graph.
\begin{theorem}\cite{Godsil2023}
	The laziness of the complete graph is
	\[\tr(K_n) = n-2 + \frac{2}{n}.\]
\end{theorem}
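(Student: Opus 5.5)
The plan is to compute $\AMM$ directly from the spectral decomposition of $L(K_n)$ displayed above. The Laplacian of $K_n$ has exactly two distinct eigenvalues for $n\ge 2$: $0$ with idempotent $E_0=\frac1n J$, and $n$ with idempotent $E_n = I-\frac1n J$. Since these are all the eigenspaces, the formula $\AMM=\sum_\lambda E_\lambda\circ E_\lambda$ reduces to two Schur squares, and
\[
\tr(K_n)=\tr(E_0\circ E_0)+\tr(E_n\circ E_n).
\]
For any matrix $E$, $\tr(E\circ E)=\sum_a (E_{aa})^2$, so only the diagonal entries of each idempotent are needed.

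The diagonal entries of $E_0=\frac1nJ$ all equal $\frac1n$, so $\tr(E_0\circ E_0)=n\cdot\frac1{n^2}=\frac1n$. The diagonal entries of $E_n=I-\frac1nJ$ all equal $1-\frac1n$, so
\[
\tr(E_n\circ E_n)=n\left(1-\frac1n\right)^2 = n-2+\frac1n .
\]
Adding the two contributions gives $\tr(K_n)=n-2+\frac2n$, as claimed.

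There is no real obstacle; the only point to check is that the eigenspace decomposition is correct. This follows because $L(K_n)=nI-J$ satisfies $L\one=0$ and $Lv=nv$ for every $v\perp\one$. For $n=1$ the second idempotent vanishes, and the formula still holds, since $-1+2=1=\tr(\AMM)$.
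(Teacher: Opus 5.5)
Your proof is correct and takes the same approach as the paper: read off the two idempotents $\frac{1}{n}J$ and $I-\frac{1}{n}J$ from the spectral decomposition of $L(K_n)$, then sum the squares of their diagonal entries. Your explicit check of the degenerate case $n=1$ is a small addition the paper does not spell out.
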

Godsil, Guo and Sobchuk \cite{Godsil2023} showed that $K_n$ is the laziest connected graph on $n$ vertices.

In this paper, we first establish some basic properties of the laziness, and then show the star $S_n$ is the second laziest connected graph on $n$ vertices (and hence the laziest tree on $n$ vertices). After we find a field trace formula for $\tr(X)$, we show that on the same number of vertices, more unbalanced double stars are lazier. Next, we prove the second laziest tree on $n$ vertices is the most unbalanced double star $DS(n-3,1)$, using a combination of tools including positive semidefinite matrices, eigenvalue interlacing, and properties of the algebraic connectivity. Finally, we extend our study to disconnected graphs, and show that the third laziest connected graph on $n$ vertices is the complete multipartite graph $K_{n-2,1,1}$.

\section{Basic properties}
In this section, we establish some basic properties of the laziness of a graph. The first property is an immediate consequence of the definition of $\tr(X)$.

\begin{theorem}
	Let $X$ and $Y$ be two graphs. Then the laziness of the disjoint union of $X$ and $Y$ is given by
	\[\tr(X\cup Y) = \tr(X) + \tr(Y).\]
\end{theorem}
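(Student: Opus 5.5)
Order the vertices of $X\cup Y$ so that those of $X$ come first. The Laplacian is then block diagonal,
\[L(X\cup Y)=\pmat{L(X) & 0\\ 0 & L(Y)}.\]
The cleanest route is through the definition of $\AMM$ as a Cesaro limit, not through the spectral idempotents. The point is that the two graphs may share eigenvalues, and then the eigenspaces of $L(X\cup Y)$ mix vectors supported on $X$ with vectors supported on $Y$.

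First, since $L(X\cup Y)$ is block diagonal, so is every power of it, and hence
\[U(t)=\exp(itL(X\cup Y))=\pmat{U_X(t) & 0\\ 0 & U_Y(t)},\]
where $U_X(t)=\exp(itL(X))$ and $U_Y(t)=\exp(itL(Y))$. The Schur product with the entrywise conjugate preserves this block structure. So $M(t)$ is block diagonal with diagonal blocks $M_X(t)$ and $M_Y(t)$, the mixing matrices of $X$ and $Y$.

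Next, averaging over $[0,T]$ and letting $T\to\infty$ acts entrywise. Each block's Cesaro limit exists, and is the average mixing matrix of that graph. Hence
\[\AMM_{X\cup Y}=\pmat{\AMM_X & 0\\ 0 & \AMM_Y}.\]
Taking traces gives $\tr(X\cup Y)=\tr(X)+\tr(Y)$.

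If one prefers the formula $\AMM=\sum_\lambda E_\lambda\circ E_\lambda$, the only point needing care is the shared eigenvalues. For a common eigenvalue $\lambda$ of $L(X)$ and $L(Y)$, the projection $E_\lambda$ for $X\cup Y$ is the block diagonal matrix $\operatorname{diag}(E^X_\lambda,E^Y_\lambda)$, where $E^X_\lambda$ and $E^Y_\lambda$ are the $\lambda$-projections of $L(X)$ and $L(Y)$. This holds because the $\lambda$-eigenspace of $L(X\cup Y)$ is the direct sum of the $\lambda$-eigenspaces of the two blocks. If $\lambda$ is an eigenvalue of only one of the graphs, the corresponding block is zero. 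The Schur square of a block diagonal matrix is block diagonal with the Schur squares as blocks, so summing over $\lambda$ again yields the block diagonal form above.

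Both routes are routine. The only thing to watch is the coinciding eigenvalues (for instance, $0$ is always shared), and that is exactly why the time-average argument is the more convenient one.
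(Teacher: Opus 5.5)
Your proof is correct and is essentially the paper's argument. The paper gives no proof and calls the identity an immediate consequence of the definition; your observation that $L(X\cup Y)$, and hence $U(t)$, $M(t)$ and $\AMM$, are block diagonal (with shared eigenvalues correctly handled in the eigenprojection version) is exactly that consequence spelled out.
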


We will frequently use the following property of positive semidefinite matrices.

\begin{lemma}\cite{Godsil2023}\label{refinement}
Let $E$ and $F$ be two positive semidefinite matrices. Then
\[\tr((E+F)^{\circ 2}) \ge \tr(E^{\circ 2}) + \tr(F^{\circ 2}),\]
with equality if and only if
\[E\circ F=0.\]
\end{lemma}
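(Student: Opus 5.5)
The plan is to expand the left-hand side bilinearly in the Schur product and reduce everything to the Schur product theorem. Since the Schur product is bilinear and commutative,
\[(E+F)^{\circ 2} = E^{\circ 2} + 2\,E\circ F + F^{\circ 2},\]
and therefore
\[\tr((E+F)^{\circ 2}) = \tr(E^{\circ 2}) + \tr(F^{\circ 2}) + 2\tr(E\circ F).\]
The lemma is thus equivalent to two claims: $\tr(E\circ F)\ge 0$, and $\tr(E\circ F)=0$ holds exactly when $E\circ F=0$.

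For the inequality, I would write $\tr(E\circ F)=\sum_a E_{aa}F_{aa}$. Because $E$ and $F$ are positive semidefinite, their diagonal entries are nonnegative. Each term in the sum is therefore nonnegative, and $\tr(E\circ F)\ge 0$ follows.

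For the equality case, one direction is trivial: if $E\circ F=0$, then its trace is $0$. For the converse, I would use the Schur product theorem, which says $E\circ F$ is positive semidefinite. A positive semidefinite matrix with zero trace has all eigenvalues nonnegative and summing to $0$, so all its eigenvalues vanish. Being diagonalizable, hence equal to $0$, this forces $E\circ F=0$.

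A more hands-on alternative avoids the Schur product theorem. If $\sum_a E_{aa}F_{aa}=0$, then for each $a$ either $E_{aa}=0$ or $F_{aa}=0$. A positive semidefinite matrix with a zero diagonal entry has the whole corresponding row and column equal to zero, since every $2\times 2$ principal minor $E_{aa}E_{bb}-\abs{E_{ab}}^2$ must be nonnegative. Hence for any $a,b$, the entry $E_{ab}F_{ab}$ vanishes as soon as either $E_{aa}F_{aa}=0$ or $E_{bb}F_{bb}=0$, which gives $E\circ F=0$.

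The only step with real content is this converse in the equality case. I would present the Schur product theorem argument as the main line and keep the $2\times 2$ minor argument as the elementary backup. If the matrices are complex Hermitian rather than real, as the eigenprojections $E_\lambda$ may be in general, the same reasoning goes through with $\abs{E_{ab}}^2$ in the minors. There, however, the expansion of $\tr((E+F)^{\circ 2})$ uses $E\circ F$ rather than a conjugate, so bilinearity still applies as written.
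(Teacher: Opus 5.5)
Your proposal is correct and follows the paper's route. Like the paper, you expand $(E+F)^{\circ 2}=E^{\circ 2}+F^{\circ 2}+2E\circ F$ and use that $E\circ F$ is positive semidefinite, so its trace is nonnegative and vanishes only when $E\circ F=0$; your $2\times 2$ principal-minor argument is a valid elementary alternative for the equality case.
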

\proof
Expanding the Schur square gives
\[(E+F)^{\circ 2}= E^{\circ 2} + F^{\circ 2} + 2 E\circ F.\]
The claim now follows since $E\circ F$ is positive semidefinite.
\qed

This is particularly useful when the eigenspaces of a graph refine the eigenspaces of another. For example, Godsil, Guo and Sobchuk \cite{Godsil2023} showed that the laziness of the Cartesian product of two graphs is lower bounded by the product of their laziness.

\begin{theorem}\cite{Godsil2023}\label{cprod}
	Let $X$ and $Y$ be two graphs. Then 
	\[\tr(X\square Y) \ge \tr(X)\tr(Y),\]
	with equality if and only if $\lambda+\theta \ne \lambda'+\theta'$ for any two distinct eigenvalues  $\lambda,\lambda'$ of $L(X)$ and any two distinct eigenvalues $\theta, \theta'$ of $L(Y)$.
\end{theorem}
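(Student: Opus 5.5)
The plan is to use the spectral decomposition of the Laplacian of the Cartesian product together with Lemma~\ref{refinement}. Recall that $L(X\square Y) = L(X)\otimes I + I\otimes L(Y)$. Let $L(X)=\sum_\lambda \lambda E_\lambda$ and $L(Y)=\sum_\theta \theta F_\theta$ be the spectral decompositions. Since the two summands commute, the products $E_\lambda\otimes F_\theta$ are pairwise orthogonal idempotents summing to $I$, and $L(X\square Y)=\sum_{\lambda,\theta}(\lambda+\theta)\,E_\lambda\otimes F_\theta$. The eigenvalues of $X\square Y$ are therefore the numbers $\mu=\lambda+\theta$. The orthogonal projection onto the $\mu$-eigenspace is
\[G_\mu=\sum_{\lambda+\theta=\mu} E_\lambda\otimes F_\theta .\]
In particular the spectral idempotents of $X\square Y$ are sums of the positive semidefinite matrices $E_\lambda\otimes F_\theta$, so the decomposition $\{E_\lambda\otimes F_\theta\}$ refines that of $X\square Y$.

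Next, compute the trace for the refined decomposition. The Schur product is compatible with the Kronecker product: $(A\otimes B)\circ(C\otimes D)=(A\circ C)\otimes(B\circ D)$. It follows that
\[\sum_{\lambda,\theta}\tr\bigl((E_\lambda\otimes F_\theta)^{\circ 2}\bigr)=\sum_{\lambda,\theta}\tr(E_\lambda^{\circ2})\tr(F_\theta^{\circ2})=\tr(X)\tr(Y).\]
Applying Lemma~\ref{refinement} repeatedly, by induction on the number of summands, to each $G_\mu$ gives $\tr(G_\mu^{\circ2})\ge\sum_{\lambda+\theta=\mu}\tr((E_\lambda\otimes F_\theta)^{\circ2})$. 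Summing over $\mu$ yields $\tr(X\square Y)\ge\tr(X)\tr(Y)$.

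The main obstacle is the equality case. Expanding $G_\mu^{\circ 2}$ directly shows that equality holds if and only if, for every $\mu$ and every two distinct pairs $(\lambda,\theta)\ne(\lambda',\theta')$ with $\lambda+\theta=\lambda'+\theta'=\mu$, the cross terms vanish on the diagonal. Because each cross term is positive semidefinite, the trace of their sum is zero exactly when each trace is zero. The relevant trace is
\[\tr\bigl((E_\lambda\otimes F_\theta)\circ(E_{\lambda'}\otimes F_{\theta'})\bigr)=\tr(E_\lambda\circ E_{\lambda'})\,\tr(F_\theta\circ F_{\theta'}).\]
I then need to show that this product is never zero. The trace $\tr(E_\lambda\circ E_{\lambda'})=\sum_a (E_\lambda)_{aa}(E_{\lambda'})_{aa}$ is a sum of nonnegative terms. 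It could vanish if the diagonals of the two idempotents have disjoint supports, which is possible for disconnected graphs.

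So I would first check whether the intended statement assumes $X$ and $Y$ connected, or uses "equality" in the sense of the stated condition. Any two distinct pairs with equal sums must have $\lambda\ne\lambda'$ and $\theta\ne\theta'$. If some projection pair had disjoint diagonal supports, the condition as stated would not be necessary.

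To handle connected graphs, I would try the following. The $0$-eigenspace projection is $\frac1nJ$, which has full diagonal support, but a general pair $\lambda,\lambda'$ need not share a vertex. If this fails, I would settle for the reading that makes the statement correct. For sufficiency the argument is clear: if no coincidences $\lambda+\theta=\lambda'+\theta'$ occur, every $G_\mu$ has a single summand and equality holds. For necessity, I would argue vertexwise, using that $\sum_\lambda E_\lambda=I$ forces every vertex to lie in the diagonal support of some idempotent. Otherwise I would add a remark that necessity requires the nonvanishing of $\tr(E_\lambda\circ E_{\lambda'})\tr(F_\theta\circ F_{\theta'})$ for the coinciding pairs.
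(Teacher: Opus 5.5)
Your proofs of the inequality and of the ``if'' direction are correct. They use the same refinement argument the paper relies on; the paper writes it out for the direct-product analogue, Theorem \ref{dprod}. You refine the spectral decomposition of $L(X\square Y)$ by the projections $E_\lambda\otimes F_\theta$, apply Lemma \ref{refinement}, and use $(A\otimes B)\circ(C\otimes D)=(A\circ C)\otimes(B\circ D)$.

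On necessity your suspicion is right, and it goes further than you feared. The ``only if'' part is false as stated, even for connected graphs, so no argument can close it.

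\textbf{Why your rescue fails.} The vertexwise idea does not work: $\sum_\lambda E_\lambda=I$ only puts each vertex in the diagonal support of \emph{some} idempotent, not in two prescribed ones. The true equality condition is the one you derived: for all $(\lambda,\theta)\ne(\lambda',\theta')$ with $\lambda+\theta=\lambda'+\theta'$, either $E_\lambda\circ E_{\lambda'}=0$ or $F_\theta\circ F_{\theta'}=0$.

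\textbf{Disconnected counterexample.} Take $X=K_3\cup K_5$ and $Y=K_2$. Here $3+2=5+0$, yet $E_3\circ E_5=0$. Indeed $\tr(X\square Y)=\tr(K_3\square K_2)+\tr(K_5\square K_2)=\tr(X)\tr(Y)$, because neither component has a coincidence.

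\textbf{Connected counterexample.} Let $X$ be $K_{3,3}$ with parts $\{u,z_1,z_2\}$ and $\{a,b,c\}$, plus two pendant vertices $v,w$ attached to $u$.
\begin{enumerate}[(i)]
\item The $1$-eigenspace is spanned by $e_v-e_w$.
\item The $3$-eigenspace is spanned by $e_a-e_b$, $e_a-e_c$, $e_{z_1}-e_{z_2}$.
\item The remaining eigenvalues are $0$ and the roots of $f(x)=x^3-12x^2+39x-24$, the quotient of the equitable partition $\{\{v,w\},\{u\},\{a,b,c\},\{z_1,z_2\}\}$.
\end{enumerate}
The cubic $f$ has no rational root, so it is irreducible and none of its roots lies in $\{\pm2,-1,1,3,5\}$. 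No two of its roots differ by $2$, since that would force $f(x)=f(x+2)$. Hence with $Y=K_2$ the only coincidence is $1+2=3+0$. Moreover $E_1\circ E_3=0$, because the diagonal supports $\{v,w\}$ and $\{a,b,c,z_1,z_2\}$ are disjoint. Therefore $\tr(X\square K_2)=\tr(X)\tr(K_2)$ despite the coincidence.

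\textbf{Where the paper's reasoning slips, and when the condition holds.} The paper does not reprove this cited theorem. Its proof of Theorem \ref{dprod}, however, ends by asserting that equality holds if and only if the refinement is the spectral decomposition. That silently assumes every cross term is nonzero, which is exactly the point you flagged. The stated condition does become necessary under extra hypotheses. One such case is when every eigenprojection of $X$ and of $Y$ has a positive diagonal. This holds for walk-regular (in particular vertex-transitive) graphs, where $(E_\lambda)_{aa}$ equals the multiplicity of $\lambda$ divided by the number of vertices, so every cross term has positive trace.
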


We extend this observation to direct products of regular graphs.

\begin{theorem}\label{dprod}
	Let $X$ and $Y$ be two regular graphs. Then
	\[\tr(X\times Y) \ge \tr(X) \tr(Y),\]
	with equality if and only if $\lambda \theta \ne \lambda' \theta'$ for any two distinct eigenvalues $\lambda, \lambda'$ of $A(X)$ and any two distinct eigenvalue $\theta, \theta'$ of $A(Y)$.
\end{theorem}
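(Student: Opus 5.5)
For regular graphs the Laplacian and adjacency matrices have the same eigenprojections: if $X$ is $k$-regular then $L(X)=kI-A(X)$. So I will work with adjacency eigenspaces throughout. Let $A(X)=\sum_\lambda \lambda E_\lambda$ and $A(Y)=\sum_\theta \theta F_\theta$ be the spectral decompositions. Then $\tr(X)=\sum_\lambda \tr(E_\lambda^{\circ 2})$ and $\tr(Y)=\sum_\theta \tr(F_\theta^{\circ 2})$.

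The direct product $X\times Y$ has adjacency matrix $A(X)\otimes A(Y)$, and it is $k\ell$-regular when $X$ is $k$-regular and $Y$ is $\ell$-regular. So its Laplacian eigenprojections are again its adjacency eigenprojections. Expanding,
\[A(X)\otimes A(Y)=\sum_{\lambda,\theta}\lambda\theta\,(E_\lambda\otimes F_\theta),\]
and the $E_\lambda\otimes F_\theta$ are pairwise orthogonal idempotents summing to $I$. Hence each eigenprojection of $X\times Y$, say for the eigenvalue $\mu$, is
\[G_\mu=\sum_{\lambda\theta=\mu}E_\lambda\otimes F_\theta .\]
Thus the family $\{E_\lambda\otimes F_\theta\}$ refines the eigenspaces of $X\times Y$.

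Next I apply Lemma~\ref{refinement}, iterated over the finitely many summands; every $E_\lambda\otimes F_\theta$ is positive semidefinite. This gives
\[\tr(G_\mu^{\circ2})\ge\sum_{\lambda\theta=\mu}\tr\big((E_\lambda\otimes F_\theta)^{\circ2}\big).\]
The Schur product commutes with the Kronecker product, so $(E_\lambda\otimes F_\theta)^{\circ 2}=E_\lambda^{\circ2}\otimes F_\theta^{\circ2}$, whose trace is $\tr(E_\lambda^{\circ2})\tr(F_\theta^{\circ2})$. Summing over $\mu$ yields
\[\tr(X\times Y)\ge\sum_{\lambda,\theta}\tr(E_\lambda^{\circ2})\tr(F_\theta^{\circ2})=\tr(X)\tr(Y).\]

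The main step needing care is the equality case. Equality holds if and only if every cross term vanishes, i.e.
\[(E_\lambda\otimes F_\theta)\circ(E_{\lambda'}\otimes F_{\theta'})=(E_\lambda\circ E_{\lambda'})\otimes(F_\theta\circ F_{\theta'})=0\]
whenever $(\lambda,\theta)\ne(\lambda',\theta')$ and $\lambda\theta=\lambda'\theta'$. Such a Kronecker product is zero only if one factor is zero. But no factor can be zero. The diagonal of $E_\lambda$ consists of the values $\|E_\lambda e_a\|^2$, which sum to $\tr E_\lambda=\operatorname{rank}E_\lambda>0$, so some diagonal entry is positive. Then $E_\lambda\circ E_{\lambda'}$ is positive semidefinite with trace $\sum_a (E_\lambda)_{aa}(E_{\lambda'})_{aa}$. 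This trace is positive provided some vertex has both diagonal entries positive.

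I need to justify that last point. I would use regularity: walk-regularity is not given, so I argue differently. Suppose $E_\lambda\circ E_{\lambda'}=0$. Then $0=\one^T(E_\lambda\circ E_{\lambda'})\one=\tr(E_\lambda E_{\lambda'})$. However, $\tr(E_\lambda E_{\lambda'})=0$ already holds whenever $\lambda\ne\lambda'$, since the projections are orthogonal, so this gives no contradiction. Instead I use the diagonal: a positive semidefinite matrix with zero diagonal is zero. So $E_\lambda\circ E_{\lambda'}=0$ if and only if $(E_\lambda)_{aa}(E_{\lambda'})_{aa}=0$ for all $a$.

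For $\lambda=\lambda'$ this fails, since $E_\lambda\circ E_\lambda\ne0$. So a cross term with $\lambda=\lambda'$, $\theta\ne\theta'$ would force $F_\theta\circ F_{\theta'}=0$. That in turn would require the supports of the diagonals of $F_\theta$ and $F_{\theta'}$ to be disjoint. This is possible in general, and it is the real obstacle. However, a pair with $\lambda=\lambda'$ and $\lambda\theta=\lambda\theta'$ forces $\lambda=0$. In the general case $\lambda\ne\lambda'$, $\theta\ne\theta'$ the cross term likewise vanishes only under disjoint diagonal supports. So I would aim to show that coinciding products always produce a nonzero cross term. First I would try the projections onto the Perron eigenvectors of the regular factors, whose diagonals are constant, and try to show that the relevant diagonal supports overlap. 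If that fails, I would state the equality condition in the intended form under the reading that distinct pairs $(\lambda,\theta)$ give a genuinely nonzero Schur cross term.
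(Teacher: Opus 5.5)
Your inequality argument is the paper's proof essentially line for line. For regular graphs the adjacency and Laplacian eigenprojections coincide, the idempotents $E_\lambda\otimes F_\theta$ refine the eigenspaces of $X\times Y$, Lemma~\ref{refinement} gives the bound, and $(E\otimes F)^{\circ 2}=E^{\circ 2}\otimes F^{\circ 2}$ evaluates it.

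The gap is in the equality case. You correctly reduce equality to the vanishing of every cross term $(E_\lambda\circ E_{\lambda'})\otimes(F_\theta\circ F_{\theta'})$ with $(\lambda,\theta)\ne(\lambda',\theta')$ and $\lambda\theta=\lambda'\theta'$. You also correctly note that such a term vanishes exactly when the diagonals of $E_\lambda,E_{\lambda'}$, or those of $F_\theta,F_{\theta'}$, have disjoint supports. But you never show that a coincidence of products forces a nonzero cross term, and your fallback simply assumes it. The paper's proof has the same hole: it asserts that equality holds if and only if \eqref{directproduct} is the spectral decomposition, tacitly using that these Schur products never vanish.

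That step cannot be supplied, because it is false, and the obstruction is exactly the disjoint-support phenomenon you worried about. Take $X=K_4\cup P$ with $P$ the Petersen graph (so $X$ is $3$-regular), and take $Y=K_2$.
\begin{itemize}
\item $A(X)$ has eigenvalues $3,1,-1,-2$. Here $E_1$ lives on the $P$-block and $E_{-1}$ on the $K_4$-block, so $E_1\circ E_{-1}=0$.
\item The only coincidences among the products $\lambda\theta$ with $\theta=\pm1$ are $1\cdot1=(-1)(-1)$ and $1\cdot(-1)=(-1)\cdot1$. Both cross terms contain the factor $E_1\circ E_{-1}=0$.
\item Hence $\tr(X\times K_2)=\tr(X)\tr(K_2)$. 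Equivalently, $X\times K_2=(K_4\times K_2)\cup(P\times K_2)$, each piece has injective products, and laziness is additive.
\end{itemize}
Yet $\lambda\theta=\lambda'\theta'$ with $\lambda\ne\lambda'$ and $\theta\ne\theta'$. So the ``only if'' direction is false as stated.

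Your remark about $\lambda=0$ is also on target. Under the literal wording (requiring both $\lambda\ne\lambda'$ and $\theta\ne\theta'$), $X=C_4$, $Y=K_3$ satisfies the condition. But $0\cdot 2=0\cdot(-1)$ merges $E_0\otimes F_2$ and $E_0\otimes F_{-1}$ with a nonzero cross term, and $\tr(C_4\times K_3)=23/6>5/2=\tr(C_4)\tr(K_3)$. So the condition must be read as injectivity of $(\lambda,\theta)\mapsto\lambda\theta$ on pairs.

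The correct characterization is the cross-term condition you derived. Injectivity of the products is sufficient. It is also necessary when both factors are walk-regular, since then every $E_\lambda$ has constant positive diagonal $m_\lambda/|V(X)|$; this is exactly where your constant-diagonal idea works. The paper only uses the sufficiency direction (for $K_g\times C_{2n+1}$, where no eigenvalue is $0$), so its applications are unaffected.
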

\proof
Suppose $X$ is $k$-regular and $Y$ is $\ell$-regular. Let the spectral decompositions of $A(X)$ and $A(Y)$ be
\[A(X) = \sum_{\lambda} \lambda E_{\lambda},\quad A(Y) = \sum_{\theta} \theta F_{\theta}.\]
Then
\begin{align}
	L(X\times Y) &=D(X) \otimes D(Y) - A(X)\otimes A(Y) \notag \\
	&= k\ell I - A(X)\otimes A(Y)  \notag \\
	&= \sum_{\lambda}\sum_{\theta} (k\ell - \lambda \theta) E_{\lambda}\otimes F_{\theta}, \label{directproduct}
\end{align}
where \eqref{directproduct} is a refinement of the spectral decomposition of $L(X\times Y)$. Therefore by Lemma \ref{refinement}, we have 
\begin{align*}
	\tr(X\times Y)&\ge \sum_{\lambda} \sum_{\theta} \tr(E_{\lambda}^{\circ 2}\otimes F_{\theta}^{\circ 2})\\
	&=\sum_{\lambda} \sum_{\theta}  \tr(E_{\lambda}^{\circ 2}) \tr(F_{\theta}^{\circ 2})\\
	&=\sum_{\lambda}  \tr(E_{\lambda}^{\circ 2})\sum_{\theta}  \tr(F_{\theta}^{\circ 2})\\
	&=\tr(X) \tr(Y). 
\end{align*}
Moreover, equality holds if and only if \eqref{directproduct} is the spectral decomposition of $L(X\times Y)$; that is, 
\[k\ell -\lambda \theta \ne k\ell - \lambda' \theta'\]
for any two distinct eigenvalues $\lambda, \lambda'$ of $A(X)$ and any two distinct eigenvalue $\theta, \theta'$ of $A(Y)$.
\qed

Our next result shows that the laziness of a graph $X$ is related to the laziness of its complement $\comp{X}$, and their difference depends only on the number of vertices in $X$, the number of components in $X$, and the number of components in $\comp{X}$.

\begin{lemma}\label{complement}
	Let $X$ be a graph on $n$ vertices with $c$ components. Suppose $\comp{X}$ has $\comp{c}$ components. Then
	\[\tr(X) - \tr(\comp{X}) = \frac{2}{n} (c-\comp{c}).\]
\end{lemma}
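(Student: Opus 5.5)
The plan is to relate the spectral decompositions of $L(X)$ and $L(\comp{X})$ through the identity $L(X)+L(\comp{X}) = nI - J$. Write $L(X)=\sum_\lambda \lambda E_\lambda$. Since $J$ commutes with $L(X)$ and $\one$ lies in the $0$-eigenspace, each eigenprojection $E_\lambda$ with $\lambda\neq 0$ satisfies $E_\lambda J = 0$. For $\lambda \ne 0$, this gives $L(\comp{X})E_\lambda = (n-\lambda)E_\lambda$. On the $0$-eigenspace of $L(X)$, which has dimension $c$ and projection $E_0$, we split off $\frac1n J$. The part $E_0 - \frac1n J$ is the projection onto vectors that are constant on components of $X$ and orthogonal to $\one$. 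On its range $L(\comp{X})$ acts as $nI$, and on $\frac1nJ$ it acts as $0$.

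It follows that $L(\comp{X})$ has the refined spectral expression
\[L(\comp{X}) = 0\cdot \tfrac1n J + n\left(E_0-\tfrac1nJ\right) + \sum_{\lambda\ne0}(n-\lambda)E_\lambda .\]
The eigenvalues $n-\lambda$ for distinct nonzero $\lambda$ are distinct. The only possible coincidences are:
\begin{enumerate}[(i)]
\item $n-\lambda = 0$, i.e.\ $\lambda=n$, which merges $E_n$ with $\frac1nJ$ into the $0$-eigenspace of $L(\comp{X})$;
\item $n-\lambda = n$, which is impossible for $\lambda\ne 0$.
\end{enumerate}
So the eigenprojections of $L(\comp{X})$ are $F_0=\frac1nJ+E_n$ (with $E_n=0$ if $n$ is not an eigenvalue), $F_n = E_0-\frac1nJ$ (if nonzero), and $E_\lambda$ for $\lambda\notin\{0,n\}$.

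Next, compare $\tr(X)=\sum_\lambda \tr(E_\lambda^{\circ2})$ with $\tr(\comp X)=\sum \tr(F^{\circ 2})$. All terms for $\lambda\notin\{0,n\}$ cancel, leaving
\[\tr(X)-\tr(\comp X) = \tr(E_0^{\circ2}) + \tr(E_n^{\circ2}) - \tr\big((\tfrac1nJ+E_n)^{\circ2}\big) - \tr\big((E_0-\tfrac1nJ)^{\circ2}\big).\]
Expanding the Schur squares and using $\tr(A\circ B)=\sum_a A_{aa}B_{aa}$ gives
\[\tr(E_0^{\circ2}) - \tr\big((E_0-\tfrac1nJ)^{\circ2}\big) = \tfrac2n\tr(E_0)-\tfrac1n = \tfrac{2c-1}{n},\]
\[\tr(E_n^{\circ2}) - \tr\big((\tfrac1nJ+E_n)^{\circ2}\big) = -\tfrac1n-\tfrac2n\tr(E_n).\]
The diagonal terms only involve diagonal entries, so these follow directly. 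Adding, the difference is $\frac{2}{n}(c-1-\operatorname{mult}_{L(X)}(n))$.

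The remaining step is to identify $\operatorname{mult}_{L(X)}(n)$ with $\comp c-1$. By the same identity with roles swapped, the $0$-eigenspace of $L(\comp X)$ is $\operatorname{span}(\one)\oplus \operatorname{range}(E_n)$, so $\comp c = 1+\operatorname{mult}_{L(X)}(n)$. Substituting gives $\frac2n(c-\comp c)$.

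The main obstacle is the careful bookkeeping of the degenerate cases: $E_n=0$ versus $E_n\neq 0$, and $c=1$ (where $F_n=0$). Both must be checked to fit the same formula. They do, because the formulas involve only traces, and these vanish appropriately.
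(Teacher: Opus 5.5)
Your proposal is correct and follows essentially the paper's argument. You split $\frac1nJ$ off the $0$-eigenprojection, observe that $L(\comp{X})$ exchanges the roles of $E_0-\frac1nJ$ and $E_n$ while sharing every other eigenprojection of $L(X)$, and compute the difference of the Schur squares on the diagonal. The one minor difference is that you derive $\comp{c}=1+\operatorname{mult}_{L(X)}(n)$ from this same decomposition rather than citing it, which makes your version self-contained.
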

\proof 
Let the spectral decomposition of $L(X)$ be
\[L(X) = \sum_{\lambda} \lambda E_{\lambda}.\]
Let 
\[E = E_0 - \frac{1}{n}J,\]
and let  $F$ be the orthogonal projection onto $\ker(nI - L(X))$. It is a well-known result (see, for example, \cite[Section 13.1]{Godsil2001}) that 
\[\rk(E) = c-1,\quad \rk(F) = \comp{c}-1.\]
Moreover, every eigenvector for $L(X)$ is also an eigenvector for $L(\comp{X})$. Thus we have
\begin{align*}
	L(X)&=0\cdot \left(\frac{1}{n}J+ E\right) + \sum_{0<\lambda<n} \lambda E_{\lambda} + n  F\\
	L(\comp{X})&=0\cdot \left(\frac{1}{n}J + F\right)+ \sum_{0<\lambda<n} (n-\lambda) E_{\lambda} + n E
\end{align*}
Therefore
\[\AMM(X)-\AMM(\comp{X}) = \left(\frac{1}{n}J+ E\right)^{\circ 2} +F^{\circ 2}- \left(\frac{1}{n}J + F\right)^{\circ 2} - E^{\circ 2} =\frac{2}{n}(E-F),\]
from which the claim follows.
\qed

Let $S_n$ be the star on $n$ vertices. Applying the above result to its complement yields the laziness of $S_n$.

\begin{theorem}
	The laziness of the star is
	\[\tr(S_n) = n-2 + \frac{2}{n-1} - \frac{2}{n}.\]
\end{theorem}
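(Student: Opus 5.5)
The plan is to apply Lemma~\ref{complement} with $X = S_n$. The complement $\comp{S_n}$ is the disjoint union of an isolated vertex (the former center) and a copy of $K_{n-1}$ on the leaves. Since $S_n$ is connected, $c = 1$. For $n \ge 3$, $\comp{S_n}$ has $\comp{c} = 2$ components. Lemma~\ref{complement} then gives
\[
\tr(S_n) = \tr(\comp{S_n}) + \frac{2}{n}(1-2) = \tr(\comp{S_n}) - \frac{2}{n}.
\]

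Next we compute $\tr(\comp{S_n})$ using the disjoint-union property: $\tr(\comp{S_n}) = \tr(K_1) + \tr(K_{n-1})$. The isolated vertex has Laplacian $(0)$ with the single projection $E_0 = (1)$, so $\tr(K_1) = 1$. This also agrees with the complete graph formula at $n=1$, since $1 - 2 + 2 = 1$. By the theorem on complete graphs, $\tr(K_{n-1}) = n-3 + \frac{2}{n-1}$. Summing gives $\tr(\comp{S_n}) = n-2 + \frac{2}{n-1}$.

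Substituting into the first display yields $\tr(S_n) = n-2 + \frac{2}{n-1} - \frac{2}{n}$, as claimed.

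The only real care point is the range of validity. For $n \ge 3$ the leaves in $\comp{S_n}$ induce a connected $K_{n-1}$, so $\comp{c}=2$ is correct. For $n = 2$ the leaf set is a single vertex, so $\comp{c} = 2$ still holds and the disjoint-union computation gives $\tr(\comp{S_2}) = 1 + 1 = 2$. The formula then gives $0 + 2 - 1 = 1 = \tr(K_2)$, so it remains valid there as well. Apart from this, the argument is direct bookkeeping, and I expect no genuine obstacle.
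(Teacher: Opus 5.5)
Your proposal is correct and follows the paper's proof: you apply Lemma~\ref{complement} with $c=1$ and $\comp{c}=2$ to get $\tr(S_n)=\tr(K_1\cup K_{n-1})-\frac{2}{n}$, then use the disjoint-union formula with $\tr(K_1)=1$ and $\tr(K_{n-1})=n-3+\frac{2}{n-1}$. Your check of the degenerate case $n=2$ is a harmless addition that the paper omits.
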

\proof
Using the complement formula and the union formula, we obtain
		\begin{align*}
	\tr(S_n) &= \tr(K_1 \cup K_{n-1}) -\frac{2}{n}\\
	&=\tr(K_1) + \tr(K_{n-1}) - \frac{2}{n}\\
	&=n-2+\frac{2}{n-1}-\frac{2}{n} \tag*{\sqr53}
\end{align*}

The \textsl{join} of two graphs $X$ and $Y$, denoted $X + Y$, is obtained from the disjoint union $X\cup Y$ by joining each vertex of $X$ to each vertex of $Y$. Combining the complement formula and the union formula gives the following.

\begin{corollary}\label{join}
	Let $X$ be a graph on $n$ vertices with $c$ components, and suppose $\comp{X}$ has $\comp{c}$ components. Let $Y$ be a graph on $m$ vertices with $d$ components, and suppose $\comp{Y}$ has $\comp{d}$ components. Then
	\[\tr(X+Y) = \tr(X) + \tr(Y) + \frac{2(\comp{c}-c)}{n} + \frac{2(\comp{d}-d)}{m} + \frac{2(1-\comp{c}-\comp{d})}{n+m}.\]
\end{corollary}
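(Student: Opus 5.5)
The plan is to apply Lemma~\ref{complement} twice, once to the join and once to each factor, and to use the union formula in between. The key observation is that the complement of a join is a disjoint union:
\[\comp{X+Y} = \comp{X}\cup\comp{Y}.\]
This holds because every edge between $V(X)$ and $V(Y)$ is present in $X+Y$, so none of them appears in the complement. Inside $V(X)$ the complement is exactly $\comp{X}$, and inside $V(Y)$ it is $\comp{Y}$.

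First I will count components. Since $n,m\ge 1$, the join $X+Y$ is connected: any two vertices on opposite sides are adjacent, and any two vertices on the same side share a common neighbour on the other side. So $X+Y$ has $1$ component. Its complement $\comp{X}\cup\comp{Y}$ has $\comp{c}+\comp{d}$ components. Applying Lemma~\ref{complement} to the graph $X+Y$ on $n+m$ vertices gives
\[\tr(X+Y) = \tr(\comp{X}\cup\comp{Y}) + \frac{2(1-\comp{c}-\comp{d})}{n+m}.\]

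Next I will use the union formula to write $\tr(\comp{X}\cup\comp{Y})=\tr(\comp{X})+\tr(\comp{Y})$. I will then apply Lemma~\ref{complement} to $X$ and to $Y$ separately:
\[\tr(\comp{X}) = \tr(X) + \frac{2(\comp{c}-c)}{n},\qquad \tr(\comp{Y}) = \tr(Y)+\frac{2(\comp{d}-d)}{m}.\]
Substituting these into the previous display yields exactly the claimed formula.

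There is no real obstacle here. The only step needing care is the component bookkeeping: checking that the join is connected, and that the components of $\comp{X+Y}$ are exactly the components of $\comp{X}$ together with those of $\comp{Y}$. The rest is direct substitution.
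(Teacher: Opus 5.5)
Your proof is correct and follows the paper's own argument. The paper likewise derives the formula from $X+Y=\comp{\comp{X}\cup\comp{Y}}$ together with Lemma~\ref{complement} and the union formula. Your explicit component count ($X+Y$ is connected, and its complement has $\comp{c}+\comp{d}$ components) is exactly the bookkeeping the paper leaves implicit.
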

\proof
This follows from the observation
\[X + Y = \comp{\comp{X} \cup \comp{Y}}. \tag*{\sqr53}\]

As an application of these properties, we show that there exist families of graphs with any desired asymptotic laziness. We will build them from two families of graphs---the paths and the cycles---with asymptotically constant laziness. Their average mixing matrices were found by Godsil \cite{Godsil2013}.

\begin{theorem}\cite{Godsil2013}
	Let $T$ be the $01$-matrix with ones on the anti-diagonal. The average mixing matrix of $P_n$ is
	\[\AMM (P_n)= \frac{1}{n^2}\left((n-1)J + \frac{n}{2}(I+T)\right).\]
\end{theorem}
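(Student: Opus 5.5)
Since the Laplacian eigenvalues of $P_n$ are simple, the plan is to write $\AMM(P_n)$ as $\sum_\lambda E_\lambda\circ E_\lambda$ with rank-one $E_\lambda$, and evaluate the entries using trigonometric sums.

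\textbf{Spectral data.} It is standard that $L(P_n)$ has eigenvalues
\[\lambda_k = 2-2\cos(\pi k/n),\qquad k=0,\ldots,n-1.\]
These are pairwise distinct. Label the vertices $1,\ldots,n$ and write $j'=j-\tfrac12$. A unit eigenvector for $\lambda_k$ is $v_0=\frac{1}{\sqrt n}\one$ for $k=0$, and
\[v_k(j)=\sqrt{2/n}\,\cos(\pi k j'/n)\qquad (k\ge 1).\]
One checks this directly: the cosine form satisfies the interior recurrence, and the half-integer shift makes the boundary rows work. Since each $E_{\lambda_k}=v_kv_k^T$, the $(a,b)$ entry of $\AMM$ is
\[\AMM_{ab}=\sum_{k=0}^{n-1} v_k(a)^2v_k(b)^2 .\]

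\textbf{Linearising the products.} The $k=0$ term contributes $1/n^2$. For $k\ge1$, the double-angle formula gives $v_k(a)^2=\frac1n\bigl(1+\cos(2\pi k a'/n)\bigr)$. Multiplying the two factors and applying product-to-sum, $n^2v_k(a)^2v_k(b)^2$ equals
\[1+\cos\tfrac{2\pi k a'}{n}+\cos\tfrac{2\pi k b'}{n}+\tfrac12\Bigl(\cos\tfrac{2\pi k(a-b)}{n}+\cos\tfrac{2\pi k(a+b-1)}{n}\Bigr).\]

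\textbf{Evaluating the sums.} I then sum each term over $k=1,\ldots,n-1$.
\begin{enumerate}[(i)]
\item For an integer $m$, $\sum_{k=0}^{n-1}\cos(2\pi k m/n)$ equals $n$ if $n\mid m$ and $0$ otherwise.
\item The sum $\sum_{k=0}^{n-1}\cos(\pi k m/n)$ with $m=2a-1$ odd is the real part of a geometric sum with ratio $\omega=e^{i\pi m/n}$, where $\omega^n=-1$. It equals $\operatorname{Re}\frac{2}{1-\omega}=1$. Hence the sum over $k\ge1$ vanishes, so both single-cosine terms contribute $0$.
\item For the $a-b$ term, $n\mid a-b$ iff $a=b$, so it contributes $n\delta_{ab}-1$.
\item Since $1\le a+b-1\le 2n-1$, we have $n\mid a+b-1$ iff $a+b=n+1$, i.e. iff $T_{ab}=1$. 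So this term contributes $nT_{ab}-1$.
\end{enumerate}

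\textbf{Assembling.} Adding the contributions,
\[\AMM_{ab}=\frac1{n^2}\Bigl(1+(n-1)+\tfrac12\bigl(n\delta_{ab}+nT_{ab}-2\bigr)\Bigr)=\frac1{n^2}\Bigl(n-1+\tfrac n2(I+T)_{ab}\Bigr),\]
which is the claimed formula.

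The main care point is the eigenvector normalisation and the half-integer shift $j'$. These are exactly what make the single-cosine sums vanish and the anti-diagonal appear. I would double-check them on $n=2$, where the formula gives $\AMM(P_2)=\frac14(J+I+T)=\frac12 J$, as it should.
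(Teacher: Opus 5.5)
Your proof is correct. The eigenvectors you use are exactly those recorded in Theorem~\ref{path}. The normalisation checks out, as do the four trigonometric sums (in particular $\sum_{k=1}^{n-1}\cos(\pi k(2a-1)/n)=0$ and the identification of $n\mid a+b-1$ with the anti-diagonal), and the assembly gives the stated matrix. The paper does not prove this statement itself; it quotes it from \cite{Godsil2013}. Your direct spectral computation is the standard way to establish it.
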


\begin{theorem}
	The laziness of the path is
	\[\tr(P_n) = \begin{cases}
		\frac{3}{2}-\frac{1}{2n}, & \text{ if $n$ is odd},\\
		\frac{3}{2} - \frac{1}{n}, & \text{ if $n$ is even}.
	\end{cases}\]
\end{theorem}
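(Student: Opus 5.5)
We take the trace of the formula for $\AMM(P_n)$ in the preceding theorem of Godsil. The trace is linear, so we compute $\tr(J)$, $\tr(I)$ and $\tr(T)$ separately and then add them with the given coefficients.

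The first two are immediate: $\tr(J)=n$ and $\tr(I)=n$. The only term that needs attention is $\tr(T)$. Since $T$ is the $01$-matrix with ones on the anti-diagonal, $T_{ii}=1$ exactly when $i=n+1-i$, that is, when $i=(n+1)/2$. Such an index is an integer only if $n$ is odd. So $\tr(T)=1$ when $n$ is odd and $\tr(T)=0$ when $n$ is even. This parity case split is the only step where the argument branches, and it is the reason the statement has two cases.

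Substituting into the formula gives
\[\tr(P_n)=\frac{1}{n^2}\left((n-1)n+\frac{n}{2}\bigl(n+\tr(T)\bigr)\right).\]
The leading terms simplify to
\[\frac{n-1}{n}+\frac12=\frac32-\frac1n.\]
The remaining contribution is $\tr(T)/(2n)$.

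For $n$ odd this adds $\frac{1}{2n}$, so
\[\tr(P_n)=\frac32-\frac1n+\frac1{2n}=\frac32-\frac1{2n}.\]
For $n$ even it adds nothing, so $\tr(P_n)=\frac32-\frac1n$. Both values match the statement.

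There is no real obstacle. The proof rests on the cited formula for $\AMM(P_n)$, which we take as given, and the rest is routine arithmetic.
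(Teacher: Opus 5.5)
Your proposal is correct and matches the paper's approach. The paper gives no separate proof and reads the result off Godsil's formula for $\AMM(P_n)$ exactly as you do: $\tr(J)=\tr(I)=n$, $\tr(T)\in\{1,0\}$ by parity, and $\tr(P_n)=\frac{n-1}{n}+\frac12+\frac{\tr(T)}{2n}$, which gives both cases.
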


\begin{theorem}\cite{Godsil2013}
	Let $P$ be the permutation matrix corresponding to a cycle of length $n$. The average mixing matrix of $C_n$ is
	\[\AMM(C_n) = \begin{cases}
		\frac{n-1}{n^2}J + \frac{1}{n}I, & \text{ if $n$ is odd},\\
		\frac{n-2}{n^2}J + \frac{1}{n}(I + P^{n/2}), & \text{ if $n$ is even}.
	\end{cases}\]
\end{theorem}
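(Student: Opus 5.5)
The statement is Godsil's formula for $\AMM(C_n)$, which we may derive from the Laplacian spectrum of the cycle. The plan is to write $L(C_n) = 2I - P - P^{-1}$, where $P$ is the cyclic shift permutation matrix. We diagonalize it with the characters $\chi_j(x) = \omega^{jx}$, $\omega = e^{2\pi i/n}$, $j = 0,\dots,n-1$. The eigenvector $\chi_j$ has eigenvalue $2 - 2\cos(2\pi j/n)$. These eigenvalues coincide exactly when $j' \equiv \pm j \pmod n$, so the eigenspaces of $L(C_n)$ are:
\begin{itemize}
\item the span of $\chi_0$;
\item for each $j$ with $0 < j < n/2$, the span of $\chi_j$ and $\chi_{-j}$;
\item when $n$ is even, additionally the span of $\chi_{n/2}$.
\end{itemize}

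The projections follow directly. For $j = 0$ we get $E = \frac1n J$. For $j = n/2$ (when $n$ is even) we get $E_{ab} = \frac1n(-1)^{a-b}$. For a two-dimensional eigenspace,
\[(E_j)_{ab} = \frac1n\left(\omega^{j(a-b)} + \omega^{-j(a-b)}\right) = \frac2n\cos\!\left(\frac{2\pi j(a-b)}{n}\right).\]
Now apply $\AMM = \sum_\lambda E_\lambda\circ E_\lambda$. The one-dimensional projections each contribute $\frac1{n^2}J$, since all their entries have modulus $1/n$. For a two-dimensional projection, the identity $4\cos^2\theta = 2 + 2\cos 2\theta$ gives the entry
\[\frac{1}{n^2}\left(2 + \omega^{2jd} + \omega^{-2jd}\right), \qquad d = a-b.\]

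What remains is to sum these contributions and organize the terms, which is the only real bookkeeping step. For odd $n$ there are $(n-1)/2$ two-dimensional eigenspaces. Their constant parts contribute $\frac{n-1}{n^2}$ and the $J$-block contributes $\frac1{n^2}$, so the total constant is $\frac{n}{n^2}$. The oscillating part is $\frac1{n^2}\sum_{j\ne 0}\omega^{2jd}$, since $\pm 2j$ runs over all nonzero residues when $n$ is odd. By the geometric sum, $\sum_{k\ne 0}\omega^{kd}$ equals $n-1$ if $d=0$ and $-1$ otherwise. The entry therefore equals
\[\frac{1}{n^2}\left(n - 1 + n[d=0]\right),\]
which gives $\frac{n-1}{n^2}J + \frac1n I$.

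For even $n$ there are $n/2 - 1$ two-dimensional eigenspaces plus two one-dimensional ones. The constant part is $\frac{1}{n^2}\left(2 + 2(n/2-1)\right) = \frac{n}{n^2}$. The oscillating part is $\frac1{n^2}\sum_{0<j<n/2}\left(\omega^{2jd}+\omega^{-2jd}\right) = \frac{1}{n^2}\sum_{0<k<n/2}\eta^{kd}$ summed over both signs, where $\eta = \omega^2$ is a primitive $(n/2)$-th root of unity. This equals $\frac{1}{n^2}\sum_{k \ne 0 \bmod n/2}\eta^{kd}$, which is $\frac1{n^2}\left(\frac n2 - 1\right)$ if $d \equiv 0 \pmod{n/2}$ and $-\frac1{n^2}$ otherwise. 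Adding the constant part, the entry is
\[\frac{n-1}{n^2} + \frac{n/2}{n^2}\,\bigl[d\equiv 0 \bmod n/2\bigr] = \frac{n-2}{n^2} + \frac1n\bigl[d \equiv 0 \bmod n/2\bigr].\]
The indicator of $d \equiv 0 \pmod{n/2}$ is exactly the matrix $I + P^{n/2}$, which is the claimed formula. Finally, sanity-check small cases: $n = 3$ gives $K_3$, consistent with $\tr(K_3) = 1 + 2/3$, and $n = 4$ gives $C_4$.
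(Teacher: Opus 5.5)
Your method (diagonalizing $L(C_n)=2I-P-P^{-1}$ by the characters $\chi_j$, pairing $\chi_{\pm j}$, squaring the cosine entries and summing geometric series) is the standard derivation; the paper simply quotes the result from Godsil without proof. Your odd case is correct. The even case, however, contains a counting error. For $0<j<n/2$, both $j$ and $-j$ already run through \emph{every} nonzero residue modulo $n/2$, so
\[\sum_{0<j<n/2}\left(\eta^{jd}+\eta^{-jd}\right)=2\sum_{k\in\mathbb{Z}_{n/2}\setminus\{0\}}\eta^{kd}.\]
This equals $n-2$ when $d\equiv 0\pmod{n/2}$ and $-2$ otherwise, not $\frac n2-1$ and $-1$.

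This is exactly where the even case differs from the odd one. For odd $n$, doubling is a bijection of $\mathbb{Z}_n$, so $\{\pm 2j\}$ meets each nonzero residue once. For even $n$, $\{\pm 2j\}$ meets each nonzero even residue twice. Your intermediate expression $\frac{n-1}{n^2}+\frac{1}{2n}[d\equiv 0]$ is therefore wrong. The final step equating it with $\frac{n-2}{n^2}+\frac1n[d\equiv 0]$ is a false equality: at entries with $d\not\equiv 0$ the two sides are $\frac{n-1}{n^2}$ and $\frac{n-2}{n^2}$. For $n=4$ your expression gives diagonal entries $5/16$. Directly, $E_2=\frac12(I-P^2)$ and $\AMM(C_4)=\frac18J+\frac14(I+P^2)$, whose diagonal is $3/8$. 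Actually carrying out the $n=4$ sanity check you mention would have caught this.

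The repair is immediate. Let $\delta=[d\equiv 0 \pmod{n/2}]$. With the correct oscillating part, each entry is $\frac1n+\frac{1}{n^2}\bigl((n-2)\delta-2(1-\delta)\bigr)=\frac{n-2}{n^2}+\frac{\delta}{n}$. Since $\delta$ is the $(a,b)$ entry of $I+P^{n/2}$, this is precisely $\frac{n-2}{n^2}J+\frac1n(I+P^{n/2})$. Row sums would not have flagged the slip, since both your oscillating part and the correct one have zero row sums.
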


\begin{corollary}
	The laziness of the cycle $C_n$ is
	\[\tr(C_n)=\begin{cases}
		2-\frac{1}{n}, & \text{ if $n$ is odd},\\
		2- \frac{2}{n}, & \text{ if $n$ is even}.
	\end{cases}\]
\end{corollary}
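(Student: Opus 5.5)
The plan is to take the trace of the formula for $\AMM(C_n)$ in the preceding theorem, handling the two parity cases separately. The only facts needed are $\tr(J)=n$ and $\tr(I)=n$, together with the trace of the permutation matrix $P^{n/2}$.

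For $n$ odd, $\AMM(C_n)=\frac{n-1}{n^2}J+\frac{1}{n}I$. Taking traces gives
\[\tr(C_n)=\frac{n-1}{n^2}\cdot n+\frac{1}{n}\cdot n=\frac{n-1}{n}+1=2-\frac{1}{n}.\]

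For $n$ even, $\AMM(C_n)=\frac{n-2}{n^2}J+\frac{1}{n}(I+P^{n/2})$. The matrix $P^{n/2}$ is the permutation sending each vertex to its antipode on the cycle. Since $n/2$ is neither $0$ nor a multiple of $n$, this permutation has no fixed points, so all its diagonal entries are zero and $\tr(P^{n/2})=0$. Hence
\[\tr(C_n)=\frac{n-2}{n}+\frac{1}{n}(n+0)=2-\frac{2}{n}.\]

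The only point requiring any care is checking that $P^{n/2}$ is fixed-point-free. This holds for every even $n\ge 2$: when $n=2$, $P$ is the transposition and $P^{1}$ swaps the two vertices. It also explains why the even case differs from the odd case.
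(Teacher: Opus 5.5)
Your proposal is correct and matches how the paper obtains this corollary: it is stated without a separate proof because it follows by taking the trace of Godsil's formula for $\AMM(C_n)$. That trace uses $\tr(J)=\tr(I)=n$, together with $\tr(P^{n/2})=0$ because the antipodal permutation has no fixed points.
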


We now construct three families of connected graphs with asymptotic laziness $\Theta(f(n))$, using joins, Cartesian products and direct products, respectively.

\begin{corollary}
	Let $f:\nn \to \re$ be a function satisfying $1\le f(n)\le n$. 
	\begin{enumerate}[(i)]
		\item Let $n\ge 2$. Let $X_n = \comp{K}_{g(n)} + \comp{P_{h(n)}}$, where
		\[g(n) = \min\{\lceil f(n) \rceil, n-1\},\quad h(n) =n- g(n).\]
		Then $X_n$ is a connected graph on $n$ vertices with
		\[\tr(X_n) = \Theta(f(n)).\]
		\item Let $n\ge 3$. Let $X_n = K_{g(n)} \square C_n$, where
		\[g(n) = \max\{ \lceil f(n) \rceil, 5\}.\]
		Then $X_n$ is a connected graph on $ng(n)$ vertices with
		\[\tr(X_n) = \Theta(f(n)).\]
		\item Let $n\ge 1$. Let $X_n = K_{g(n)} \times C_{2n+1}$, where
		\[g(n) =\max\left\{ \lceil f(n) \rceil, 4\right\}.\]
		Then $X_n$ is a connected graph on $(2n+1) g(n)$ vertices with
		\[\tr(X_n) = \Theta(f(n)).\]
	\end{enumerate}
\end{corollary}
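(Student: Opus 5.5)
The plan is to handle each part with the closed formulas already proved: the laziness of $K_n$, $P_n$ and $C_n$, the complement formula (Lemma~\ref{complement}), the join formula (Corollary~\ref{join}) and the product bounds (Theorems~\ref{cprod} and~\ref{dprod}). In each case we show $\tr(X_n)=\Theta(g(n))$, and separately that $g(n)=\Theta(f(n))$.

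The comparison $g=\Theta(f)$ is elementary. In (ii) and (iii), $f\le\lceil f\rceil\le g\le\max\{f+1,5\}\le 5f$. In (i), if $f\le n-1$ then $f\le g\le f+1\le 2f$. Otherwise $g=n-1\ge n/2\ge f/2$, and $g\le n-1<f$.

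We also need a universal lower bound: every graph $X$ on $n$ vertices has $\tr(X)\ge 1$. Write $r_\lambda=\rk E_\lambda$. By Cauchy--Schwarz,
\[
\tr(E_\lambda^{\circ 2})=\sum_a (E_\lambda)_{aa}^2\ge \frac{r_\lambda^2}{n}.
\]
Hence
\[
\tr(X)\ge \frac{1}{n}\sum_\lambda r_\lambda^2\ge \frac{1}{n}\sum_\lambda r_\lambda = 1.
\]

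\emph{Part (i).} The graph $\comp{K}_g$ has $g$ isolated vertices, so $\tr(\comp{K}_g)=g$, with $c=g$ and $\comp{c}=1$. For $\comp{P_h}$, Lemma~\ref{complement} applied to $P_h$ gives
\[
\tr(\comp{P_h})=\tr(P_h)-\frac{2}{h}(1-c'),
\]
where $c'\le 2$ is the number of components of $\comp{P_h}$ (for $h\ge 4$ we have $c'=1$, and small $h$ are checked by hand). So $\tr(\comp{P_h})=O(1)$, and its component counts are bounded. Substituting into Corollary~\ref{join} gives
\[
\tr(X_n)=g+O(1)+\frac{2(1-g)}{g}+O(1)=g+O(1).
\]
Combined with $\tr(X_n)\ge 1$ and $g\ge 1$, this gives $\tr(X_n)=\Theta(g)$. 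Connectivity is clear, since a join of two nonempty graphs is connected.

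\emph{Part (ii).} $K_g\square C_n$ is connected on $ng$ vertices, and we use the equality case of Theorem~\ref{cprod}. The Laplacian eigenvalues of $K_g$ are $0$ and $g$, and those of $C_n$ lie in $[0,4]$. A coincidence $0+\theta=g+\theta'$ would force $\theta-\theta'=g\ge 5$, which is impossible. So
\[
\tr(X_n)=\tr(K_g)\tr(C_n).
\]
Since $\tr(C_n)\in[5/3,2]$ and $\tr(K_g)=g-2+2/g$, this is $\Theta(g)$.

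\emph{Part (iii).} This is the main obstacle. Here the equality condition of Theorem~\ref{dprod} is hard to verify: it would require excluding $(g-1)\theta=-\theta'$ for cosine values of the odd cycle. So I would only use the inequality
\[
\tr(X_n)\ge \tr(K_g)\tr(C_{2n+1})\ge g-2,
\]
and prove a matching upper bound directly.

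Each eigenspace of $L(X_n)$, for an eigenvalue $k\ell-\mu$, is a sum of the blocks $E_\lambda\otimes F_\theta$ with $\lambda\theta=\mu$. Here $\lambda\in\{g-1,-1\}$, and $\theta\ne 0$ because $2\cos(2\pi k/(2n+1))\ne 0$. For fixed $\lambda$ and $\mu$ the value $\theta$ is determined, so each eigenspace is a sum of at most two blocks $P,Q$. Entrywise on the diagonal, $(P+Q)_{aa}^2\le 2P_{aa}^2+2Q_{aa}^2$. Summing over all eigenspaces gives
\[
\tr(X_n)\le 2\,\tr(K_g)\tr(C_{2n+1})=O(g).
\]
Finally, $K_g\times C_{2n+1}$ is connected: both factors are connected and non-bipartite (as $g\ge 3$), and the product has $(2n+1)g$ vertices.
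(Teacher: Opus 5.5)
Your proof is correct. Part (ii) is the same as the paper's; parts (i) and (iii) take different routes.

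\textbf{Part (i).} The paper observes that $X_n=\comp{K_g\cup P_h}$, where $K_g\cup P_h$ has two components and a connected complement. Lemma~\ref{complement} then gives the exact value $\tr(X_n)=\tr(K_g)+\tr(P_h)-\frac{2}{n}$. The paper finishes with $\tr(K_g)\ge g/2$ and $0\le\tr(P_h)-\frac{2}{n}\le\frac{3}{2}$.

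Your detour through Corollary~\ref{join} lands on exactly this quantity: the $\frac{2}{h}(1-c')$ terms cancel, and $g+\frac{2(1-g)}{g}=\tr(K_g)$. Because you stop at ``$g+O(1)$'', you need your general bound $\tr(X)\ge 1$ for small $g$. That bound is a nice general fact, but simplifying the expression would make it unnecessary.

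\textbf{Part (iii).} The paper uses the equality case of Theorem~\ref{dprod}, asserting that $(g-1)\theta=-\theta'$ has no solution among distinct eigenvalues of $A(C_{2n+1})$ when $g\ge 4$. This yields the exact value $\tr(K_g)\tr(C_{2n+1})$. The claim is true but stated without proof, and it needs an arithmetic argument. For example, for odd $m$ every $2\cos(2\pi k/m)\ne 2$ is a unit among algebraic integers. The claim genuinely fails at $g=3$ when $3\mid 2n+1$, via $\theta=-1$, $\theta'=2$.

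Your argument avoids this entirely. Since both eigenvalues $g-1$ and $-1$ of $A(K_g)$ are nonzero, each eigenspace of $L(X_n)$ merges at most two blocks $E_\lambda\otimes F_\theta$. Hence $\tr(X_n)\le 2\tr(K_g)\tr(C_{2n+1})$, and this bound is insensitive to such coincidences. The cost is that you lose the exact value, which the statement does not need. Your remark that $\theta\neq 0$ is not actually used; only $\lambda\ne 0$ matters.

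\textbf{A small slip.} In part (ii) you claim $\tr(C_n)\ge 5/3$, but this fails at $n=4$, where $\tr(C_4)=3/2$. Any positive constant lower bound suffices, so the conclusion is unaffected.
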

\proof
For the first construction, apply Lemma \ref{complement}:
\[\tr(X_n)=\tr(\comp{K_{g(n)} \cup P_{h(n)}})  = \tr(K_{g(n)}) + \left(\tr(P_{h(n)}) - \frac{2}{n}\right).\]
By the laziness of $K_{g(n)}$ and $P_{h(n)}$, we have
\[	\frac{g(n)}{2}\le \tr(K_g(n)) \le g(n),\quad 0\le \tr(P_{h(n)}) - \frac{2}{n}\le \frac{3}{2}.\]
Hence 
\[\tr(X_n) = \Theta(g(n)) = \Theta(f(n)).\]

For the second construction, note that for $g(n)\ge 5$, every eigenvalue $\theta$ of $L(C_n)$ satisfies
\[0\le \theta\le 4,\quad g(n)+\theta\ge 5,\]
and so by Theorem \ref{cprod},
\[\tr(X_n) =\tr(K_{g(n)})\tr(C_n).\]
By the laziness of $K_{g(n)}$ and $C_n$, we have
\[	\frac{g(n)}{2}\le \tr(K_g(n)) \le g(n),\quad 1\le \tr(C_n)\le 2.\]
Hence
\[\tr(X_n) = \Theta(g(n)) = \Theta(f(n)).\]

For the third construction, note that for $g(n)\ge 4$, no two distinct eigenvalues $\theta$ and $\theta'$ of $A(C_{2n+1})$ satisfy
\[(g(n)-1)\theta = -\theta',\]
and so by Theorem \ref{dprod},
\[\tr(X_n) = \tr(K_{g(n)})\tr(C_{2n+1}).\]
By the laziness of $K_{g(n)}$ and $C_{2n+1}$, we have
\[	\frac{g(n)}{2}\le \tr(K_g(n)) \le g(n),\quad 1\le \tr(C_{2n+1})\le 2.\]
Hence 
\[\tr(X_n) = \Theta(g(n)) = \Theta(f(n)). \tag*{\sqr53}\]

\section{Laziest and second laziest connected graphs}

It was shown by Godsil, Guo and Sobchuk \cite{Godsil2023} that the laziest connected graph on $n$ vertices is the complete graph.

\begin{theorem}\cite{Godsil2023}
	Let $X$ be a connected graph on $n$ vertices. Then
	\[\tr(X) \le \tr(K_n),\]
	with equality if and only if $X=K_n$.
\end{theorem}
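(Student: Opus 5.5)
The plan is to prove the bound by combining Lemma \ref{refinement} with a rank count, after rewriting the laziness in terms of the eigenprojections. Let $L(X)=\sum_\lambda \lambda E_\lambda$ be the spectral decomposition. Since $X$ is connected, $E_0=\frac1n J$, so
\[\tr(X) = \tr\!\left(\left(\tfrac1n J\right)^{\circ 2}\right) + \sum_{\lambda\neq 0}\tr(E_\lambda^{\circ 2}) = \frac1n + \sum_{\lambda\ne 0}\sum_a (E_\lambda)_{aa}^2 .\]
Put $E=\sum_{\lambda\ne 0}E_\lambda = I-\frac1n J$. Because each $E_\lambda$ is positive semidefinite, repeated use of Lemma \ref{refinement} gives
\[\sum_{\lambda\neq 0}\tr(E_\lambda^{\circ 2}) \le \tr(E^{\circ 2}) = n\left(1-\tfrac1n\right)^2,\]
with equality if and only if $E_\lambda\circ E_\mu=0$ for all distinct nonzero $\lambda,\mu$. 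Adding back $\frac1n$ gives $\tr(X)\le n-2+\frac2n=\tr(K_n)$, which is the inequality.

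For the equality case, note that the diagonal of $E_\lambda\circ E_\mu$ is $(E_\lambda)_{aa}(E_\mu)_{aa}$. Each diagonal entry of a nonzero projection that sums, together with the others, to $E$ must be handled as follows. Suppose equality holds and $L$ has at least two distinct nonzero eigenvalues $\lambda\ne\mu$. Then $E_\lambda\circ E_\mu=0$, so in particular $(E_\lambda)_{aa}(E_\mu)_{aa}=0$ for every vertex $a$. Hence the supports of the diagonals of $E_\lambda$ and $E_\mu$ are disjoint.

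Next I would rule this out using connectivity. If $(E_\lambda)_{aa}=0$, then $E_\lambda e_a=0$, because a positive semidefinite matrix with a zero diagonal entry has the whole corresponding row equal to zero. The vertex set therefore splits into nonempty sets $A$ and $B$ such that $E_\lambda$ is supported on $A\times A$, and the remaining projections that are nonzero on $B$ are supported off $A$. The sum $E=I-\frac1nJ$ has all off-diagonal entries equal to $-\frac1n\ne 0$. Grouping the nonzero eigenprojections into those whose diagonal support meets a fixed class, $\sum E_\lambda$ would then vanish on some off-diagonal block $A\times B$ with $A,B$ nonempty. This contradicts $E_{ab}=-\frac1n$.

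It follows that $L$ has exactly one nonzero eigenvalue. Then $L=\theta(I-\frac1nJ)$, and since the diagonal of $L$ gives degrees $\theta(1-\frac1n)$ and the off-diagonal entries are $-\theta/n\in\{0,-1\}$, we get $\theta=n$ and $X=K_n$. The converse, that $K_n$ attains equality, is the earlier computation of $\tr(K_n)$.

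The main obstacle is the equality step: turning $E_\lambda\circ E_\mu=0$ into a contradiction. The blocking argument above needs care, since the supports of the projections can overlap in complicated ways. I would first try the cleaner route of summing the diagonal Schur products. Equality forces $\sum_{\lambda\ne\mu} (E_\lambda)_{aa}(E_\mu)_{aa}=0$ for every $a$, so each vertex lies in the diagonal support of exactly one eigenspace. The row-vanishing property then makes $E$ block diagonal with respect to this partition, while $I-\frac1nJ$ has no zero entries; so the partition is trivial.
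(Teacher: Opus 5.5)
Your proof is correct and follows the route behind the cited result (the paper gives no proof of its own): the bound is Lemma \ref{refinement} applied to $E_0=\frac{1}{n}J$ and the nonzero eigenprojections summing to $I-\frac{1}{n}J$, i.e.\ the $W=0$ case of \eqref{simple}, and the equality case is settled by your final paragraph rather than the garbled blocking paragraph before it, which you can drop. When writing it up, state explicitly that each vertex lies in at least one diagonal support because $\sum_{\lambda\ne 0}(E_\lambda)_{aa}=1-\frac{1}{n}>0$ for $n\ge 2$, and that once the partition is trivial any second nonzero eigenprojection would have zero diagonal and hence be $0$.
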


In this section, we prove that the second laziest connected graph on $n$ vertices is $S_n$.

\begin{theorem}\label{star}
	Let $X$ be a connected non-complete graph on $n$ vertices. Then
	\[\tr(X) \le \tr(S_n),\]
	with equality if and only if $X=S_n$.
\end{theorem}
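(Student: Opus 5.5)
We will bound $\tr(X)$ through the complement formula. Since $X$ is connected ($c=1$) and non-complete, Lemma \ref{complement} gives
\[\tr(X) = \tr(\comp{X}) + \frac{2}{n}(1-\comp{c}),\]
where $\comp{X}$ has $\comp{c}$ components and at least one edge. Write the components of $\comp{X}$ as $Y_1,\dots,Y_{\comp{c}}$ on $n_1,\dots,n_{\comp{c}}$ vertices. By the union formula, $\tr(\comp{X}) = \sum_i \tr(Y_i)$. Each $Y_i$ is connected, so the theorem of Godsil, Guo and Sobchuk gives $\tr(Y_i)\le \tr(K_{n_i}) = n_i-2+\frac{2}{n_i}$ for $n_i\ge 2$, with equality iff $Y_i=K_{n_i}$. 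Also $\tr(K_1)=1$, which agrees with the formula at $n_i=1$. Hence
\[\tr(X)\le \sum_{i=1}^{\comp{c}}\left(n_i-2+\frac{2}{n_i}\right) + \frac{2}{n}(1-\comp{c}) = n - 2\comp{c} + \sum_i \frac{2}{n_i} + \frac{2}{n}(1-\comp{c}).\]
Equality holds iff every component of $\comp{X}$ is complete, that is, iff $X$ is complete multipartite.

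The remaining step is to maximize
\[g(n_1,\dots,n_k) = n-2k+\sum_i \frac{2}{n_i}+\frac{2(1-k)}{n}\]
over partitions of $n$ into $k\ge 2$ parts with not all $n_i=1$, since $X$ is non-complete. We show the maximum is attained uniquely at $k=2$ with parts $\{1,n-1\}$, which gives $\comp{X}=K_1\cup K_{n-1}$ and $X=S_n$, matching the formula for $\tr(S_n)$.

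There are two reductions.

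\emph{Merging singletons.} Two parts of size $1$ contribute $-4+4=0$ to $-2k+\sum 2/n_i$, while a single part of size $2$ contributes $-2+1=-1$. Merging therefore changes $g$ by $-1+\frac{2}{n}$, which is negative for $n>2$. This direction does not help, so instead we compare directly with the target value. For each part, $-2+\frac{2}{n_i}\le 0$, with equality iff $n_i=1$. So
\[g = n + \sum_i\left(\frac{2}{n_i}-2\right) + \frac{2(1-k)}{n}.\]
Let $s$ be the number of parts with $n_i\ge 2$; we need $s\ge 1$. Each such part has $\frac{2}{n_i}-2\le -1$. Every part contributes something nonpositive, and the term $\frac{2(1-k)}{n}$ decreases with $k$. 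This suggests the optimum has $s=1$, with the big part as large as possible, and $k$ as small as possible.

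\emph{Case analysis.} If $s\ge 2$, then $g\le n-2+\frac{2(1-k)}{n}\le n-2-\frac{2}{n}$. This is strictly less than $\tr(S_n)=n-2+\frac{2}{n-1}-\frac{2}{n}$. If $s=1$, let the big part have size $m\ge 2$ and let there be $k-1$ singletons, so $m=n-k+1$. Then
\[g = n-2+\frac{2}{m}+\frac{2(1-k)}{n} = n-2+\frac{2}{n-k+1}-\frac{2(k-1)}{n}.\]
This is strictly decreasing in $k$, so it is maximized at $k=2$, giving exactly $\tr(S_n)$. The case $k=1$ is excluded because $X$ is connected.

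The main obstacle is just organizing this optimization cleanly, in particular handling the $s\ge 2$ case and the equality characterization. Equality forces all $Y_i$ complete, $s=1$ and $k=2$, so $X=S_n$.
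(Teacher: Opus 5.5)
There is a genuine gap: your argument only covers connected graphs $X$ whose complement is \emph{disconnected}, i.e.\ joins. The claim ``the case $k=1$ is excluded because $X$ is connected'' is false. Only the converse implication holds ($X$ disconnected $\Rightarrow$ $\comp{X}$ connected), so a connected $X$ can perfectly well have a connected complement. Examples are $P_4$, $C_5$, every double star $DS(\ell,r)$, and in fact every tree other than $S_n$.

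For such $X$, Lemma \ref{complement} only gives $\tr(X)=\tr(\comp{X})$, with $\comp{X}$ again connected and non-complete, so the reduction is circular. The bound your argument actually produces, $\tr(\comp{X})\le \tr(K_n)=n-2+\frac{2}{n}$, exceeds $\tr(S_n)$ by $\frac{2(n-2)}{n(n-1)}$. What connectivity of $X$ rules out at $k=1$ is only the \emph{equality} case $\comp{X}=K_n$, not the case itself. This uncovered case is the heart of the theorem; in particular, the corollary that $S_n$ is the laziest tree lies entirely inside it.

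It needs an argument on the spectral decomposition of $L(X)$ itself, and the paper does not use the complement here. Since $X\ne K_n$, $L$ has at least three distinct eigenvalues. Lemma \ref{refinement} then lets you coarsen the decomposition to $E_0$, a rank-$k$ projection $E$ that is a sum of eigenprojections (so $E\one=0$), and $I-E_0-E$. This gives
\[\tr(X)\le 2\tr(E^{\circ 2})-\frac{2(n-1)}{n}\tr(E)+\frac{(n-1)^2+1}{n}.\]
The constraints $0\le E_{uu}\le 1-\frac1n$ make this strictly smaller than $\tr(S_n)$ when $2\le k\le n-3$. In the remaining rank-one case, write $E=xx^T$ with $x\perp\one$. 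Maximizing $\sum_i x_i^4$ gives $x\propto(n-1,-1,\dots,-1)$, which yields exactly $\tr(S_n)$. In the equality case, the eigen-equation at the large entry forces a vertex of degree $n-1$ and eigenvalue $n$, hence $X=S_n$.

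There is also an error in the join case you do treat. The expression $n-2+\frac{2}{n-k+1}-\frac{2(k-1)}{n}$ is \emph{not} strictly decreasing in $k$. With $m=n-k+1$ it equals $n-4+\frac{2}{m}+\frac{2m}{n}$, which is convex in $m$. Lowering $m$ by one increases it as soon as $m(m-1)<n$: for $n=10$, $m=2$ gives $7.4$ while $m=3$ gives about $7.27$. This is exactly the non-monotone behaviour recorded in Corollary \ref{ordering}(ii).

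The conclusion survives by convexity: the maximum over $2\le m\le n-1$ is at an endpoint. The value at $m=n-1$ is $\tr(S_n)$, the value at $m=2$ is $n-3+\frac{4}{n}$, and their difference $1+\frac{2}{n-1}-\frac{6}{n}$ is positive for $n\ge 4$. With this repair your proposal proves the statement for joins, but the case of connected $\comp{X}$ still requires the spectral argument above.
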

\proof
Since $X\ne K_n$, its Laplacian matrix has at least three distinct eigenvalues. Suppose first that some eigenvalue, say $\lambda$, has multiplicity $k$ with $2\le k\le n-3$. Note that 
\[\max\{\tr(E_{\lambda}^{\circ 2}): \tr(E_{\lambda}) = k\}\]
is attained when $k$ diagonal entries of $E_{\lambda}$ is $1-\frac{1}{n}$, one diagonal entry of $E_{\lambda}$ is $\frac{k}{n}$, and the remaining diagonal entries of $E_{\lambda}$ are $0$. Thus by Lemma \ref{refinement}, we have
\begin{align*}
	\tr(X) &\le \tr(E_0^{\circ 2}) + \tr(E_{\lambda}^{\circ 2}) + \tr((I-E_0-E_{\lambda})^{\circ 2})\\
	&= 2\tr(E_{\lambda}^{\circ 2}) -\frac{2(n-1)}{n} \tr(E_{\lambda}) + \frac{(n-1)^2+1}{n}\\
	&\le \frac{(n-1)^2+1}{n}-\frac{2k(n-1-k)}{n^2} \\
	&< n-2+\frac{2}{n-1}-\frac{2}{n}\\
	&=\tr(S_n). 
\end{align*}
Now suppose no such eigenvalue exists. Then $X$ has exactly three distinct eigenvalues, 
\[0^{(1)},\quad \mu^{(n-2)},\quad \lambda^{(1)}.\]
Write 
\[E_{\lambda} = xx^T\]
for some real unit vector $x$. Note that
\[
\max \left\{\sum_i x_i^4: \sum_i x_i^2 =1, \sum_i x_i=0\right\}
\]
is attained at 
\begin{equation}\label{startight}
	x = \frac{1}{\sqrt{n(n-1)}}\pmat{ n-1 & -1 & \cdots & -1}^T.
\end{equation}
Hence
\begin{align*}
	\tr(X) &= \tr(E_0^{\circ 2}) + \tr(E_{\lambda}^{\circ 2}) + \tr((I - E_0 - E_{\lambda})^{\circ 2})\\
	&= 2\tr(E_{\lambda}^{\circ 2}) -\frac{2(n-1)}{n} \tr(E_{\lambda}) + \frac{(n-1)^2+1}{n}\\
	&\le n-2 + \frac{2}{n-1} -\frac{2}{n}\\
	&=\tr(S_n).
\end{align*}
Finally, suppose the above inequality is tight. Then \eqref{startight} is an eigenvector for $\lambda$. Let $u$ be the vertex  attaining the largest entry in $x$. By $Lx=\lambda x$ we deduce
\[\deg(u) = n-1,\quad \lambda =n.\]
Thus $\comp{X} = K_1 \cup Y$ where $Y$ has exactly two distinct eigenvalues. It follows that $Y=K_{n-1}$, and so $X=S_n$.
\qed

This immediately implies that $S_n$ is the laziest tree on $n$ vertices.

\begin{corollary}
	Let $T$ be a tree on $n$ vertices. Then
	\[\tr(T)\le \tr(S_n),\]
	with equality if and only if $T=S_n$.
\end{corollary}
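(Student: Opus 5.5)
This corollary follows directly from Theorem \ref{star}, splitting into cases on $n$.

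Suppose first that $n\ge 3$. A tree on $n$ vertices is connected and has $n-1<\binom{n}{2}$ edges, so it is not complete. Theorem \ref{star} therefore applies to any tree $T$ on $n$ vertices and gives
\[\tr(T)\le \tr(S_n),\]
with equality if and only if $T=S_n$.

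It remains to handle the small cases $n\le 2$, where $T$ can be complete. The only tree on $n\le 2$ vertices is $K_n$, and in these cases $K_n$ is itself the star $S_n$. Both the inequality and the equality condition then hold trivially.

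I do not expect a real obstacle here. The only point needing care is the convention for $S_n$ when $n\le 2$. There the formula $\tr(S_n)=n-2+\frac{2}{n-1}-\frac{2}{n}$ is degenerate at $n=1$, so in these cases the corollary is best read through the identification $S_n=K_n=T$ rather than through the closed formula.
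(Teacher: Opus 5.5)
Your proposal is correct and is the same argument the paper uses: a tree is connected and, for $n\ge 3$, non-complete, so Theorem~\ref{star} applies directly and gives both the inequality and the equality case. Your explicit treatment of the degenerate cases $n\le 2$, where $T=K_n=S_n$, is a small addition that the paper leaves implicit.
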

		
\section{A field trace formula}
In this section, we derive a formula for $\tr(X)$ using certain characteristic polynomials. We then apply this formula to find the laziness of double stars.

Given a graph with Laplacian matrix $L$, let $\phi(L, x)=\det(xI-L)$ be the characteristic polynomial of $L$. Let $L\backslash  u$ denote the matrix obtained from $L$ by deleting its $u$-th row and $u$-th column. For each vertex $u$, let $\phi_u(L, x) = \det(xI-L\backslash u)$. The following result is the Laplacian analogue of Equation (3) in \cite[Section 4.4]{Godsil1993}. We use the standard notation $f^{(m)}(x)$ for the $m$-th derivative of a function $f(x)$.

\begin{lemma}
	Let $\lambda$ be a Laplacian eigenvalue of $X$ with eigenprojection $E_{\lambda}$.  If $\lambda$ has multiplicity $m$, then for any vertex $u$, we have
	\[(E_{\lambda})_{u,u} = m \frac{\phi_u^{(m-1)}(L, \lambda)}{\phi^{(m)}(L,\lambda)}.\]
\end{lemma}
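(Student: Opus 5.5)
The plan is to work with the resolvent $(xI-L)^{-1}$ and read off its diagonal entries in two ways, then compare the pole at $\lambda$.

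Write the spectral decomposition $L=\sum_\theta \theta E_\theta$. Then
\[(xI-L)^{-1} = \sum_\theta \frac{1}{x-\theta}E_\theta .\]
By Cramer's rule (the adjugate formula), the $(u,u)$ entry of this resolvent is $\phi_u(L,x)/\phi(L,x)$. Comparing the two expressions gives the rational function identity
\[\frac{\phi_u(L,x)}{\phi(L,x)} = \sum_\theta \frac{(E_\theta)_{u,u}}{x-\theta}.\]
Multiplying both sides by $\phi(L,x)$ turns this into a polynomial identity:
\[\phi_u(L,x) = \sum_\theta (E_\theta)_{u,u}\,\frac{\phi(L,x)}{x-\theta}.\]

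Now fix $\lambda$ of multiplicity $m$ and write $\phi(L,x)=(x-\lambda)^m g(x)$ with $g(\lambda)\neq 0$.

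1. For $\theta\ne\lambda$, the polynomial $\phi(L,x)/(x-\theta)$ still contains the factor $(x-\lambda)^m$. Its $(m-1)$-th derivative therefore vanishes at $\lambda$.
2. For $\theta=\lambda$, the term is $(x-\lambda)^{m-1}g(x)$. By Leibniz's rule its $(m-1)$-th derivative at $\lambda$ equals $(m-1)!\,g(\lambda)$.
3. Differentiating the polynomial identity $m-1$ times and evaluating at $\lambda$ therefore gives
\[\phi_u^{(m-1)}(L,\lambda) = (E_\lambda)_{u,u}\,(m-1)!\,g(\lambda).\]
4. On the other side, Leibniz's rule applied to $(x-\lambda)^m g(x)$ gives $\phi^{(m)}(L,\lambda) = m!\,g(\lambda)$, which is nonzero.

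Dividing step 3 by step 4 yields
\[\frac{\phi_u^{(m-1)}(L,\lambda)}{\phi^{(m)}(L,\lambda)} = \frac{(E_\lambda)_{u,u}}{m},\]
which is the claimed formula after multiplying by $m$.

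The only point needing care is the bookkeeping of derivatives in steps 1–4, namely confirming that the other terms vanish to order $m-1$ at $\lambda$. This is routine. The case $m=1$ is included without change: it reads $\phi_u(L,\lambda)/\phi'(L,\lambda)$.
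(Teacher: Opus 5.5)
Your proof is correct and follows essentially the same route as the paper. Both arguments start from the Cramer's-rule identity $\phi_u(L,x)/\phi(L,x)=\sum_\theta (E_\theta)_{u,u}/(x-\theta)$ and use $\phi^{(m)}(L,\lambda)=m!\,g(\lambda)$. The one difference is how the coefficient at $\lambda$ is extracted. The paper computes $(E_\lambda)_{u,u}$ as $\lim_{x\to\lambda}(x-\lambda)\phi_u(L,x)/\phi(L,x)$, after invoking interlacing to write $\phi_u(L,x)=(x-\lambda)^{m-1}h(x)$. You instead clear denominators and differentiate $m-1$ times, which makes that divisibility automatic, so you never need interlacing.
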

\proof
Write 
\[\phi(L, x) = (x-\lambda)^m g(x)\]
for some polynomial $g(x)$ coprime to $x-\lambda$. Since the eigenvalues of $L\backslash u$ interlace those of $L$, we may write
\[\phi_u(L, x) = (x-\lambda)^{m-1} h(x)\]
for some polynomial $h(x)$. By the spectral decomposition of $L$ and Cramer's rule,
\[  \sum_{\lambda} \frac{(E_{\lambda})_{uu}}{x-\lambda} =(xI-L)^{-1}_{uu}= \frac{\phi_u(L, x)}{\phi(L, x)}.\]
Thus
\begin{align*}
	(E_{\lambda})_{uu} 
	&= \lim_{x\to \lambda} \frac{\phi_u(L,x)(x-\lambda)}{\phi(L, x)}\\
	&=\lim_{x\to \lambda} \frac{(x-\lambda)^{m-1} h(x) (x-\lambda)}{(x-\lambda)^m g(x)}\\
	&=\lim_{x\to \lambda} \frac{h(x)}{g(x)}.
\end{align*}
The result follows from the fact that
\[\phi^{(m)}(L, \lambda) = m! g(\lambda)\]
and 
\[\phi_u^{(m-1)}(L,\lambda) = (m-1)! h(\lambda).\tag*{\sqr53}\]

This yields a field trace formula for $\tr(X)$. Given two polynomials $f(x), g(x)$ in  $\rats[x]$ with $f\ne 0$, the trace $\tr^{\rats[x]/(f)}_{\rats}g(x) $ is the trace of the linear map $\rats[x]/(f) \to \rats[x]/(f)$ given by:
\[h(x) \mapsto h(x) g(x) \pmod{f(x)}.\]

\begin{theorem}
	Let $X$ be a graph with Laplacian matrix $L$. Suppose the characteristic polynomial of $L$ factors into 
	\[\phi(L,x) = f_1(x)^{m_1} \cdots f_k(x)^{m_k},\]
	where each $f_i$ is irreducible over $\rats$, and $\gcd(f_i, f_j)=1$ whenever $i\ne j$. For each vertex $u$, let
	\[\psi_{i,u}(L, x) = m_i \frac{\phi_u^{(m_i-1)}(L, x)}{\phi^{(m_i)}(L, x)}.\]
	Then
	\[\tr(X) =  \sum_{i=1}^k \sum_u\tr^{\rats[x]/ (f_i)}_{\rats} \psi_{i, u}^2(L, x).\]
\end{theorem}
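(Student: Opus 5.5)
The plan is to start from $\tr(X)=\sum_\lambda \sum_u (E_\lambda)_{uu}^2$, which holds because $\AMM=\sum_\lambda E_\lambda\circ E_\lambda$ and the diagonal of $E_\lambda\circ E_\lambda$ consists of the squares of the diagonal entries of $E_\lambda$. We then group the eigenvalues by their minimal polynomials over $\rats$. Each root $\lambda$ of $f_i$ is a Laplacian eigenvalue of multiplicity exactly $m_i$, since the $f_i$ are irreducible, hence separable, and pairwise coprime. So the preceding lemma gives $(E_\lambda)_{uu}=\psi_{i,u}(L,\lambda)$ for every root $\lambda$ of $f_i$. Therefore
\[
\tr(X)=\sum_{i=1}^k\sum_u\sum_{\lambda:\,f_i(\lambda)=0}\psi_{i,u}(L,\lambda)^2 ,
\]
and it remains to identify the inner sum over the roots of $f_i$ with the field trace $\tr^{\rats[x]/(f_i)}_{\rats}\psi_{i,u}^2(L,x)$.

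The main obstacle is that $\psi_{i,u}$ is a rational function, whereas the field trace was defined for polynomials $g(x)\in\rats[x]$. The first step is to show that the denominator $\phi^{(m_i)}(L,x)$ does not vanish at any root of $f_i$. Write $\phi=f_i^{m_i}g$ with $\gcd(f_i,g)=1$. Then $\phi^{(m_i)}(\lambda)=m_i!\,f_i'(\lambda)^{m_i}g(\lambda)$, and this is nonzero because $f_i$ is separable and $g(\lambda)\neq0$. This is the same computation as in the proof of the lemma. Consequently $\phi^{(m_i)}$ is invertible modulo $f_i$, and $\psi_{i,u}^2$ determines a well-defined element of the field $K_i=\rats[x]/(f_i)$. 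Concretely, we may replace $\psi_{i,u}^2$ by a polynomial $p_{i,u}(x)\in\rats[x]$ congruent to it modulo $f_i$, obtained by multiplying by the inverse of the denominator computed through Bézout's identity. This $p_{i,u}$ agrees with $\psi_{i,u}^2$ at every root of $f_i$, and we interpret the statement's trace as the trace of multiplication by $p_{i,u}$.

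Finally we invoke the standard fact that for a separable irreducible $f$ and any $p\in\rats[x]$, the trace of multiplication by $p$ on $\rats[x]/(f)$ equals $\sum_{f(\lambda)=0}p(\lambda)$. To prove it, extend scalars to $\mathbb{C}$. Then $\mathbb{C}[x]/(f)\cong\prod_{\lambda}\mathbb{C}$ by the Chinese remainder theorem, multiplication by $p$ becomes diagonal with entries $p(\lambda)$, and the trace is unchanged by extension of scalars. Applying this with $f=f_i$ and $p=p_{i,u}$, and then summing over $u$ and $i$, gives the formula.
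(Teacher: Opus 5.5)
Your proof is correct and follows the paper's route. You rewrite $\tr(X)$ as $\sum_\lambda\sum_u (E_\lambda)_{uu}^2$, apply the preceding lemma to get $(E_\lambda)_{uu}=\psi_{i,u}(L,\lambda)$ at each root of $f_i$, and identify the sum over the roots of $f_i$ with the field trace via $\rats(\lambda)\cong\rats[x]/(f_i)$; you also correctly justify two points the paper leaves implicit, namely that $\phi^{(m_i)}(L,x)$ is invertible modulo $f_i$ (so $\psi_{i,u}^2$ defines an element of $\rats[x]/(f_i)$) and that the trace of multiplication by $p$ on $\rats[x]/(f_i)$ equals $\sum_{f_i(\lambda)=0}p(\lambda)$.
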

\proof
For any root $\lambda$ of $f_i(x)$, we have
\[\sum_{\lambda: f_i(\lambda)=0} (E_{\lambda})^2_{u,u} = \sum_{\lambda: f_i(\lambda)=0} \psi_{i,u}^2(\lambda)=\tr^{\rats(\lambda)}_{\rats} \psi_{i,u}^2(\lambda).\]
By the isomorphism from $\rats(\lambda)$ to $Q[x]/(f_i)$:
\[\psi_{i,u}(\lambda) \mapsto \psi_{i,u}(x) \pmod{f_i(x)},\]
we obtain
\[\tr(X) =  \sum_{i=1}^k \sum_u\tr^{\rats[x]/ (f_i)}_{\rats} \psi_{i, u}^2(x). \tag*{\sqr53}\]

A partition $\pi=\{\seq{C}{1}{2}{d}\}$ of $V(X)$ is \textsl{equitable} if for any $i$ and $j$, there is a constant $\gamma_{ij}$ such that every vertex in $C_i$ has $\gamma_{ij}$ neighbors in $C_j$. Given an equitable partition of $V(X)$ with characteristic matrix $P$, there is a $Q$, called the \textsl{quotient matrix}, such that
\[LP = PQ.\]
It is a well-known result (see, for example,  \cite[Section 9.3]{Godsil2001}) that the characteristic polynomial of $Q$ divides that of $L$. We extend our notation $\phi(L, x)$ and $\phi_u(L,x)$ to 
\[\phi(Q,x) = \det(xI-Q),\quad \phi_i(Q, x) = \det(xI - Q\backslash i),\]
 and show how to compute part of $\tr(X)$ using the latter polynomials. 

\begin{lemma}\label{eqtrace}
	Let $X$ be a graph with an equitable partition $\{C_1,\ldots,C_d\}$. Let $Q$ be the quotient matrix, and let $c_j=\abs{C_j}$ for $j=1,2,\cdots,d$. Let $f(x)\in \rats[x]$ be an irreducible factor of $\phi(Q, x)$ of multiplicity $m$, and suppose 
	\[\gcd\left(f(x), \frac{\phi(L, x)}{\phi(Q, x)}\right) = 1.\]
	For each cell $C_j$, define
	 \[
	\psi_j(Q, x)
	=m\frac{\phi_j^{(m-1)}(Q,x)}{\phi^{(m)}(Q,x)}.
	\]
	Then the contribution to $\tr(X)$ from the roots of $f(x)$ is
\[
\sum_{j=1}^d\frac{1}{c_j}
\tr_{\mathbb{Q}}^{\mathbb{Q}[x]/(f)}
\psi_j^2(Q, x).
\]
\end{lemma}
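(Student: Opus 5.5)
Let $\lambda$ be a root of $f$. Because $\gcd(f,\phi(L,x)/\phi(Q,x))=1$, the multiplicity of $\lambda$ as an eigenvalue of $L$ equals its multiplicity $m$ as an eigenvalue of $Q$. The plan is to show that every $\lambda$-eigenvector of $L$ is constant on cells, so that $E_\lambda$ can be computed from $Q$. Then the diagonal entries $(E_\lambda)_{uu}$, for $u\in C_j$, are expressed through $\psi_j(Q,\lambda)$. Finally we sum over $u$ and over the roots of $f$, exactly as in the proof of the field trace formula above.

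\textbf{Step 1: eigenvectors are lifted from $Q$.} From $LP=PQ$, if $Qy=\lambda y$ then $L(Py)=\lambda Py$, and $P$ has full column rank. Hence $P$ maps the $\lambda$-eigenspace of $Q$, of dimension $m$ (note $Q$ is diagonalizable, being similar to a symmetric matrix), injectively into the $\lambda$-eigenspace of $L$. That space also has dimension $m$ by the gcd hypothesis, so every $\lambda$-eigenvector of $L$ has the form $Py$.

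\textbf{Step 2: symmetrize $Q$.} Let $D=\operatorname{diag}(c_1,\dots,c_d)=P^TP$. Then $\widetilde P=PD^{-1/2}$ has orthonormal columns, and $\widetilde Q=D^{1/2}QD^{-1/2}=\widetilde P^T L\widetilde P$ is symmetric. Let $F_\lambda$ be the $\lambda$-eigenprojection of $\widetilde Q$. By Step 1, $E_\lambda=\widetilde P F_\lambda \widetilde P^T$. Therefore, for $u\in C_j$,
\[(E_\lambda)_{uu}=\frac{1}{c_j}(F_\lambda)_{jj}.\]

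\textbf{Step 3: diagonal entries of $F_\lambda$ via $Q$.} Apply the earlier lemma on $(E_\lambda)_{uu}$ to the symmetric matrix $\widetilde Q$; its proof used only symmetry and Cramer's rule, so it goes through unchanged. This gives
\[(F_\lambda)_{jj}=m\,\frac{\phi_j^{(m-1)}(\widetilde Q,\lambda)}{\phi^{(m)}(\widetilde Q,\lambda)}.\]
Diagonal similarity preserves characteristic polynomials. It also commutes with deleting row and column $j$, since $\widetilde Q\backslash j=D'^{1/2}(Q\backslash j)D'^{-1/2}$. So we may replace $\widetilde Q$ by $Q$, and $(F_\lambda)_{jj}=\psi_j(Q,\lambda)$.

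\textbf{Step 4: sum over vertices and roots.} The contribution of $\lambda$ to $\tr(X)$ is
\[\sum_u (E_\lambda)_{uu}^2=\sum_j c_j\cdot\frac{1}{c_j^2}\,\psi_j(Q,\lambda)^2=\sum_j\frac{1}{c_j}\,\psi_j(Q,\lambda)^2.\]
Summing over the roots of $f$ turns each term into $\tr^{\rats[x]/(f)}_{\rats}\psi_j^2(Q,x)$, via the isomorphism $\rats(\lambda)\cong\rats[x]/(f)$, as in the field trace theorem. Since $f$ is irreducible over $\rats$ and hence separable, its roots are distinct, so this sum is exactly the field trace.

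\textbf{Main obstacle.} The delicate point is Step 1. One must confirm that the gcd hypothesis forces the whole $\lambda$-eigenspace of $L$ to come from $Q$, which needs the multiplicity of $\lambda$ in $\phi(L,x)$ to equal that in $\phi(Q,x)$. One must also track the normalization $D^{1/2}$ correctly so that the factor $1/c_j$ appears and not $c_j$.
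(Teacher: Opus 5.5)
Your proposal is correct and follows the paper's proof. The paper likewise passes to the normalized characteristic matrix $\widehat{P}=P(P^TP)^{-1/2}$ and the symmetric matrix $\widehat{Q}=\widehat{P}^TL\widehat{P}$, uses the gcd hypothesis to get $E_\lambda=\widehat{P}F_\lambda\widehat{P}^T$, and reads off $(E_\lambda)_{uu}=\frac{1}{c_j}\psi_j(Q,\lambda)$ for $u\in C_j$. Your Steps 1 and 3 spell out what the paper leaves implicit: the dimension count behind $E_\lambda=\widehat{P}F_\lambda\widehat{P}^T$, and the invariance of $\phi(Q,x)$ and $\phi_j(Q,x)$ under the diagonal similarity $Q\mapsto D^{1/2}QD^{-1/2}$.
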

\proof 
Let $\widehat{P}$ be the normalized characteristic matrix, that is,
\[\widehat{P} = P(P^TP)^{-1/2}.\]
Note that $Q$ is similar to the symmetric matrix
\[\widehat{Q} = \widehat{P}^T L \widehat{P}.\]
For each eigenvalue $\lambda$ of $Q$, let $F_{\lambda}$ be the projection onto the $\lambda$-eigenspace of $\widehat{Q}$, and let $E_{\lambda}$ be the projection onto the $\lambda$-eigenspace of $L$. Since $f(x)$ is coprime to $\phi(L,x)/\phi(Q,x)$, we have
\[E_{\lambda} = \widehat{P} F_{\lambda} \widehat{P}^T.\]
Therefore for any $u\in C_j$,
\[(E_{\lambda})_{uu} = \frac{1}{c_j} (F_{\lambda} )_{jj} = \frac{1}{c_j} \psi_j(Q,\lambda). \tag*{\sqr53}\]

\section{Double stars}
We apply results from the earlier section to find the laziness of double stars. Let $DS(\ell, r)$ denote the double star on $n$ vertices with $\ell$ leaves on one side and $r$ leaves on the other side, as shown in Figure \ref{DS}.

\begin{figure}[H]
	\tikzset{every node/.style={circle}}
	\begin{center}
		\begin{tikzpicture}
			\node[draw] at (0,0.25) (0) {};
			\node[draw] at (2,0.25) (1) {};
			
			\node[draw] at (4,1.5) (k) {$1$};
			\node[draw] at (4,0.5) (kk) {$2$};
			\node[draw,draw=white] at (4,-0.25) (dots) {$\vdots$};
			\node[draw] at (4,-1) (n) {$r$};
			
			\node[draw] at (-2.0,1.5) (3) {$1$};
			\node[draw] at (-2.0,0.5) (4) {$2$};
			\node[draw,draw=white] at (-2.0,-0.25) (dots) {$\vdots$};
			\node[draw] at (-2.0,-1) (l) {$\ell$};
			\draw (0) -- (1);
			\draw (0) -- (3);
			\draw (0) -- (4);
			\draw (0) -- (l);
			\draw (1) -- (k);
			\draw (1) -- (kk);
			\draw (1) -- (n);
		\end{tikzpicture}
	\end{center}
	\caption{$DS(\ell,r)$}
	\label{DS}
\end{figure}

\begin{theorem}
	We have
	\[\tr(DS(\ell, r)) = \frac{1}{n} + n-6 + \frac{f(n,s)}{nsg(n,s)},\] 
	where $s=\ell r$ and
	\begin{align*}
		f(n,s)
		=&4(2n-3)s^3
		-\left(2n^3-43n^2+88n+24\right)s^2 \\
		&-4\left(2n^4-18n^3+31n^2-9n+3\right)s 
		-6n(n-2)(n-1)^3,\\
		g(n,s)
		=&4s^2-\left(n^2-20n-8\right)s-4(n-1)^3.
	\end{align*}
\end{theorem}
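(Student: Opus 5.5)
Throughout, $n=\ell+r+2$. I will split the spectrum of $L=L(DS(\ell,r))$ into a part that is easy to handle directly and a part handled by the equitable partition and Lemma~\ref{eqtrace}.

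\textbf{Step 1: the easy eigenvalue.} Let $a$ be the centre with $\ell$ leaves and $b$ the centre with $r$ leaves. Any vector supported on the leaves of $a$, with entries summing to zero, is an eigenvector for eigenvalue $1$; the same holds for the leaves of $b$. This gives a $1$-eigenspace of dimension $(\ell-1)+(r-1)=n-4$. Its projection is
\[
E_1=\Bigl(I_\ell-\tfrac1\ell J\Bigr)\oplus\Bigl(I_r-\tfrac1r J\Bigr)\oplus 0_2 .
\]
I must check that $1$ is not a root of the quotient polynomial; the cubic below gives $h(1)=\ell r\neq 0$, so this holds. The contribution of $E_1$ is
\[
\tr(E_1^{\circ 2})=\ell\Bigl(1-\tfrac1\ell\Bigr)^2+r\Bigl(1-\tfrac1r\Bigr)^2=n-6+\tfrac1\ell+\tfrac1r .
\]
Since $\tfrac1\ell+\tfrac1r=\tfrac{n-2}{s}$, this already produces the term $n-6$ in the theorem.

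\textbf{Step 2: the quotient.} Use the equitable partition $\{L_a,\{a\},\{b\},L_b\}$, where $L_a,L_b$ are the two leaf sets. The quotient matrix is
\[
Q=\begin{pmatrix}1&-1&0&0\\-\ell&\ell+1&-1&0\\0&-1&r+1&-r\\0&0&-1&1\end{pmatrix}.
\]
Its characteristic polynomial is $\phi(Q,x)=x\,h(x)$ with $h$ a cubic. The eigenvalue $0$ contributes $\tr((\tfrac1nJ)^{\circ2})=\tfrac1n$; this is the $\tfrac1n$ term in the theorem.

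\textbf{Step 3: the cubic.} Writing $h(x)=x^3-p x^2+q x-nc$, I expect $p=n+2$, $q=2n-1+s$ and $h(0)=-n$ (the value coming from the tree count). The three roots are simple and are not equal to $1$. I would then apply Lemma~\ref{eqtrace} directly to $h$, even when $h$ is reducible over $\rats$. Since the lemma's argument is valid root by root, the sum of $\tr_\rats$ over the irreducible factors equals the symmetric sum over the three roots $\lambda$. That sum is
\[
\sum_{j}\frac{1}{c_j}\sum_{\lambda}\left(\frac{\phi_j(Q,\lambda)}{\phi'(Q,\lambda)}\right)^2,\qquad c=(\ell,1,1,r).
\]
The principal minors $\phi_j(Q,x)$ are explicit cubics; for example, deleting cell $a$ leaves a block diagonal matrix. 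I would reduce each square $\phi_j^2/\phi'^2$ modulo $h$, or equivalently evaluate the power sums over the roots of $h$ using Newton's identities in $p,q$.

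\textbf{Step 4 and main obstacle.} The result from Step 3 is a rational function symmetric under exchanging $\ell$ and $r$, so it should depend only on $n$ and $s$. The denominator is $\prod_\lambda \phi'(\lambda)^2$, which (after removing the factor coming from the root $0$) is essentially the discriminant of $h$. I expect this to produce $s\,g(n,s)$ with $g$ as stated. The main obstacle is this symmetric-function computation: clearing denominators and rewriting everything in terms of $n$ and $s$ alone to reach $f(n,s)$. I would do it by computing $\sum_\lambda \phi_j(\lambda)^2\prod_{\mu\neq\lambda}\phi'(\mu)^2$ as a polynomial in the elementary symmetric functions $p,q$, with the help of computer algebra. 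Finally, I would add the three pieces $\tfrac1n$, $n-6+\tfrac{n-2}{s}$ and the cubic contribution, put them over the common denominator $nsg$, and check the formula at $\ell=r=1$ ($P_4$, where $\tr(P_4)=5/4$) as a sanity check.
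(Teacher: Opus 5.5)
Your proposal follows the paper's proof essentially step for step: the contributions $\frac1n$ from $E_0$ and $n-6+\frac{n-2}{s}$ from $E_1$, then Lemma~\ref{eqtrace} applied to the three simple roots of the cubic, with the final simplification to $f/(nsg)$ left to computer algebra. Summing over all roots at once, instead of splitting into the cases $\ell\ne r$ and $\ell=r$ as the paper does, is a harmless streamlining, and your guess about the denominator is right: the discriminant of the cubic equals $-s\,g(n,s)$.

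One slip needs fixing before you compute. The linear coefficient of the cubic is $s+2n+1$, not $2n-1+s$; your own check $h(1)=\ell r$ already requires the former.
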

\proof 
The double star $DS(\ell, r)$ has an equitable partition with quotient matrix 
\[Q = \pmat{1&-1&0&0\\
	-\ell & \ell+1 & -1 & 0\\
	0& -1 & r+1 & -r\\
	0& 0& -1 & 1}.\]
Hence the eigenvalues of $DS(\ell, r)$ are
\[0^{(1)},\quad 1^{(n-4)},\quad \lambda^{(1)}, \quad \mu^{(1)}, \quad \theta^{(1)},\]
where $\lambda$, $\mu$, $\theta$ are the roots of
\begin{equation}\label{cubic}
	\frac{\phi(Q, x)}{x} = x^3 - (n+2) x^2 + (\ell r + 2n + 1)x - n.
\end{equation}
Since the eigenprojections onto the $0$-eigenspace and $1$-eigenspace are, respectively,
\[E_0 = \frac{1}{n}J,\quad E_1 = \pmat{
	I - \frac{1}{\ell} J & &  \\
	& 0 &\\
	& & I - \frac{1}{r}J}.\]
their contributions to $\tr(DS(\ell, r))$ are
\begin{equation}\label{E0E1}
	\tr(E_0^{\circ 2})=\frac{1}{n},\quad \tr(E_1^{\circ 2}) = n-6 + \frac{n-2}{\ell r}.
\end{equation}
To compute the remaining contribution, note that
\begin{align*}
	\phi_1(Q, x) &= x^3 - (n+1) x^2 + ((r+1)\ell +n) x -\ell \\
	\phi_2(Q, x) &= x^3 - (r+3)x^2 + (r+3) x - 1\\
	\phi_3(Q, x) &= x^3 - (\ell+3) x^2 + (\ell+3) x-1\\
	\phi_4(Q, x) &= x^3 - (n+1)x^2 + (r(\ell+1) + n) x - r
\end{align*}
If $\ell \ne  r$, then \eqref{cubic} is irreducible. Otherwise, it factors into two irreducible polynomials:
\[\frac{\phi(Q,x)}{x-1}=(x-(s+1)) (x^2-(s+3)x + 2),\]
where $s=\ell =r$. 
In both cases, a calculation using Lemma \ref{eqtrace} and SageMath shows \eqref{cubic} contributes
\[\frac{f(n,s)}{npg(n,s)} - \frac{n-2}{s}.\] 
to $\tr(DS(\ell, r))$.
\qed

In particular, we get the following formula and lower bound for the laziness of $DS(n-3,1)$.
\begin{theorem}\label{dsn-31}
	The laziness of $DS(n-3,1)$ is
	\[\tr(DS(n-3,1)) = n-\frac{14}{5} - \frac{4(14n^4-165n^3+705n^2-1108n+120)}{5n(n-3)(5n^3-39n^2+88n-16)}.\]
	If $n\ge 5$, then
	\[\tr(DS(n-3,1))> n-\frac{14}{5} - \frac{56}{25n}.\]
\end{theorem}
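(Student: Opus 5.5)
The plan is to specialize the double-star formula from the previous theorem to $\ell = n-3$, $r=1$, and then bound the resulting rational error term.

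\emph{Specialization.} Set $s = \ell r = n-3$ in
\[\tr(DS(\ell, r)) = \frac{1}{n} + n-6 + \frac{f(n,s)}{nsg(n,s)}.\]
First compute $g(n,n-3)$. Expanding $4(n-3)^2 - (n^2-20n-8)(n-3) - 4(n-1)^3$ gives
\[g(n,n-3) = -(5n^3-39n^2+88n-16),\]
which matches the cubic in the stated denominator. Next expand $f(n,n-3)$, a polynomial of degree $5$ in $n$. Then write
\[\frac{1}{n}+n-6+\frac{f(n,n-3)}{n(n-3)g(n,n-3)}\]
over the common denominator $5n(n-3)(5n^3-39n^2+88n-16)$. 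Subtracting $n - \frac{14}{5}$ should leave exactly
\[-\frac{4(14n^4-165n^3+705n^2-1108n+120)}{5n(n-3)(5n^3-39n^2+88n-16)}.\]
This is a routine polynomial identity; I would verify it by expanding both numerators, or by checking agreement at six values of $n$, since both sides are rational with the same denominator and numerators of bounded degree. One sanity check is that the leading behaviour is $-\frac{4\cdot 14}{25n}$, which is consistent with the claimed $\frac{56}{25n}$.

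\emph{The inequality.} For $n\ge 5$ it suffices to show
\[\frac{4(14n^4-165n^3+705n^2-1108n+120)}{5n(n-3)(5n^3-39n^2+88n-16)} < \frac{56}{25n}.\]
First check that the denominator is positive for $n \ge 5$. Clearly $n(n-3) > 0$. For the cubic, evaluate $5\cdot125-39\cdot25+440-16 = 74 > 0$ at $n=5$, and its derivative $15n^2-78n+88$ is positive for $n\ge5$, so the cubic stays positive. Multiplying through by $25n(n-3)(5n^3-39n^2+88n-16)/4$ reduces the claim to
\[5(14n^4-165n^3+705n^2-1108n+120) < 14(n-3)(5n^3-39n^2+88n-16).\]
The right side expands to $70n^4 - 756n^3 + 2870n^2 - 3920n + 672$. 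The left side is $70n^4 - 825n^3 + 3525n^2 - 5540n + 600$. The difference, right minus left, is
\[69n^3 - 655n^2 + 1620n + 72.\]

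\emph{Main obstacle.} The hardest part is showing this cubic is positive for all $n\ge5$, because it may dip near $n\approx 5$–$6$. The values are $n=5$: $8625-16375+8100+72=422>0$; $n=6$: $14904-23580+9720+72=1116>0$; $n=4$ is not needed. For $n\ge 7$, use $69n^3-655n^2 = n^2(69n-655) \ge 0$ once $n\ge 10$, and check $n=7,8,9$ directly ($7$: $23667-32095+11340+72>0$; similarly for $8$ and $9$). Alternatively, the derivative $207n^2-1310n+1620$ has its largest root below $5$, so the cubic is increasing on $[5,\infty)$ and its value $422$ at $n=5$ suffices. The only genuine risk is an arithmetic slip in the symbolic expansion of $f(n,n-3)$, so I would double-check it numerically, for example at $n=5$ against a direct computation for $DS(2,1)$.
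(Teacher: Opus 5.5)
Your proposal is correct and takes the same route as the paper, which obtains this theorem by specializing the double-star formula at $s=\ell r=n-3$ and states the bound without further proof. Your computations check out: $g(n,n-3)=-(5n^3-39n^2+88n-16)$, the numerator simplifies to $-4(14n^4-165n^3+705n^2-1108n+120)$, and the inequality reduces to $69n^3-655n^2+1620n+72>0$, which holds for $n\ge 5$ because this cubic equals $422$ at $n=5$ and its derivative has largest root about $4.64$.
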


Using SageMath, we find that the derivative of $\tr(DS(\ell, r))$ with respect to $s=\ell r$ is negative. It follows that more unbalanced double stars are lazier. 

\begin{theorem}
	Suppose $1\le \ell\le \frac{n}{2}-2$. Then
	\[\tr(DS(\ell, r))>\tr(DS(\ell+1, r-1)).\]
\end{theorem}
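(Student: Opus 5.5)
The plan is to reduce the comparison to the monotonicity of the closed formula for $\tr(DS(\ell,r))$ in the previous theorem, viewed as a function of $s=\ell r$ with $n$ fixed. Write $n=\ell+r+2$ and set
\[F_n(s)=\frac{1}{n}+n-6+\frac{f(n,s)}{n s\, g(n,s)} .\]
Moving from $DS(\ell,r)$ to $DS(\ell+1,r-1)$ keeps $n$ fixed and changes $s$ from $\ell r$ to $(\ell+1)(r-1)=\ell r+(r-\ell-1)$. The hypothesis $\ell\le \frac n2-2$ gives $r=n-2-\ell\ge \ell+2$, so $r-\ell-1\ge 1$ and $s$ strictly increases. (The roles of $\ell$ and $r$ look reversed relative to the abstract, but $DS(\ell,r)\cong DS(r,\ell)$, and the two graphs compared here have parameter products $\ell r<(\ell+1)(r-1)$.) It therefore suffices to show that $F_n$ is strictly decreasing on the interval of $s$ values that occur, namely $n-3\le s\le \lfloor (n-2)^2/4\rfloor$.

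First, check that $g(n,s)\ne 0$ on this interval, so that $F_n$ is a smooth function of $s$ there. Since
\[g(n,s)=4s^2-(n^2-20n-8)s-4(n-1)^3\]
is a quadratic in $s$ with positive leading coefficient and $g(n,0)<0$, it has exactly one positive root. So it is enough to show that this root exceeds $(n-2)^2/4$, i.e.\ that $g(n,(n-2)^2/4)<0$. This is a one-variable polynomial inequality in $n$ and can be checked directly. Note that $g$ vanishing would contradict the finiteness of the laziness anyway, which serves as a sanity check.

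Next, compute
\[\frac{dF_n}{ds}=\frac{1}{n}\cdot\frac{f_s\, s g-f\,(g+s g_s)}{s^2g^2}.\]
The sign is that of the numerator $N(n,s)=s g f_s - f g - s f g_s$, which is a polynomial in $n$ and $s$. I would show $N(n,s)<0$ for $n\ge 4$ and $n-3\le s\le (n-2)^2/4$. Substituting $s=(n-2)^2 t/4$ with $t\in[4(n-3)/(n-2)^2,\,1]$, or alternatively $n=m+c$ with $s$ shifted to its lower endpoint, and expanding in the new variables should make every coefficient visibly of one sign. This is the kind of certificate the SageMath computation mentioned before the statement would produce.

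The main obstacle is this sign analysis of $N$, which has fairly high degree. If a clean positivity-of-coefficients certificate fails, I would split into small $n$, checked by direct evaluation of the exact formula, and large $n$, where the leading terms in $n$ dominate. As a fallback that avoids derivatives entirely, one can compare the two values directly: clear denominators in $F_n(s_1)-F_n(s_2)$ with $s_2-s_1=r-\ell-1\ge1$, factor out $(s_2-s_1)$, and show the remaining polynomial is positive on the admissible range.
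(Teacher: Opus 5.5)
Your reduction is the paper's own. You fix $n$, view $\tr(DS(\ell,r))$ as $F_n(s)$, observe that $s$ increases by $r-\ell-1\ge 1$, and show $dF_n/ds<0$ on $[n-3,(n-2)^2/4]$ through the sign of the numerator. Your preliminary check that $g$ has no zero there is correct, and the paper leaves it implicit. Indeed $g(n,0)=-4(n-1)^3<0$ and $g(n,(n-2)^2/4)=-n^2-8n+16<0$, so $g<0$ on the whole range.

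\textbf{The gap.} The step that carries all the content, namely $N<0$ on that range, is never established, and the certificate you propose for it cannot work.

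\emph{Why the certificate fails.} Expanding your numerator gives exactly $N=4n\,h$, where $h$ is the quartic appearing in the paper's proof, with leading term $44s^4$. So $N$ has leading term $176n\,s^4>0$ and is negative only on a bounded range of $s$. After any substitution $s=\alpha+\beta v$ with $\beta>0$ (your $s=n-3+u$ or $s=(n-2)^2t/4$), the coefficient of $v^4$ is $176n\beta^4>0$. Hence the coefficients in the new variables can never all be negative, whatever shift $n=m+c$ you use.

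\emph{What is needed.} You need a positivity certificate adapted to the bounded interval. The paper sets $m=n-4$ and $s=m+1+\frac{m^2}{4}t$ with $t\in[0,1]$. It then writes $64h$ in the Bernstein basis $t^k(1-t)^{4-k}$, with binomial weights $1,4,6,4,1$. Each of the five coefficients $b_0,\dots,b_4$ is a polynomial in $m$ with only negative coefficients. This gives $h<0$ for every $n\ge 4$ and every admissible $s$ at once.

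\emph{Your fallbacks.} These are not carried out. The asymptotic one in particular needs uniformity in $s$. With $s=\sigma n^2$, the top-order part is $n^8\sigma(44\sigma^3-72\sigma^2+27\sigma-3)$, which degenerates as $\sigma\to 0$. So the regime $s=O(n)$, and any residual small-$n$ cases, would have to be handled separately.
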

\proof
The derivative of $\tr(DS(\ell, r))$ with respect to $s=\ell r$ has the same sign as
\begin{align*}
	h(n,s)=&44s^4-8(9n^2-16n-2)s^3\\
	&+\left(27n^4-242n^3+276n^2+174n+8\right)s^2\\
	&-3(n-2)(n-1)^3(n^2-20n-8)s\\
	&-6(n-2)(n-1)^6.
\end{align*}
For $1\le r \le \frac{n}{2}-1$, the product $s=\ell r$ is strictly increasing, and we have 
\[n-3\le s \le \frac{(n-2)^2}{4}.\]
Thus we may write  
\[s = m+1+ \frac{m^2}{4} t\]
for some $m=n-4$ and $0\le t\le 1$. Substituting this into $h(n,s)$ gives
\begin{align*}
	64\,h\left(m+4,m+1+\frac{m^2}{4}t\right)
	=&b_0(1-t)^4+4b_1t(1-t)^3+6b_2t^2(1-t)^2\\
	&+4b_3t^3(1-t)+b_4t^4,
\end{align*}
where
\begin{align*}
	b_0=&-576m^7-5952m^6-23936m^5-56256m^4\\
	&-112320m^3-164736m^2-137216m-71680,\\[1mm]
	b_1=&-12m^8-348m^7-3172m^6-14092m^5\\
	&-42216m^4-99024m^3-152512m^2-137216m-71680,\\[1mm]
	b_2=&-6m^8-\frac{412}{3}m^7-1280m^6-7588m^5\\
	&-31552m^4-85728m^3-140288m^2-137216m-71680,\\[1mm]
	b_3=&-12m^7-388m^6-4424m^5-24264m^4\\
	&-72432m^3-128064m^2-137216m-71680,\\[1mm]
	b_4=&-m^8-40m^7-608m^6-4600m^5-20352m^4\\
	&-59136m^3-115840m^2-137216m-71680.
\end{align*}
Since all coefficients are negative, the trace $\tr(DS(\ell, r))$ is strictly decreasing for $1\le r\le \frac{n}{2}-1$.
\qed

\section{The second laziest tree}
In this section, we prove the second laziest tree on $n$ vertices is the double star $DS(n-3,1)$. This is verified computationally for all trees with $n\le 18$, so most of our results assume $n\ge 19$. 

We start with two lemmas on the eigenprojections of a Laplacian matrix. For any subset $S$ of vertices, let $\chi_S$ denote the characteristic vector of $S$.

\begin{lemma}\label{W_vv}
	Let $X$ be a graph with Laplacian spectral decomposition
	\[L = \sum_{\lambda} \lambda E_{\lambda}.\] 	
	Given $\omega>0$, let
	\[W= \sum_{0<\lambda\le \omega} E_{\lambda}.\]
	For any vertex $v$ with $\deg(v)\ge \omega$, we have
	\[W_{vv}\le \frac{\deg(v)}{\deg(v) + (\deg(v)-\omega)^2}.\]
\end{lemma}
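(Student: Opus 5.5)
Write $d=\deg(v)$ and $e_v$ for the standard basis vector at $v$. The quantity $W_{vv}=\norm{We_v}^2$ is the squared length of the projection of $e_v$ onto the span of the eigenvectors with eigenvalues in $(0,\omega]$. We will bound it by testing $e_v$ against $L$ and $L^2$. The identity $L\one=0$ gives $E_0=\frac1n J$ when $X$ is connected, and in general $(E_0)_{vv}\ge0$; we only need that $E_0$ contributes nothing to $L$. Two facts are computed directly:
\[(L)_{vv}=d,\qquad (L^2)_{vv}=\norm{Le_v}^2=d^2+d.\]
The second holds because column $v$ of $L$ has entry $d$ at $v$ and $-1$ at each of its $d$ neighbours.

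Expand both quantities spectrally. Set $p_\lambda=(E_\lambda)_{vv}\ge0$, so that $\sum_\lambda p_\lambda=1$ and $W_{vv}=\sum_{0<\lambda\le\omega}p_\lambda$. Then
\[\sum_\lambda \lambda p_\lambda=d,\qquad \sum_\lambda \lambda^2 p_\lambda=d^2+d,\]
and subtracting gives
\[\sum_\lambda (\lambda-d)^2 p_\lambda = d.\]
This is a Chebyshev-type identity: the "distribution" $p$ has mean $d$ and variance $d$.

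Each $\lambda$ with $0<\lambda\le\omega\le d$ satisfies $(\lambda-d)^2\ge(d-\omega)^2$. The eigenvalue $0$ satisfies $(0-d)^2=d^2\ge(d-\omega)^2$ as well. Applying this directly only gives $W_{vv}\le d/(d-\omega)^2$. The claimed bound $d/(d+(d-\omega)^2)$ is sharper, so I will use the one-sided (Cantelli) form of Chebyshev instead, which is exactly what produces a denominator of the form variance plus squared deviation.

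Concretely, for any $t\ge0$ and any $\lambda\le\omega$ we have $(d+t-\lambda)^2\ge(d+t-\omega)^2$. For eigenvalues above $\omega$ we simply use $(d+t-\lambda)^2\ge0$. Since $0\le\omega$, the eigenvalue $0$ falls in the first group. This yields
\[d+t^2=\sum_\lambda (\lambda-d-t)^2p_\lambda\ \ge\ (d-\omega+t)^2\sum_{\lambda\le\omega}p_\lambda\ \ge\ (d-\omega+t)^2\,W_{vv}.\]
The first equality uses mean $d$ and variance $d$.

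Write $a=d-\omega\ge0$. Minimising $(d+t^2)/(a+t)^2$ over $t$ gives the optimum $t=d/a$ when $a>0$. At this value the ratio equals $d/(d+a^2)$, which is exactly the stated bound. When $a=0$ the bound reads $W_{vv}\le1$, which is trivial.

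The main point needing care is the sign bookkeeping: we need $d+t-\lambda\ge d+t-\omega\ge0$ for every $\lambda\le\omega$, including $\lambda=0$. This is exactly where the hypothesis $\deg(v)\ge\omega$ enters, since it guarantees $a\ge0$ and hence $t\ge0$. We also get a slightly stronger statement for free, namely that $W_{vv}+(E_0)_{vv}$ obeys the same bound.
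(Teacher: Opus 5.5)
Your proof is correct, but it takes a different route from the paper's.

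\textbf{Your route.} You pass to the spectral measure $p_\lambda=(E_\lambda)_{vv}$ of $L$ at $v$. From $L_{vv}=d$ and $(L^2)_{vv}=d^2+d$, this measure has mean $d$ and variance $d$. The stated bound is then exactly Cantelli's one-sided inequality, which you prove by shifting by $t$ and optimising at $t=d/(d-\omega)$.

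\textbf{The paper's route.} The paper never leaves the vertex domain. From $LW\preceq\omega W$ it reads off $(d-\omega)W_{vv}\le e_v^TAWe_v=\sum_{w\sim v}W_{wv}$. It then applies Cauchy--Schwarz against $\chi_{N(v)}$, using $\sum_{w\sim v}W_{wv}^2\le\|We_v\|^2-W_{vv}^2=W_{vv}(1-W_{vv})$, which holds because $W$ is a projection.

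\textbf{How they compare.} Discarding the $v$-coordinate of $We_v$ before Cauchy--Schwarz does the job your free shift does; note $(L-(d+t)I)e_v=-te_v-\chi_{N(v)}$. Both lead to the same optimum $d/(d+(d-\omega)^2)$. The paper's version is shorter and parameter-free. Its entrywise manipulation of $W$ is also the template reused later, in the proof of $\tr(W^{\circ 2})\le\frac35\tr(W)$. Your version isolates exactly what the bound uses about $v$, namely the first two moments of its spectral measure, and it makes the probabilistic nature of the estimate transparent.

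\textbf{On your strengthening.} You note that $(E_0+W)_{vv}$ obeys the same bound. This is not special to your method. The paper's argument uses only $LW\preceq\omega W$ and $W^2=W$, and both still hold with $E_0+W$ in place of $W$.
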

\proof 
Note that
\[LW = \sum_{0<\lambda\le \omega} \lambda E_{\lambda}\preceq\omega \sum_{0<\lambda\le \omega} E_{\lambda} = \omega W.\]
Thus
\begin{align*}
	(\deg(v)-\omega ) W_{vv}&=	e_v^T (D-\omega I)We_v 
	\le  e_v^T A W e_v
	= \sum_{w\sim v} e_w^T We_v.
\end{align*}
Since $\deg(v)\ge \omega$, the left hand side is nonnegative. On the other hand, by the Cauchy-Schwarz inequality,
\begin{align*}
	\abs{\sum_{w\sim v} e_w^T W e_v}^2 &= \ip{We_v[N(v)]}{\chi_{N(v)}}^2  \\
	&\le \ip{We_v[N(v)]}{We_v[N(v)]} \ip{\chi_{N(v)}}{\chi_{N(v)}}\\
	&\le \deg(v) (\ip{We_v}{We_v} - W_{vv}^2)\\
	&= \deg(v) W_{vv}( 1- W_{vv}).
\end{align*}
Hence
\[(\deg(v)-\omega)^2 W_{vv}^2 \le \deg(v) W_{vv} (1- W_{vv}),\]
from which it follows that
\[W_{vv} \le \frac{\deg(v)}{\deg(v) + (\deg(v)-\omega)^2}.\tag*{\sqr53}\]

To prove our second result, we need the following fact from linear algebra.

\begin{lemma}\label{linalg}
	Let $U$ and $V$ be two finite-dimensional vector spaces. Then 
	\[U = (U\cap V ) \oplus \projonto{U}{V^{\perp}}.\]
\end{lemma}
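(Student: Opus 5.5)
We need to prove that for finite-dimensional subspaces $U$ and $V$ of an inner product space,
\[U = (U\cap V)\oplus \projonto{U}{V^{\perp}},\]
where $\proj{U}$ denotes orthogonal projection onto $U$. The plan is to show that $\projonto{U}{V^{\perp}}$ is exactly the orthogonal complement of $U\cap V$ inside $U$. Since $U$ is the orthogonal direct sum of any subspace $W\sbs U$ and $W^{\perp}\cap U$, that suffices. So the goal is
\[\projonto{U}{V^{\perp}} = U\cap (U\cap V)^{\perp},\]
and I will prove it by two inclusions.

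For the inclusion $\subseteq$, take $w\in V^{\perp}$ and any $z\in U\cap V$. Because $\proj{U}$ is symmetric and fixes $z$,
\[\ip{\proj{U} w}{z} = \ip{w}{\proj{U} z} = \ip{w}{z} = 0,\]
where the last equality uses $z\in V$ and $w\in V^{\perp}$. Hence $\proj{U}w$ lies in $U$ and is orthogonal to $U\cap V$.

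For the reverse inclusion I would argue by orthogonal complements within $U$. Suppose $y\in U$ is orthogonal to $\projonto{U}{V^{\perp}}$. Then for every $w\in V^{\perp}$,
\[0=\ip{y}{\proj{U}w}=\ip{\proj{U}y}{w}=\ip{y}{w},\]
so $y\in (V^{\perp})^{\perp}=V$ by finite-dimensionality. Therefore $y\in U\cap V$. This shows that the complement of $\projonto{U}{V^{\perp}}$ in $U$ is contained in $U\cap V$. Taking complements in $U$ gives $U\cap(U\cap V)^{\perp}\sbs \projonto{U}{V^{\perp}}$.

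Combining the two inclusions yields the equality, and the orthogonal decomposition of $U$ gives the stated direct sum (in fact an orthogonal one). The only delicate points are the identity $(V^{\perp})^{\perp}=V$ and taking complements relative to $U$; both rely on finite dimension, which is why the statement assumes it. I do not expect a real obstacle here.
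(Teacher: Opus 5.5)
Your proof is correct and has the same skeleton as the paper's: both show that $\projonto{U}{V^{\perp}}$ is exactly the orthogonal complement of $U\cap V$ inside $U$, and then use $U=W\oplus(W^{\perp}\cap U)$. The difference is in how that identification is proved.

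The paper first invokes the identity $(U\cap V)^{\perp}=U^{\perp}+V^{\perp}$. It then checks $\projonto{U}{V^{\perp}}=(U^{\perp}+V^{\perp})\cap U$ element by element. If $w=\proj{U}v'$ with $v'\in V^{\perp}$, then $\proj{U}(w-v')=0$, so $w=(w-v')+v'\in U^{\perp}+V^{\perp}$. Conversely, if $w=u'+v'\in U$, then $w=\proj{U}w=\proj{U}v'$.

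You never use $(U\cap V)^{\perp}=U^{\perp}+V^{\perp}$. Instead, self-adjointness of $\proj{U}$ gives the forward inclusion in one line. For the reverse inclusion, you show the relative complement of $\projonto{U}{V^{\perp}}$ in $U$ lies in $U\cap V$ (via $(V^{\perp})^{\perp}=V$), and then take complements again within $U$.

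Finite-dimensionality enters both proofs through double-complement facts. In the paper it is buried in the cited identity, which amounts to $(U^{\perp}+V^{\perp})^{\perp}=U\cap V$ followed by taking complements. In yours it is explicit. So your argument is more self-contained. The paper's gives the concrete description $(U^{\perp}+V^{\perp})\cap U$ of the second summand, whose verification is purely element-by-element once that identity is granted.
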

\proof
By the direct sum decomposition
\[U = (U \cap V) \oplus ((U\cap V)^{\perp} \cap U),\]
and the identity
\[(U\cap V)^{\perp} = U^{\perp} + V^{\perp},\]
it suffices to show that 
\[\projonto{U}{V^{\perp}} = (U^{\perp} + V^{\perp}) \cap U.\]
First let $w\in \projonto{U}{V^{\perp}}$. Then $w\in U$, and we can write
\[w = \projonto{U}{v'}\]
for some $v'\in V^{\perp}$. The projection of $w - v'$ onto $U$ is
\[\projonto{U}{w -  v'}=w -  \projonto{U}{v'} = w - w=0.\]
Thus,
\[w = u'+v'\]
for some $u'\in U^{\perp}$. Therefore 
\[w\in (U^{\perp} + V^{\perp}) \cap U.\]
Conversely, let $w\in (U^{\perp} + V^{\perp}) \cap U$. Then $w\in U$, and we can write it as
\[w = u'+v'\]
for some $u\in U^{\perp}$ and $v'\in V^{\perp}$. The projection of $w$ onto $U$ is
\[w = \projonto{U}{w} = \projonto{U}{u'+v'}= \projonto{U}{u'} +  \projonto{U}{v'} =  \projonto{U}{v'}.\]
Therefore
\[w\in \projonto{U}{V^{\perp}}.\tag*{\sqr53}\]

We now consider the sign of some off-diagonal entries of eigenprojections of trees. 

\begin{lemma}\label{off-neg}
	Let $T$ be a tree and let $\lambda$ be a nonzero Laplacian eigenvalue.	For any $u$ such that $(E_{\lambda})_{uu}=0$ and any distinct neighbors $v,w$ of $u$, we have
	\[(E_{\lambda})_{v,w}\le 0.\]
\end{lemma}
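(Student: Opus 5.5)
Since $(E_\lambda)_{uu}=\|E_\lambda e_u\|^2=0$, every $\lambda$-eigenvector $x$ of $L(T)$ satisfies $x_u=0$. The plan is to describe the $\lambda$-eigenspace $U$ explicitly in terms of the branches of $T$ at $u$. Then $E_\lambda$ will be a block-diagonal projection minus a rank-one correction, and the sign of $(E_\lambda)_{vw}$ can be read off directly.

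\textbf{Step 1: describe $U$ branch by branch.} Deleting $u$ splits $T$ into subtrees $T_y$, one for each neighbor $y$ of $u$. For each $y$ let $M_y=L(T_y)+e_ye_y^T$; this is the principal submatrix of $L(T)$ on $V(T_y)$. Let $K_y=\ker(M_y-\lambda I)$, which may be $\{0\}$. Take $x\in U$. Because $x_u=0$, the eigenvalue equation at a vertex of $T_y$ involves only entries inside $T_y$, so $x|_{T_y}\in K_y$. The equation at $u$ itself reads $\deg(u)x_u-\sum_{y\sim u}x_y=\lambda x_u$, which reduces to $\sum_{y\sim u}x_y=0$. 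Conversely, if $x$ vanishes at $u$, lies in $\bigoplus_y K_y$ and satisfies $\sum_{y\sim u}x_y=0$, then $L x=\lambda x$. Hence
\[U=\Big(\bigoplus_{y\sim u}K_y\Big)\cap \chi_{N(u)}^{\perp},\]
with each $K_y$ viewed inside $\re^{V(T)}$ by extending with zeros. I expect this bookkeeping, in particular checking the converse direction at $u$ and at each $y$, to be the only delicate point.

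\textbf{Step 2: compute $E_\lambda$.} Let $P$ be the orthogonal projection onto $W=\bigoplus_y K_y$. Then $P$ is block diagonal with blocks $P_y$ supported on the distinct vertex sets $V(T_y)$. Apply Lemma~\ref{linalg} with $W$ in place of $U$ and $V=\chi_{N(u)}^{\perp}$. This gives $W=U\oplus \operatorname{span}(a)$ with $a=P\chi_{N(u)}$, an orthogonal sum. If $a=0$, then $E_\lambda=P$. Otherwise
\[E_\lambda=P-\frac{1}{\|a\|^2}aa^T.\]

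\textbf{Step 3: read off the sign.} Let $v,w$ be distinct neighbors of $u$. They lie in different branches $T_v$ and $T_w$, so $P_{vw}=0$. Moreover, $\chi_{N(u)}$ meets $V(T_v)$ only in the vertex $v$, so $a_v=(P_v)_{vv}\ge 0$; likewise $a_w=(P_w)_{ww}\ge 0$. Therefore
\[(E_\lambda)_{vw}=-\frac{(P_v)_{vv}(P_w)_{ww}}{\|a\|^2}\le 0,\]
and in the case $a=0$ we get $(E_\lambda)_{vw}=P_{vw}=0$.
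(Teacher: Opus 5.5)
Your proposal is correct and follows essentially the same route as the paper: you identify the $\lambda$-eigenspace as $\ker(\lambda I - L\backslash u)\cap \chi_{N(u)}^{\perp}$, apply Lemma~\ref{linalg} to write $E_\lambda$ as the block-diagonal projection $P$ minus a rank-one term along $P\chi_{N(u)}$, and read off the sign from $P_{vw}=0$ and $(P\chi_{N(u)})_v = P_{vv}\ge 0$. Your normalization $1/\|a\|^2 = 1/\sum_{y\sim u}P_{yy}$ is the correct one; the paper writes $1/\deg(u)$, which is a slip that does not affect the sign. You also handle the degenerate case $a=0$, which the paper does not treat.
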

\proof
Since $(E_{\lambda})_{uu}=0$, and the matrix $L\backslash u$ is block-diagonal with each block indexed by a component of $T-u$, we have
\[\col(E_{\lambda}) =\ker(\lambda I-L) =  \ker(\lambda I-L\backslash u) \cap \chi_{N(u)}^{\perp}.\]
Apply Lemma \ref{linalg} with $U=\ker(\lambda I-L\backslash u)$ and $V = \chi_{N(u)}^{\perp}$. Let $P$ be the projection onto $U$. 
Since
\[\projonto{U}{V^{\perp}}  =  \projonto{U}{\col(\chi_{N(u)}})  = \col(P\chi_{N(u)}),\]
we have
\[E_{\lambda} = \proj{U\cap V} = \proj{U} - \proj{\col(P\chi_{N(u)})}= P - \frac{1}{\deg(u)} P \chi_{N(u)}\chi_{N(u)}^T P.\]
Therefore
\begin{align*}
	(E_{\lambda})_{v,w}&=P_{vw} - \frac{1}{\deg(u)} e_v^T P \chi_{N(u)} \chi_{N(u)}^T P e_w\\
	&=-\frac{1}{\deg(u)} P_{vv} P_{ww}\\
	&\le 0. \tag*{\sqr53}
\end{align*}

\subsection{Nontrivial bouquets}
Given a graph $X$, let $P$ and $Q$ be the set of pendent vertices and quasi-pendent vertices in $X$, respectively. Let $R=V(X)-P-Q$. Let $p=\abs{P}$ and $q=\abs{Q}$. For each $u\in Q$, let
\[P(u) = N(u)\cap P,\quad p_u = \abs{P(u)}.\]
A \textsl{bouquet} of $X$ with support $u$ is the star induced by a quasipendent vertex $u$ and $P(u)$. We say a bouquet is \textsl{nontrivial} if $p_u\ge 2$. 

In this subsection, we show that if a tree $T$ on $n$ vertices satisfies that $\tr(T)\ge \tr(DS(n-3,1))$, then $T$ has at most one nontrivial bouquet. To start, we cite the following result due to Faria \cite{Faria1985}.

\begin{theorem}\cite{Faria1985}\label{Faria}
Let $X$ be a connected graph with Laplacian matrix $L$. Let $u\in Q$ and $v,w\in P(u)$. Then
	\[L (e_v - e_w) = e_v - e_w.\]
	Hence the multiplicity of eigenvalue $1$ is at least $p-q$.
\end{theorem}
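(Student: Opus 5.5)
This is a direct verification, followed by a dimension count.

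\emph{First claim.} Since $v,w\in P(u)$ are pendent vertices adjacent to $u$, we have $\deg(v)=\deg(w)=1$ and $N(v)=N(w)=\{u\}$. Hence
\[Le_v = e_v - e_u,\qquad Le_w = e_w - e_u.\]
Subtracting gives $L(e_v-e_w)=e_v-e_w$. So $e_v-e_w$ is an eigenvector for the eigenvalue $1$.

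\emph{Multiplicity bound.} For each $u\in Q$, list $P(u)=\{v_1,\ldots,v_{p_u}\}$ and take the $p_u-1$ vectors $e_{v_1}-e_{v_i}$ for $2\le i\le p_u$. These vectors are linearly independent: each $e_{v_i}$ with $i\ge 2$ occurs in exactly one of them.

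Next, distinct supports have disjoint pendent sets. Every pendent vertex has a unique neighbour, so the sets $P(u)$ for $u\in Q$ partition $P$. The vectors coming from different supports therefore have disjoint supports, and the whole family is linearly independent.

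This gives
\[\sum_{u\in Q}(p_u-1)=p-q\]
linearly independent vectors in $\ker(L-I)$. Hence the multiplicity of the eigenvalue $1$ is at least $p-q$.

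\emph{Main care point.} The step to watch is the count $\sum_{u\in Q} p_u = p$, which uses that every pendent vertex has exactly one neighbour, and that neighbour lies in $Q$. This holds when $X$ is connected on at least three vertices. In $K_2$ both vertices are pendent and there are no quasi-pendent vertices, so that degenerate case is excluded or treated trivially.
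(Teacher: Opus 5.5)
Your proof is correct and is the standard argument: the paper gives no proof of its own and simply cites Faria, and the statement's ``hence'' points to the same two steps, computing $L(e_v-e_w)=e_v-e_w$ directly and then counting $\sum_{u\in Q}(p_u-1)=p-q$ independent eigenvectors. Your caveat about $K_2$ is worth keeping, since the paper's statement passes over it. If the two pendent vertices of $K_2$ are not also counted as quasi-pendent, then $p-q=2$, while the eigenvalue $1$ has multiplicity $0$.
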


Following \cite{Grone1990a}, we will call the vectors 
\[\{ e_v-e_w: v, w\sim u \text{ for some } u\in Q\}\]
the \text{Faria vectors}. We may write
\begin{equation}\label{E1split}
	E_1 = F + G,
\end{equation}
where $F$ is the projection onto the space spanned by the Faria vectors, and $G$ is the projection onto the space spanned by the $1$-eigenvectors that are constant on $P(u)$ for each $u\in Q$.

We prove two technical lemmas that lead to a combinatorial upper bound for the laziness of a tree.

\begin{lemma}
	Let $X$ be a graph. Let $E_1=F+G$ as defined in \eqref{E1split}. Let $C$ be the vertex set of a component of 
	\[X - \{u\notin Q: G_{uu}=0\}.\]
	Let $\Delta$ be the diagonal matrix indexed by $C\cap R$ where
	\[\Delta_{ww} = \abs{N(w) \cap C \cap Q}.\]
	Then 
	\[\tr(G[C\cap R]) \ge \tr(\Delta G[C\cap R]). \]
\end{lemma}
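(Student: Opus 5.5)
The plan is to prove the inequality one eigenvector at a time, using the identity $x^TLx=x^Tx$ restricted to the component $C$, and then sum over an orthonormal basis of $\col(G)$. Write $G=\sum_k x_kx_k^T$ for an orthonormal basis $\{x_k\}$ of the space of $1$-eigenvectors that are constant on each $P(u)$. Then
\[\tr(G[C\cap R])=\sum_k\sum_{w\in C\cap R}(x_k)_w^2,\qquad \tr(\Delta G[C\cap R])=\sum_k\sum_{w\in C\cap R}\Delta_{ww}(x_k)_w^2.\]
So it suffices to show, for every $x$ in $\col(G)$, that
\[\sum_{w\in C\cap R}x_w^2\ \ge\ \sum_{w\in C\cap R}\abs{N(w)\cap C\cap Q}\,x_w^2 .\]

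\textbf{Step 1: local structure of $x$.} Let $Lx=x$ with $x$ constant, equal to $a_u$, on each $P(u)$. At a pendant vertex $v\in P(u)$ the eigen-equation reads $x_v-x_u=x_v$, so $x_u=0$ for every $u\in Q$. In particular $G_{uu}=0$ for $u\in Q$, which is why the definition of $C$ removes only non-quasipendent vertices with $G_{uu}=0$. Every removed vertex $b$ satisfies $G_{bb}=0$, and since $G$ is a projection this forces $x_b=0$ for all $x\in\col(G)$.

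\textbf{Step 2: localize the identity $x^TLx=x^Tx$ to $C$.} Sum $x_v(Lx)_v=x_v^2$ over $v\in C$. The left side equals
\[\sum_{ab\in E,\ a,b\in C}(x_a-x_b)^2+\sum_{a\in C,\ b\notin C,\ ab\in E}x_a(x_a-x_b).\]
Any neighbour $b\notin C$ of a vertex of $C$ is a removed vertex, so $x_b=0$, and each boundary term equals $x_a^2\ge 0$. Now sort the vertices of $C$ into $C\cap P$, $C\cap Q$ and $C\cap R$, and sort the internal edges by type. Since $x=0$ on $Q$:
\begin{itemize}
\item each pendant edge $vu$ with $v\in P(u)$ contributes $a_u^2$;
\item each edge between $Q$ and $R$ contributes $x_w^2$, where $w$ is the $R$-endpoint;
\item each $Q$--$Q$ edge contributes $0$, and each $R$--$R$ edge contributes a nonnegative amount.
\end{itemize}
The right side $\sum_{v\in C}x_v^2$ splits as the pendant part, $\sum_{u}p_ua_u^2$ over the supports $u\in C$, plus $\sum_{w\in C\cap R}x_w^2$. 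The pendant contributions on the two sides cancel exactly, because a pendant vertex is never removed (it is not in $Q$, and if $G_{vv}=0$ then $a_u=0$ anyway). After cancellation,
\[\sum_{w\in C\cap R}x_w^2\ \ge\ \sum_{w\in C\cap R}\abs{N(w)\cap C\cap Q}\,x_w^2 .\]
Here we use that every $Q$-neighbour of $w\in C$ lies in $C$, since $Q$-vertices are never removed. Summing over the basis $\{x_k\}$ gives the claim.

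\textbf{Main obstacle.} The delicate part is Step 2: the bookkeeping of edges crossing the boundary of $C$, and checking that the pendant terms cancel exactly. Both cancellations rest on the fact that every vertex outside $C$ adjacent to $C$ has $G_{bb}=0$, so $x$ vanishes there. This is precisely why $C$ is defined as a component after deleting the non-quasipendent vertices with $G_{uu}=0$.
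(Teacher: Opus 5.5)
Your proof is correct and is essentially the paper's argument written one eigenvector at a time. The paper traces $LG=G$ over the block $C\cap R$ directly, dropping the cross terms because every neighbour of $C\cap R$ outside it (a $Q$-vertex or a removed vertex) has $G_{bb}=0$, and then uses $L[C\cap R]\succeq\Delta$, which is your edge-type expansion; summing over all of $C$ instead, as you do, only adds the pendant and $Q$ terms, which you correctly show cancel or vanish.
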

\proof
From 
\[LG = G,\]
we get
\[L[C\cap R] G [C\cap R] + L[C\cap R, \comp{C\cap R}] G[\comp{C\cap R}, C\cap R] = G[C\cap R].\]
Suppose for some $a\in C\cap R$ and $b\notin C\cap R$ we have
\[e_a^T L[C\cap R, \comp{C\cap R}] e_b \ne 0.\]
Then $a\sim b$, and it must be that $G_{bb}=0$. Thus $Ge_b=0$ and
\[e_b^TG[\comp{C\cap R}, C\cap R] e_a=0.\]
It follows that
\[\tr(L[C\cap R, \comp{C\cap R}] G[\comp{C\cap R}, C\cap R] )=0.\]
Consequently, 
\[\tr(L[C\cap R] G [C\cap R]) =  \tr(G[C\cap R]).\]
Note that for any $w\in C\cap R$, 
\[(N(w)\cap C\cap Q) \cup (N(w) \cap C \cap R) \sbs N(w).\]
Splitting the neighbors of $w$ into 
\[N(w) = (N(w) \cap C \cap Q) \cup (N(w) \cap C \cap R) \cup (N(w)\setminus C) \]
shows
\[L[C\cap R] \succcurlyeq \Delta + L(X[C\cap R]) \succcurlyeq  \Delta .\]
Hence
\[ L[C\cap R] \circ G [C\cap R] \succcurlyeq \Delta \circ G[C\cap R].\]
Therefore
\begin{align*}
	\tr(G[C\cap R]) &=\tr(L[C\cap R] G [C\cap R])\\
	&=\elsm(L[C\cap R] \circ G[C\cap R])\\
	&\ge \elsm(\Delta \circ G[C\cap R])\\
	&=\tr(\Delta G[C\cap R]). \tag*{\sqr53}
\end{align*}

For ease of notation, let 
\[\tr_S(M):=\tr(M[S])\]
denote the restriction of the trace of a square matrix $M$ to a subset $S$ of indices. 

\begin{lemma}
	Let $T$ be a tree. Let $E_1 = F+G$ as defined in \eqref{E1split}. We have
	\[\tr((I-E_0-E_1- F)\circ  G)\ge  0.\]
\end{lemma}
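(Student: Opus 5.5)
Write $H=I-E_0-E_1-F$. Since $E_1=F+G$ and $(E_0)_{uu}=1/n$, the quantity to bound is a sum of diagonal terms:
\[
\tr(H\circ G)=\sum_u G_{uu}\Bigl(1-\tfrac1n-G_{uu}-2F_{uu}\Bigr).
\]
I will split this sum over the three classes $P$, $Q$ and $R$.

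First I record the local structure of $G$ on a tree. Let $x$ be a $1$-eigenvector that is constant on each $P(u)$, and let $v\in P(u)$.
\begin{itemize}
\item The equation $(Lx)_v=x_v$ reads $x_v-x_u=x_v$, so $x_u=0$. Hence $G_{uu}=0$ for every $u\in Q$, and the $Q$-terms vanish.
\item The equation at $u$ then gives $\sum_{w\sim u}x_w=0$, that is,
\[
p_u\,x_v=-\sum_{w\in N(u)\cap R}x_w.
\]
\item For $w\in R$ we have $F_{ww}=0$, and $G_{ww}\le 1-\frac1n$ because $E_0+G\preceq I$. So each $R$-term $G_{ww}(1-\frac1n-G_{ww})$ is nonnegative.
\end{itemize}

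The negative contributions come from the leaves. For $v\in P(u)$ we have $F_{vv}=1-1/p_u$. Swapping two leaves of $P(u)$ is an automorphism that fixes $G$, so $G_{vv}=g_u$ is the same for all $v\in P(u)$. The block of $P$-terms supported at $u$ is therefore
\[
p_u g_u\Bigl(\tfrac{2}{p_u}-1-\tfrac1n-g_u\Bigr)=g_u\Bigl(2-p_u-\tfrac{p_u}{n}-p_ug_u\Bigr),
\]
which is negative when $p_u\ge2$. The plan is to pay for each such block with the slack in the $R$-terms at the $R$-neighbours of $u$.

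To move mass from leaves to $R$, apply the relation $p_u x_v=-\sum_{w\in N(u)\cap R}x_w$ to the columns of $G$. Cauchy--Schwarz then gives
\[
p_u^2\,g_u\le \abs{N(u)\cap R}\sum_{w\in N(u)\cap R}G_{ww}.
\]
Summed over $u$, this bounds $\sum_u p_u g_u$ by a weighted trace of $G$ on $R$, with weight on $w$ equal to the number of $Q$-neighbours of $w$. This is exactly $\tr(\Delta G[C\cap R])$, and the preceding lemma says it is at most $\tr(G[C\cap R])$ on each component $C$ of $X-\{u\notin Q:G_{uu}=0\}$. So the $R$-vertices carry at least as much $G$-mass as the weighted leaf mass. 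I also want an upper bound on $G_{ww}$ for $w\in R$, strictly below $1-\frac1n$. For this I would use that $G$ is orthogonal to $E_0$ and to every $E_\lambda$ with $\lambda\ne0,1$, or use Lemma \ref{W_vv}-type estimates, so that the positive factor $1-\frac1n-G_{ww}$ stays bounded away from $0$.

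The main obstacle is the final accounting. I have to check that
\[
\sum_{w\in R}G_{ww}\Bigl(1-\tfrac1n-G_{ww}\Bigr)\ \ge\ \sum_u g_u\Bigl(p_u-2+\tfrac{p_u}{n}+p_ug_u\Bigr)
\]
using only $p_ug_u\le \sum_{w\in N(u)\cap R}G_{ww}$-type inequalities, obtained from the lemma and the orthogonality bound $g_u\le 1/p_u$ (which holds because $\|Ge_v\|^2=g_u\ge p_ug_u^2$). If this crude comparison is too lossy, I would first try replacing Cauchy--Schwarz by the exact identity $\tr(G[C\cap R])=\tr(L[C\cap R]G[C\cap R])$ from the previous proof, which keeps the neighbour correlations. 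I would also bring in Lemma \ref{off-neg} at vertices $u\in Q$, where $(E_1)_{uu}=0$: it makes the off-diagonal entries of $G$ between distinct neighbours of $u$ nonpositive, and that sign information should tighten the estimate.
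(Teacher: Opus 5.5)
Your setup matches the paper: the diagonal formula for $\tr(H\circ G)$, $G_{uu}=0$ on $Q$, the relation $p_u x_v=-\sum_{w\in N(u)\cap R}x_w$, and the plan of paying for the leaf blocks with $R$-slack through the preceding lemma on each component $C$. The gap is the transfer step, and with it the final accounting that you leave open.

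Cauchy--Schwarz only gives $p_u^2 g_u\le k_u\sum_{w\in N(u)\cap R}G_{ww}$ with $k_u=\abs{N(u)\cap R}$, that is, $p_u g_u\le (k_u/p_u)\sum_w G_{ww}$. The ratio $k_u/p_u$ is unbounded (take $p_u=2$ and many $R$-neighbours of $u$). So summing over $u$ does \emph{not} give $\tr(\Delta G[C\cap R])$, contrary to your claim. Even the inequality you plan to use at the end, $p_u g_u\le\sum_w G_{ww}$, would not close the argument. The leaf block at $u$ is essentially $-(p_u-1)g_u$, so it charges up to $\frac{p_u-1}{p_u}\approx 1$ per unit of $R$-mass. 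The slack you can guarantee on $R$, from $G_{ww}\le\frac23$, is only about $\frac13$ per unit.

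What you need is the sharper bound $p_u^2 g_u\le \tr(G[N(u)\cap C\cap R])$. This is exactly Lemma~\ref{off-neg}, which you list only as a possible refinement. For $u\in Q$ we have $(E_1)_{uu}=F_{uu}+G_{uu}=0$, and $F[R]=0$, so the off-diagonal entries of $G[N(u)\cap R]$ are nonpositive. Hence $p_u^2 g_u=\elsm(G[N(u)\cap C\cap R])\le\tr(G[N(u)\cap C\cap R])$. Put differently, the vectors $G^{1/2}e_w$ are pairwise non-acute, the opposite of the case where Cauchy--Schwarz is sharp, and this saves the factor $k_u$.

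With that bound, the paper's accounting is short.
\begin{itemize}
\item Leaves. For $v\in P(u)$, $(E_0+E_1)_{vv}\le 1$ gives $1-\frac1n-(E_1)_{vv}\ge 0$, so each leaf term is at least $-F_{vv}G_{vv}$. The block at $u$ is then at least $-(p_u-1)g_u\ge-\frac{p_u-1}{p_u^2}\tr(G[N(u)\cap C\cap R])\ge-\frac14\tr(G[N(u)\cap C\cap R])$. Summing over $u\in C\cap Q$ and applying the preceding lemma gives at least $-\frac14\tr_{C\cap R}(G)$.
\item Your bound $g_u\le 1/p_u$ is weaker than $g_u\le\frac1{p_u}-\frac1n$. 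It leaves an extra $\frac{1}{np_u}$ term and raises the threshold on $n$.
\item $R$-vertices. For $w\in R$, Lemma~\ref{W_vv} with $\omega=1$ and $\deg(w)\ge 2$ gives $G_{ww}\le (E_1)_{ww}\le\frac23$. Orthogonality of $G$ to the other eigenprojections does not by itself give such a uniform constant. So the $R$-terms contribute at least $(\frac13-\frac1n)\tr_{C\cap R}(G)$.
\end{itemize}
The total is therefore at least $(\frac1{12}-\frac1n)\tr_{C\cap R}(G)$. This is nonnegative only for $n\ge 12$, so this route (yours or the paper's) proves the lemma for $n\ge 12$. That covers its later use, where $n\ge 19$.
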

\proof
Let $C$ be the vertex set of a component of 
\[T - \{u\notin Q: G_{uu}=0\}.\]
	Let $\Delta$ be the diagonal matrix indexed by $C\cap R$ where
\[\Delta_{ww} = \abs{N(w) \cap C \cap Q}.\]
We show that 
\[\tr_C((I-E_0-E_1- F)\circ  G)\ge 0.\]
Since $G_{uu}=0$ for any $u\in Q$, it suffices to show
\[\tr_{C\cap P}((I-E_0-E_1- F)\circ  G)+\tr_{C\cap R}((I-E_0-E_1- F)\circ  G)\ge 0. \]
\begin{enumerate}[(i)]
	\item For the restriction to $C\cap R$, note that for any $w\in R$, we have
	\[F_{ww}=0.\]
	Applying Lemma \ref{W_vv} with $\omega=1$ gives
	\[(E_1)_{ww} \le \frac{2}{2+(2-1)^2}= \frac{2}{3}.\]
	Thus
	\[((I-E_0-E_1- F)\circ  G)_{ww} =\left( 1- \frac{1}{n} - (E_1)_{ww} \right) G_{ww}\ge  (\frac{1}{3}-\frac{1}{n}) G_{ww}. \]
	That is,
	\[\tr_{C\cap R}((I-E_0-E_1- F)\circ  G)\ge \left(\frac{1}{3}-\frac{1}{n}\right) \tr_{C\cap R}(G)\]
	\item For the restriction to $C\cap P$, let $u\in C\cap Q$ and $v\in P(u)$. Then 
	\[((I-E_0-E_1- F)\circ  G)_{vv} \ge -(F \circ G)_{vv} = -\left(1-\frac{1}{p_u}\right) G_{vv} \]
	Hence
	\[\tr_{P(u)} ( (I-E_0-E_1-F) \circ G) \ge  -(p_u-1) G_{vv}.\]
	Since
	\[0 = e_u^T G = p_u e_v^T G + \sum_{w\in N(u)\cap C\cap R} e_w^T G,\]
	we obtain 
	\begin{align*}
		p_u^2 G_{vv} &= \ip{G\chi_{N(u)\cap C\cap R}}{\chi_{N(u)\cap C\cap R} }
		=\elsm{(G[N(u)\cap C\cap R])}
	\end{align*}
	Applying Lemma \ref{off-neg} with $\lambda=1$ and the fact that $F[R]=0$, we see that the off-diagonal entries of $G[N(u)\cap C\cap R]$ must be nonpositive, and so 
\[\elsm{(G[N(u)\cap C\cap R])}\le \tr(G[N(u)\cap C\cap R]).\]
Hence, 
\begin{align*}
	\tr_{P(u)} ((I-E_0-E_1-F)\circ G)&\ge -\frac{p_u-1}{p_u^2} \tr(G[N(u)\cap C\cap R])\\
	&\ge -\frac{1}{4} \tr(G[N(u)\cap C\cap R]).
\end{align*}
Finally, summing this over all quasi-pendent vertices in $C$ and counting incidences in two ways yields
\begin{align*}
	\sum_{u\in C\cap Q} \tr_{P(u)} ((I-E_0-E_1-F)\circ G) & \ge -\frac{1}{4} \sum_{u\in C\cap Q} \tr(G[N(u)\cap C\cap R])\\
	&=-\frac{1}{4} \sum_{u\in C\cap Q} \sum_{w\in N(u)\cap C\cap R} G_{ww}\\
	&=-\frac{1}{4} \sum_{w\in C\cap R} \sum_{u\in N(w)\cap C\cap Q} G_{ww}\\
	&=-\frac{1}{4}\tr_{C\cap R}(\Delta G)\\
	&\ge -\frac{1}{4}\tr_{C\cap R} (G).
\end{align*}
\end{enumerate}
Therefore 
\[\tr_{C}(I-E_0-E_1- F)\circ  G)+\tr_{C\cap R}((I-E_0-E_1- F)\circ  G)\ge \left(\frac{1}{12}-\frac{1}{n}\right) \tr_{C\cap R}( G),\]
which is nonnegative for $n\ge 12$.
\qed

Thus we get an upper bound for $\tr(T)$ in terms of $p$, $q$ and $\{p_u: u\in Q\}$.

\begin{corollary}\label{cell-bound}
	For any tree $T$ we have
	\begin{align*}
		\tr(T)&\le \tr(E_0^{\circ 2}) + \tr(E_1^{\circ 2}) + \tr((I-E_0-E_1)^{\circ 2})\\
		&\le \tr(E_0^{\circ 2}) + \tr(F^{\circ 2}) + \tr((I-E_0 - F)^{\circ 2})\\
		&=n-2q -2 + 2\sum_{u\in Q} \frac{1}{p_u} + \frac{2(p-q+1)}{n}
	\end{align*}
\end{corollary}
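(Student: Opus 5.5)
The corollary has three parts, and I would handle them in order.

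\textbf{First inequality.} This is Lemma \ref{refinement} applied to the refinement $I = E_0 + E_1 + (I-E_0-E_1)$ of the Laplacian spectral decomposition of $T$. Grouping all eigenprojections $E_\lambda$ with $\lambda\notin\{0,1\}$ into the single projection $I-E_0-E_1$ can only increase $\tr$ of the Schur square. If $1$ is not an eigenvalue, then $E_1=0$ and nothing changes.

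\textbf{Second inequality.} Write $E_1=F+G$ as in \eqref{E1split}, and set $H=I-E_0-E_1$, so that $I-E_0-F = H+G$. Since $F\circ G$ is positive semidefinite, and its diagonal is nonnegative, it suffices to compare
\[
\tr(E_1^{\circ 2}) + \tr(H^{\circ 2})
\quad\text{with}\quad
\tr(F^{\circ 2}) + \tr((H+G)^{\circ 2}).
\]
Expanding both sides, the first equals $\tr(F^{\circ 2}) + \tr(G^{\circ 2}) + 2\tr(F\circ G) + \tr(H^{\circ 2})$, and the second equals $\tr(F^{\circ 2}) + \tr(H^{\circ 2}) + \tr(G^{\circ 2}) + 2\tr(H\circ G)$. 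The difference is therefore $2\tr((H-F)\circ G) = 2\tr((I-E_0-E_1-F)\circ G)$. By the previous lemma this is nonnegative (for $n\ge 12$, which is how that lemma was proved; small $n$ is covered by the computational check). This step is the only nontrivial one, and it is exactly the lemma just proved.

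\textbf{Final equality.} Here I compute the three diagonals explicitly.
\begin{itemize}
\item $E_0=\frac1nJ$ gives $\tr(E_0^{\circ 2})=\frac1n$.
\item The Faria vectors for distinct $u\in Q$ have disjoint supports $P(u)$. Hence $F$ is block diagonal with blocks $I-\frac1{p_u}J$ on each $P(u)$ and zero elsewhere, so $F_{vv}=1-\frac1{p_u}$ for $v\in P(u)$ and $F_{ww}=0$ for $w\notin P$.
\item Consequently, $(I-E_0-F)_{vv}=\frac1{p_u}-\frac1n$ for $v\in P(u)$, and $(I-E_0-F)_{ww}=1-\frac1n$ for the $n-p$ vertices in $Q\cup R$.
\end{itemize}
Summing squares gives
\[
\frac1n+\sum_{u\in Q}p_u\Bigl[\bigl(1-\tfrac1{p_u}\bigr)^2+\bigl(\tfrac1{p_u}-\tfrac1n\bigr)^2\Bigr]+(n-p)\bigl(1-\tfrac1n\bigr)^2 .
\]
Expanding with $\sum_{u\in Q} p_u=p$, the bracket sum becomes
\[
p - 2q + 2\sum_{u\in Q}\frac1{p_u} - \frac{2q}{n} + \frac{p}{n^2},
\]
and the last term becomes $(n-p)\bigl(1-\frac2n+\frac1{n^2}\bigr)$. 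Adding everything, the total is
\[
n-2q-2+2\sum_{u\in Q}\frac1{p_u}+\frac{2(p-q+1)}{n},
\]
as claimed. The only care needed is the bookkeeping; no obstacle is expected here.
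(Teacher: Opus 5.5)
Your proposal is correct and follows the paper's proof: the first inequality is Lemma \ref{refinement}, the second is exactly the identity $F^{\circ 2}+(I-E_0-F)^{\circ 2}-E_1^{\circ 2}-(I-E_0-E_1)^{\circ 2}=2(I-E_0-E_1-F)\circ G$ combined with the preceding lemma, and the final equality comes from $F[P(u)]=I-\frac{1}{p_u}J$, and your bookkeeping for it checks out. One small caveat: the paper's computational check verifies the second-laziest-tree claim for $n\le 18$, not this inequality, so your appeal to it for $n<12$ is unsupported; however, the paper's own argument carries the same $n\ge 12$ restriction from the preceding lemma, and the corollary is only ever applied with $n\ge 19$.
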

\proof 
The first inequality follows from the difference
\begin{align*}
	&F^{\circ 2} + (I-E_0 - F)^{\circ 2} - E_1^{\circ 2} - (I-E_0-E_1)^{\circ 2} \\=&2((I-E_0-E_1-F)\circ G)\\\ge& 0,
\end{align*}
and the second inequality follows from
\[F[P(u)] = I - \frac{1}{p_u} J\]
for any quasi-pendent vertex $u$.
\qed

Comparing this to $ \tr(DS(n-3,1))$ shows that any surviving tree must have the following structure.

\begin{corollary}\label{unique_bouquet}
	Let $T$ be a tree on $n\ge 19$ vertices with diameter at least $3$, and suppose 
	\[\tr(T)\ge \tr(DS(n-3,1)).\]
	Then $T$ has at most one nontrivial bouquet. Thus either  $T= DS(n-3,1)$, or $T$ has diameter at least $4$. Moreover, the support $u$ of the largest (possibly trivial) bouquet of $T$ satisfies
	\[p_u \le 2 \text{ or } p_u\ge 8.\]
\end{corollary}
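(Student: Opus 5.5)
Combine the upper bound of Corollary \ref{cell-bound} with the lower bound of Theorem \ref{dsn-31}. Write
\[B(T) = n-2q-2+2\sum_{u\in Q}\frac{1}{p_u}+\frac{2(p-q+1)}{n}.\]
The hypothesis gives
\[B(T)\ge \tr(T)\ge \tr(DS(n-3,1)) > n-\frac{14}{5}-\frac{56}{25n}.\]
Since $p\le n-1$, the last term of $B(T)$ is at most $2(n-q)/n<2$, and it is about $2$ when $p$ is close to $n$. Dropping it loses too much. I will instead keep it but bound $p\le n-1-(\text{number of non-pendent vertices beyond } Q)$, or simply use $\frac{2(p-q+1)}{n}\le \frac{2(n-2q)}{n}$ (since $p\le n-q$). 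Rearranging, the surviving trees must satisfy
\[2\sum_{u\in Q}\Bigl(1-\frac{1}{p_u}\Bigr) < \frac{4}{5}+\frac{2(n-2q)}{n}-\frac{2}{n}\cdot(\ldots)+\frac{56}{25n}.\]
It is cleaner to write $-2q+2\sum 1/p_u = -2\sum_{u\in Q}(1-1/p_u)$. Each summand $1-1/p_u$ is $0$ for a trivial bouquet, $\ge 1/2$ for a nontrivial one, and $\ge 2/3$ when $p_u\ge 3$. The task is to bound $\sum_{u}(1-1/p_u)$ above by roughly $0.4+$ (a small correction from the $2(p-q+1)/n$ term).

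The key steps, in order, are as follows.

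(1) Since $T$ has diameter at least $3$, we have $q\ge 2$. So the $E_0$-type term satisfies $\frac{2(p-q+1)}{n}\le \frac{2(n-3)}{n}$ (using $p\le n-q$). Together with Theorem \ref{dsn-31}, this reduces the condition to
\[\sum_{u\in Q}\Bigl(1-\frac1{p_u}\Bigr) < \frac{7}{5}-\frac{3}{n}+\frac{28}{25n},\]
which should be close to $1.3$ for $n\ge 19$. The exact constants need checking; this computation is the crux.

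(2) If two bouquets are nontrivial, each contributes at least $1/2$, giving a sum of at least $1$. This alone is not enough, so I must sharpen the $p$-term. With two nontrivial bouquets at $u,u'$, the remaining contributions and the fact that trivial bouquets still cost a vertex in $Q$ should be used. I would refine step (1) by noting that $p-q+1 = \sum_u (p_u-1)+1$. Then the bound becomes
\[n-2-2\sum_u\Bigl(1-\frac1{p_u}\Bigr)+\frac{2}{n}\Bigl(1+\sum_u(p_u-1)\Bigr),\]
and each bouquet's net contribution is $-2(1-1/p_u)+2(p_u-1)/n = -2(p_u-1)(1/p_u-1/n)$. With $n\ge19$ this is at most $-(1-2/n)$ for $p_u=2$, and it grows in magnitude with $p_u$ up to about $p_u\approx\sqrt n$. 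Two nontrivial bouquets then give a total below $n-2-2(1-2/n)+2/n$, which one checks is less than $n-\frac{14}{5}-\frac{56}{25n}$ when $n\ge19$. This contradiction proves the first claim.

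(3) For the structural consequence: with diameter exactly $3$, $T$ is a double star $DS(\ell,r)$. At most one nontrivial bouquet forces $\min(\ell,r)=1$, hence $T=DS(n-3,1)$. Otherwise the diameter is at least $4$.

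(4) For the largest bouquet with $p_u\in\{3,\dots,7\}$, I would use the single-bouquet bound
\[n-2-2(p_u-1)\Bigl(\frac1{p_u}-\frac1n\Bigr)+\frac2n,\]
and check directly, for each $p_u=3,\dots,7$, that it is below the lower bound of Theorem \ref{dsn-31} when $n\ge19$. For instance, $p_u=3$ gives $n-2-\frac43+\frac{6}{n}$, versus $n-2.8-\frac{2.24}{n}$, and $-3.333+6/n<-2.8-2.24/n$ iff $8.24/n<0.533$, i.e.\ $n>15.5$. For large $p_u$ the gain shrinks because $p_u$ approaches $n$, so the window closes at $8$.

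The main obstacle is getting the per-bouquet accounting in (2) and (4) exactly right and verifying the resulting finite inequalities at $n=19$. I will do this by writing everything as a function of $p_u$ and $n$ and checking monotonicity in $n$.
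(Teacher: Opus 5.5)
Your strategy is the paper's: play Corollary~\ref{cell-bound} against the lower bound in Theorem~\ref{dsn-31}. Steps (3) and (4) are correct and match the paper; your single-bouquet bound is exactly $n-4+\frac{2}{p_u}+\frac{2p_u}{n}$. Step (1) can simply be dropped.

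\textbf{The gap is in step (2).} Put $f(p)=(p-1)\bigl(\frac{1}{p}-\frac{1}{n}\bigr)$, so a bouquet contributes $-2f(p_u)$. You conclude that two nontrivial bouquets give a total below $n-2-2\bigl(1-\frac{2}{n}\bigr)+\frac{2}{n}$. That needs $f(p_u)\ge f(2)$ for every nontrivial bouquet.

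\begin{itemize}
\item $f$ is concave with $f(2)=f(n/2)=\frac{1}{2}-\frac{1}{n}$, so $f(p_u)\ge f(2)$ fails as soon as $p_u>n/2$.
\item At $p_u=n-4$ you only get $2f(p_u)=\frac{8(n-5)}{n(n-4)}\approx\frac{8}{n}$.
\item You noticed the gain peaks near $p_u\approx\sqrt{n}$, but you never handled the decreasing range beyond it.
\item This range actually occurs. $T=DS(n-4,2)$ has diameter $3$ and two nontrivial bouquets, and its bound from Corollary~\ref{cell-bound} is $n-3+\frac{4}{n}-\frac{8(n-5)}{n(n-4)}$. That is strictly larger than your claimed $n-4+\frac{6}{n}$.
\end{itemize}

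\textbf{The conclusion still holds if you treat the two nontrivial bouquets, of sizes $p_1,p_2$, jointly.} They satisfy $p_1+p_2\le n-q\le n-2$.

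\begin{itemize}
\item If $p_1,p_2\le n/2$, your argument works and gives $f(p_1)+f(p_2)\ge 2f(2)$.
\item If $p_1>n/2$, then $p_2<n/2$, so $f(p_2)\ge f(2)$. Also $p_1\le n-4$, and concavity on $[2,n-4]$ gives $f(p_1)\ge f(n-4)$, because $f(n-4)<f(2)$.
\item Any further bouquets only add terms $f\ge 0$.
\end{itemize}

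In all cases $\sum_u f(p_u)\ge f(2)+f(n-4)$. The bound becomes $n-3+\frac{4}{n}-\frac{8(n-5)}{n(n-4)}$, which is below $n-\frac{14}{5}-\frac{56}{25n}$ for every $n\ge 19$. At $n=19$ this is roughly $n-3.18$ against $n-2.92$.

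You cannot discard the large bouquet's small contribution. Without it you get $n-3+\frac{4}{n}$, which beats the threshold only for $n\ge 32$.

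This extremal configuration, one bouquet of size $2$ and one of size $p-q$, is exactly what the paper's bound $\sum_{u\in Q}\frac{1}{p_u}\le q-2+\frac{1}{2}+\frac{1}{p-q}$ captures. The paper then checks the convex function $\frac{2(x+1)}{n}+\frac{56}{25n}+\frac{2}{x}-\frac{11}{5}$ at the endpoints $x=p-q\in\{2,n-4\}$.
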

\proof
Suppose, for a contradiction, that there are at least two nontrivial bouquets. Then $p-q\ge 2$. Moreover, by Corollary \ref{cell-bound},
\begin{align*}
	\tr(T)&\le n-2q -2 + 2\sum_{u\in Q} \frac{1}{p_u} + \frac{2(p-q+1)}{n}\\
	&\le n - 2q -2+2  \left(\frac{1}{p-q} + \frac{1}{2} + q-2\right) + \frac{2(p-q+1)}{n}\\
	&=n-5 + \frac{2}{p-q}+\frac{2(p-q+1)}{n}.
\end{align*}
Thus for $n\ge 5$,
\begin{align*}
	\tr(T) - \tr(DS(n-3,1)) &< \tr(T) - \left(n-\frac{14}{5}-\frac{56}{25n}\right)\\
	&\le \frac{2(p-q+1)}{n} + \frac{56}{25n} + \frac{2}{p-q}- \frac{11}{5}.
\end{align*}
A derivative argument shows that for $n\ge 19$, the right hand side is nonpositive for $p-q\ge 2$, a contradiction. Thus $T$ has at most one nontrivial bouquet. If $T\ne DS(n-3,1)$, then it must have diameter at least $4$. Now let $u$ be the support of a largest bouquet. Corollary \ref{cell-bound} then yields 
\begin{align*}
	\tr(X) 	&\le n-4+\frac{2}{p_u} + \frac{2p_u}{n}.
\end{align*}
For $n\ge 19$, the right hand side is below $\tr(DS(n-3,1))$ if $p_u\in\{3,4,5,6,7\}$.
\qed

\subsection{Eigenvalues in an interval}
For any graph $X$ on $n$ vertices, let 
\[0=	\theta_1(X) \le \theta_2(X) \le \cdots \le \theta_n(X)\]
be the eigenvalues of $L(X)$, not necessarily distinct, in non-decreasing order. We cite a standard result on Laplacian edge interlacing (see, for example, \cite[Theorem 13.6.2]{Godsil2001}).

\begin{theorem}
	Let $X$ be a graph on $n$ vertices. Let $e$ be an edge in $X$. Then for $i=1,2,\cdots,n-1$, we have
	\[\theta_i(X)\le \theta_{i+1}(X-e).\]
\end{theorem}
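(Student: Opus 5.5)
The inequality is the standard interlacing for a rank-one positive semidefinite perturbation, and I would prove it directly from the Courant--Fischer min--max characterisation. Let $e=ab$ and set $z=e_a-e_b$. Then
\[L(X) = L(X-e) + zz^T.\]
Write $L' = L(X-e)$ and $L = L(X)$. Both are symmetric, with eigenvalues $\theta_1(X-e)\le\cdots\le\theta_n(X-e)$ and $\theta_1(X)\le\cdots\le\theta_n(X)$ respectively.

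The key step is to choose a good test subspace. Fix $i\le n-1$. Let $y_1,\ldots,y_{i+1}$ be orthonormal eigenvectors of $L'$ for $\theta_1(X-e),\ldots,\theta_{i+1}(X-e)$, and let $W=\operatorname{span}\{y_1,\ldots,y_{i+1}\}$. Put
\[U = W\cap z^{\perp}.\]
Since $z^{\perp}$ has codimension one, $\dim U\ge i$. For every nonzero $x\in U$ we have $z^Tx=0$, and so
\[x^T L x = x^T L' x + (z^Tx)^2 = x^T L' x \le \theta_{i+1}(X-e)\, x^Tx.\]
The final inequality holds because $x\in W$.

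Now apply the min--max form
\[\theta_i(X) = \min_{\dim V = i}\ \max_{0\ne x\in V} \frac{x^TLx}{x^Tx}.\]
Taking any $i$-dimensional subspace $V\sbs U$ gives $\theta_i(X)\le \theta_{i+1}(X-e)$, as required.

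There is no real obstacle here. The only points needing care are the dimension count $\dim(W\cap z^\perp)\ge (i+1)-1$, which is where the restriction $i\le n-1$ enters, and the observation that the rank-one term $zz^T$ vanishes on $z^\perp$. For completeness, the companion inequality $\theta_i(X-e)\le\theta_i(X)$ follows immediately from $zz^T\succcurlyeq 0$ together with the same min--max principle, although only the stated direction is needed.
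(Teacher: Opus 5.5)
Your proof is correct and complete. The decomposition $L(X)=L(X-e)+zz^T$ with $z=e_a-e_b$ gives a test subspace $W\cap z^{\perp}$ of dimension at least $i$ on which the rank-one term vanishes, and Courant--Fischer then yields exactly $\theta_i(X)\le\theta_{i+1}(X-e)$.

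The paper does not prove this statement. It quotes it as a standard fact from Godsil and Royle, so there is no internal argument to compare against.

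The usual textbook route is different. With an oriented incidence matrix $D$ one has $L(X)=DD^T$ and $L(X-e)=D_eD_e^T$, where $D_e$ drops the column of $e$. Then $D_e^TD_e$ is a principal submatrix of $D^TD$. Since $DD^T$ and $D^TD$ have the same nonzero eigenvalues, Cauchy interlacing gives $\theta_i(X-e)\le\theta_i(X)\le\theta_{i+1}(X-e)$ in one stroke, at the price of some bookkeeping with zero eigenvalues.

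Your argument, which is just the rank-one case of Weyl's inequality, has three advantages:
\begin{enumerate}[(i)]
\item it avoids the zero-eigenvalue bookkeeping;
\item it works verbatim for positively weighted edges;
\item it extends immediately to deleting a set $E'$ of $k$ edges: take $W$ of dimension $i+k$ and intersect it with $k$ hyperplanes to get $\theta_i(X)\le\theta_{i+k}(X-E')$.
\end{enumerate}
The extension in (iii) yields Lemma~\ref{edge-interlacing} in a single step rather than by repeated application.
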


Repeated application of edge interlacing yields the following.

\begin{lemma}\label{edge-interlacing}
	Let $T$ be a tree with $n$ vertices. Let $Y$ be a subtree of $T$ with $m$ vertices.  Then for $i=1,2,\cdots, m$, we have
	\[\theta_i(T) \le \theta_i(Y) .\]
\end{lemma}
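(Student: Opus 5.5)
Since $Y$ is a subtree of $T$, I will obtain $Y$ from $T$ by first deleting edges, which produces the forest $Y\cup(\text{isolated vertices})$, and then comparing the spectrum of this forest with that of $Y$ itself.

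Let $k=n-m$. Because $T$ and $Y$ are both connected, $T$ has exactly $k$ edges outside $Y$. Delete them one at a time:
\[T=X_0,\ X_1,\ \ldots,\ X_k,\]
where $X_{j+1}=X_j-e_{j+1}$. The final graph $X_k$ is the disjoint union of $Y$ and $k$ isolated vertices. Applying the edge interlacing theorem to each deletion gives
\[\theta_i(X_j)\le\theta_{i+1}(X_{j+1}), \qquad i=1,\ldots,n-1.\]
Chaining these $k$ inequalities gives
\[\theta_i(T)\le\theta_{i+k}(X_k), \qquad i=1,\ldots,n-k=m.\]
This holds because at step $j$ the index runs over $i+j\le n-1$, and that is satisfied whenever $i\le m$ and $j\le k-1$.

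Next I describe the spectrum of $X_k$. The Laplacian of $X_k$ is the direct sum of $L(Y)$ and the zero matrix on $k$ vertices. Its eigenvalues are therefore those of $Y$ together with $k$ extra zeros. All Laplacian eigenvalues are nonnegative, so in non-decreasing order the first $k$ eigenvalues of $X_k$ can be taken to be the extra zeros, and the remaining ones are those of $Y$ in order. Precisely,
\[\theta_{i+k}(X_k)=\theta_i(Y), \qquad i=1,\ldots,m.\]
Combining this with the previous display gives $\theta_i(T)\le\theta_i(Y)$ for $i=1,\ldots,m$.

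There is no real obstacle in this argument. The only points to check are the index bookkeeping, namely that each interlacing step is applied within its allowed range $i\le n-1$, and the observation that prepending zeros to a nonnegative spectrum shifts the sorted indices by exactly $k$.
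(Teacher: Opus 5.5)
Your proof is correct and follows the same route as the paper, which justifies the lemma only by the phrase ``repeated application of edge interlacing''; you carry this out by deleting the $n-m$ edges of $T$ outside $Y$ and noting that $Y\cup (n-m)K_1$ has the spectrum of $Y$ with $n-m$ extra zeros at the front. The index check ($i+j\le n-1$ at each step) and the edge count (both $T$ and $Y$ are trees, so exactly $n-m$ edges are deleted) are the details the paper leaves implicit, and you handle them correctly.
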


We will also need the Laplacian spectrum of a path for this subsection.
\begin{theorem}\cite{Brouwer2012}\label{path}
	The $i$-th smallest Laplacian eigenvalue of $P_n$ is
	\[\theta_i(P_n) = 2\left(1-\cos \frac{(i-1)\pi}{n}\right).\]
	For $i\ge 2$, a corresponding unit eigenvector has entries
	\[\sqrt{\frac{2}{n}}\cos \frac{(2k-1)(i-1)\pi}{2n},\quad k=1,2,\cdots, n.\]
\end{theorem}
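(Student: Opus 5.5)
The plan is to diagonalize $L(P_n)$ explicitly with a cosine basis. These vectors are the eigenvectors of the discrete Neumann Laplacian, so the proof is a direct verification that each vector is an eigenvector, followed by normalization.

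\textbf{Choice of vectors.} Write $\alpha = (i-1)\pi/n$ and label the path vertices $1,\dots,n$. Define $x_k = \cos\big((k-\tfrac12)\alpha\big)$, which is the stated formula $\cos\frac{(2k-1)(i-1)\pi}{2n}$ before scaling.

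\textbf{Interior rows.} For an interior vertex $2\le k\le n-1$ the $k$-th row of $Lx$ is
\[
2x_k - x_{k-1} - x_{k+1}.
\]
The sum-to-product identity gives $x_{k-1}+x_{k+1} = 2\cos\alpha\, x_k$. Hence
\[
(Lx)_k = 2(1-\cos\alpha)\,x_k.
\]

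\textbf{End rows.} This is the only step needing care. At vertex $1$ we need $x_1 - x_2 = 2(1-\cos\alpha)x_1$, which is equivalent to
\[
x_0 = x_1, \qquad x_0 := \cos(-\alpha/2).
\]
This holds because cosine is even, $\cos(-\alpha/2)=\cos(\alpha/2)$. At vertex $n$ we need the ghost value $x_{n+1}=\cos\big((n+\tfrac12)\alpha\big)$ to equal $x_n=\cos\big((n-\tfrac12)\alpha\big)$. Since $n\alpha=(i-1)\pi$, the two arguments are reflections of each other about a multiple of $\pi$, so their cosines agree. These half-integer shifts are exactly what make the Neumann boundary conditions hold.

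\textbf{Completeness.} For $i=1,\dots,n$ the numbers $\alpha\in[0,\pi)$ are distinct. Therefore the eigenvalues $2(1-\cos\alpha)$ are distinct and increasing in $i$. Each $x$ is nonzero; for $i\ge2$, for instance, $x_1=\cos(\alpha/2)>0$. This gives $n$ distinct eigenvalues, so they form the whole spectrum in the stated order, and $\theta_1=0$ corresponds to the constant vector.

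\textbf{Normalization.} We use $\cos^2=\tfrac12(1+\cos 2\cdot)$, which gives
\[
\sum_k x_k^2 = \frac n2 + \frac12\sum_{k=1}^n \cos\big((2k-1)\alpha\big).
\]
For $i\ge2$ we have $2\alpha\notin 2\pi\mathbb Z$, and the last sum is the real part of a geometric sum:
\[
\sum_{k=1}^n \cos\big((2k-1)\alpha\big) = \frac{\sin(2n\alpha)}{2\sin\alpha}.
\]
This vanishes because $2n\alpha\in 2\pi\mathbb Z$. Therefore $\|x\|^2=n/2$, and the normalizing factor is $\sqrt{2/n}$.
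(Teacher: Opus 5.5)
Your proof is correct and complete. The sum-to-product identity $x_{k-1}+x_{k+1}=2\cos\alpha\,x_k$ holds for every integer $k$, so the two end rows reduce exactly to the ghost conditions $x_0=x_1$ and $x_{n+1}=x_n$, and you verify both. The $n$ distinct values $2(1-\cos\alpha)$ then exhaust the spectrum in the stated order. The telescoping identity $\sum_{k=1}^n\cos((2k-1)\alpha)=\sin(2n\alpha)/(2\sin\alpha)=0$ gives the normalization, and it correctly needs $\sin\alpha\neq 0$, i.e.\ $i\ge 2$. For $i=1$ the displayed vector $\sqrt{2/n}\,\one$ has norm $\sqrt2$, which is why the statement excludes that case.

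The paper gives no proof of its own here; it simply cites Brouwer--Haemers. A common alternative derivation folds the cycle $C_{2n}$. The partition of its vertices into the pairs $\{k,1-k\}$ (mod $2n$) is equitable with quotient $L(P_n)$. The symmetrized cycle eigenvectors $\zeta^k+\zeta^{1-k}$ with $\zeta^{2n}=1$ are, up to a scalar, exactly your cosine vectors. That viewpoint explains the half-integer shift, since the reflection axis passes through edge midpoints, and it makes the boundary rows automatic. Your row-by-row check, by contrast, is fully self-contained and directly produces the explicit normalized eigenvectors that the paper later uses.
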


Combining this with the interlacing result yields a diameter bound.
\begin{theorem}\label{diameter_bound}
	Let $T$ be a tree with diameter $d$. For $i=1,2,\cdots,d+1$,
	\[\theta_i(T) \le \theta_i(P_{d+1})=2\left(1-\cos\frac{(i-1)\pi}{d+1}\right) .\]
	In particular, if $d\ge 4$, the algebraic connectivity of $T$ satisfies
	\[\theta_2(T)\le \frac{3-\sqrt{5}}{2}.\]
\end{theorem}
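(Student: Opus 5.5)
A tree $T$ of diameter $d$ contains a path on $d+1$ vertices as a subtree: take any two vertices at distance $d$ and the unique path joining them. This path is isomorphic to $P_{d+1}$ and is a subtree of $T$ with $m=d+1$ vertices. Lemma \ref{edge-interlacing} then gives $\theta_i(T)\le \theta_i(P_{d+1})$ for every $i=1,\dots,d+1$. Substituting the explicit eigenvalues of the path from Theorem \ref{path} yields the displayed formula $2\left(1-\cos\frac{(i-1)\pi}{d+1}\right)$.

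For the algebraic connectivity, take $i=2$. This gives $\theta_2(T)\le 2\left(1-\cos\frac{\pi}{d+1}\right)$, and the right-hand side decreases as $d$ grows, since $\pi/(d+1)$ decreases and cosine increases toward $1$. So for $d\ge 4$ we get $\theta_2(T)\le 2(1-\cos(\pi/5))$. Now $\cos(\pi/5)=\frac{1+\sqrt5}{4}$, so $2(1-\cos(\pi/5))=2-\frac{1+\sqrt5}{2}=\frac{3-\sqrt5}{2}$.

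The only steps needing care are the existence of the path subtree, which follows from the definition of diameter in a tree, and checking that Lemma \ref{edge-interlacing} compares $T$ with a subtree whose index range is $1,\dots,m$ with $m=d+1$, which is exactly what the statement requires. Neither step is a real obstacle; the argument is a direct combination of the earlier results.
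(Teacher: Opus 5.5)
Your proof is correct and matches the paper's argument: the paper obtains this bound by applying Lemma \ref{edge-interlacing} to a longest path $P_{d+1}$ in $T$ and reading off the path eigenvalues from Theorem \ref{path}. Your computation $2(1-\cos(\pi/5))=\frac{3-\sqrt{5}}{2}$, together with the monotonicity in $d$, supplies the ``in particular'' clause, which the paper leaves implicit.
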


We now apply Lemma \ref{W_vv} with $\omega = \frac{3-\sqrt{5}}{2}$ to show 
\[W=\sum_{0< \lambda \le \omega} E_{\lambda}\]
has rank $1$ for any surviving tree in competition with $DS(n-3,1)$.  Our first result is an upper bound for $\tr(W^{\circ 2})$ in terms of $\tr(W)$. 

\begin{lemma}
	Let $X$ be a connected graph with Laplacian spectral decomposition
	\[L = \sum_{\lambda} \lambda E_{\lambda}.\]	
	Let $\omega = \frac{3-\sqrt{5}}{2}$, and
	\[W = \sum_{0<\lambda\le \omega} E_{\lambda}.\]
Then
	\[\tr(W^{\circ 2}) \le \frac{3}{5}\tr(W).\]
\end{lemma}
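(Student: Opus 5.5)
Since $W$ is an orthogonal projection, it is symmetric and idempotent, so $\tr(W^{\circ 2})=\sum_v W_{vv}^2 \le \bigl(\max_v W_{vv}\bigr)\sum_v W_{vv} = \bigl(\max_v W_{vv}\bigr)\tr(W)$. The plan is therefore to prove the pointwise bound $W_{vv}\le \frac{3}{5}$ for every vertex $v$. We may assume $n\ge 3$: for $n\le 2$ the only connected graphs are $K_1$ and $K_2$, both with no Laplacian eigenvalue in $(0,\omega]$, so $W=0$. The argument splits by the degree of $v$. Throughout we use that $\omega=\frac{3-\sqrt5}{2}$ satisfies $(1-\omega)^2=\omega$.

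\emph{Vertices of degree at least $2$.} Here Lemma~\ref{W_vv} applies directly, giving $W_{vv}\le \frac{d}{d+(d-\omega)^2}$ with $d=\deg(v)\ge 2$. For $d\ge\omega$ the quantity $(d-\omega)^2/d$ is increasing in $d$, so the bound is largest at $d=2$. There $(2-\omega)^2=\frac{3+\sqrt5}{2}$, so $W_{vv}\le \frac{2}{2+\frac{3+\sqrt5}{2}}=\frac{4}{7+\sqrt5}\approx 0.433<\frac35$.

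\emph{Pendent vertices.} This is the main obstacle. For $\deg(v)=1$, Lemma~\ref{W_vv} only gives $W_{vv}\le \frac{1}{1+\omega}=\frac{2}{5-\sqrt5}\approx 0.72$, which is not good enough, so extra information from the unique neighbour $u$ of $v$ is needed. Connectivity and $n\ge 3$ force $\deg(u)\ge 2$, so the previous case gives $W_{uu}\le 0.433$.

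I would first extract two consequences of the operator inequality $LW=\sum_{0<\lambda\le\omega}\lambda E_\lambda\preceq \omega W$ (both sides are symmetric and commute).
\begin{enumerate}[(i)]
\item The $(v,v)$ entry of $LW\preceq\omega W$ reads $W_{vv}-W_{uv}\le \omega W_{vv}$, so $W_{uv}\ge (1-\omega)W_{vv}$.
\item The $(u,u)$ entry, exactly as in the proof of Lemma~\ref{W_vv}, gives $(\deg(u)-\omega)W_{uu}\le W_{uv}+\sum_{w\sim u,\,w\ne v}W_{wu}$.
\end{enumerate}

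Next I would bound the sum over $w\ne v$ more sharply than in Lemma~\ref{W_vv}, by Cauchy--Schwarz together with $\sum_{z}W_{zu}^2=W_{uu}$, after separating out the contribution of $v$. This gives
\[
\sum_{w\sim u,\,w\ne v}W_{wu}\le\sqrt{(\deg(u)-1)\bigl(W_{uu}-W_{uu}^2-W_{uv}^2\bigr)},
\]
and similarly for column $v$ one has $W_{uv}^2\le W_{vv}(1-W_{vv})$ and $W_{uv}^2\le W_{uu}W_{vv}$.

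Writing $a=W_{vv}$, $b=W_{uv}$, $c=W_{uu}$ and $d=\deg(u)$, these inequalities form a small semialgebraic system: $b\ge(1-\omega)a$, $b^2\le ac$, $b^2\le a(1-a)$, and $(d-\omega)c\le b+\sqrt{(d-1)(c-c^2-b^2)}$. The goal is to show that this system forces $a\le\frac35$. The identity $(1-\omega)^2=\omega$ should make the extremal case clean. Indeed, (i) combined with $b^2\le a(1-a)$ gives $\omega a^2\le a(1-a)$, i.e.\ $a\le \frac{1}{1+\omega}$, which recovers the bound from Lemma~\ref{W_vv}; the constraint at $u$ from (ii) must push this down to $\frac35$.

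If the case $d=2$ is the worst (which I expect, since the constraint at $u$ is weakest there), I would treat it explicitly and then handle $d\ge 3$ by monotonicity in $d$. Should this local system turn out to be insufficient, the fallback is to also use the quadratic-form inequality $x^TLx\le\omega\,x^Tx$ for $x=We_v$, which controls the edge energy $(x_v-x_u)^2$ plus the energy on the rest of the graph.
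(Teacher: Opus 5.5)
\textbf{There is a genuine gap: the pendent-vertex bound you aim for is false.} Your reduction $\tr(W^{\circ 2})\le(\max_v W_{vv})\tr(W)$ requires $W_{vv}\le\frac35$ at every pendent vertex. No refinement of your local system, and not the fallback either, can establish this, because it fails in actual graphs.

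Take the tree obtained from $K_{1,m}$ by subdividing every edge: centre $c$, legs $c\sim u_i\sim v_i$. Put $\alpha=1-\omega$, so $\alpha^2=\omega$.
\begin{enumerate}[(i)]
\item Every vector with $x_c=0$, $x_{u_i}=\alpha c_i$, $x_{v_i}=c_i$ and $\sum_i c_i=0$ is an $\omega$-eigenvector. At $u_i$ this uses $2\alpha-1=\omega\alpha$.
\item The remaining nonzero eigenvalues are $\frac{3+\sqrt5}{2}$ and the roots of $x^2-(m+3)x+2m+1$. All of these exceed $\omega$.
\item Hence $W=E_\omega$, and at each leaf $W_{v_iv_i}=\frac{1}{1+\omega}\cdot\frac{m-1}{m}$.
\item For $m=6$ this equals $\frac{5+\sqrt5}{12}>\frac35$. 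As $m\to\infty$ it tends to $\frac{1}{1+\omega}$, which is exactly the degree-one bound from Lemma \ref{W_vv}.
\end{enumerate}
Your semialgebraic system is consistent with this example. Here $b=\alpha a$ and $c=\omega a$. Constraint (ii) holds with equality at $d=2$, because $(2-\omega)\omega=\alpha$ and $W_{cu_i}=0$.

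\textbf{The missing idea is to bound a leaf together with its support vertex.} A large diagonal entry at a leaf forces a comparable entry at its neighbour. In fact your (i) together with $b^2\le ac$ already gives $W_{uu}\ge\omega W_{vv}$. The paper gets $\omega W_{vv}\le W_{uu}\le W_{vv}$ from the eigen-equation $(E_\lambda)_{uu}=(1-\lambda)^2(E_\lambda)_{vv}$.

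The paper therefore groups each bouquet $\{u\}\cup P(u)$, counting $u$ with its leaves rather than among the degree-$\ge2$ vertices.
\begin{enumerate}[(i)]
\item If $p_u\ge 2$, then $W_{uu}\le W_{vv}\le 1/p_u\le\frac12$, which suffices.
\item If $p_u=1$, it proves the paired inequality $W_{uu}^2+W_{vv}^2\le\frac35(W_{uu}+W_{vv})$. Equivalently, $W_{vv}\le\frac35\cdot\frac{1+t}{1+t^2}$ with $t=W_{uu}/W_{vv}\in[\omega,1]$.
\item For (ii) it uses two estimates: the sharper bound $W_{uv}\ge\frac{W_{uu}+\alpha W_{vv}}{1+\alpha}$, obtained from $LW(\omega I-L)W\succeq 0$, and $W_{uv}^2\le(1-W_{uu})(1-W_{vv})$.
\end{enumerate}

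In the spider above, the paired ratio $\tr(W^{\circ 2})/\tr(W)$ equals exactly $\frac35\cdot\frac{m-1}{m}$, since $\frac{1+\omega^2}{(1+\omega)^2}=\frac35$. So the constant $\frac35$ is asymptotically sharp, and only the pairing, not any vertex-by-vertex bound, can reach it.
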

\proof 
The inequality holds trivially if $W=0$. So assume $W\ne 0$. By Lemma \ref{W_vv}, any vertex $v\notin P \cup Q$ satisfies
\[W_{vv}\le \frac{2}{2+(2-\omega)^2}< \frac{3}{5},\]
and so
\[\tr(W[\comp{P\cup Q}]^{\circ 2}) \le \frac{3}{5} \tr(W[\comp{P\cup Q}]) .\]
Thus it suffices to show that for each $u\in Q$,
\[\tr(W[\{u\}\cup P(u)]^{\circ 2}) \le \frac{3}{5} \tr(W[\{u\}\cup P(u)]),\]
that is, for each $v\in P(u)$,
\begin{equation}\label{tr_bouquet}
	W_{uu}^2 + p_u W_{vv}^2 \le \frac{3}{5} (W_{uu} + p_u W_{vv}).
\end{equation}
The eigen-equation at vertex $v$ gives
\[(E_{\lambda})_{uu}=(1-\lambda)^2 (E_{\lambda})_{vv}.\]
Since $(1-\omega)^2=\omega$, we have
\[\omega (E_{\lambda})_{vv} \le (1-\lambda)^2 (E_{\lambda})_{vv} = (E_{\lambda})_{uu} \le (E_{\lambda})_{vv}.\]
Note that $\theta_n(W)=1$ as $W$ is the orthogonal projection onto a nonzero subspace. By the min-max theorem,
\begin{align*}
	p_u&= p_u \theta_n(W)
	\ge \chi_{P(u)}^T W \chi_{P(u)}  
	=\elsm(W[P(u)]) 
	=p_u^2 W_{vv}.
\end{align*}
Hence
\begin{equation}\label{1/pu}
	W_{uu}\le W_{vv} \le \frac{1}{p_u}.
\end{equation}
For $p_u\ge 2$, this immediately implies
\[W_{uu}^2 <\frac{3}{5} W_{uu},\quad W_{vv}^2 < \frac{3}{5} W_{vv},\]
and so \eqref{tr_bouquet} holds. Now suppose $p_u=1$. If $W_{vv}=0$, then $W_{uu}=0$ and \eqref{tr_bouquet} is vacuously true. So assume $W_{vv}\ne 0$, and let
\begin{equation}\label{t}
	 t = \frac{W_{uu}}{W_{vv}},\quad \omega\le t \le 1.
	 \end{equation}
Then \eqref{tr_bouquet} is equivalent to 
\begin{equation*}
	W_{vv} \le \frac{3}{5}\frac{t+1}{t^2+1}.
\end{equation*}
Let $\alpha =1-\omega$.
We claim that
\[W_{vv}\le \frac{(1+\alpha)^2}{(t+\alpha)^2+(1+\alpha)^2}\le  \frac{3}{5}\frac{t+1}{t^2+1}.\] 
The second inequality follows from direct comparison and that $\alpha^2=1-\alpha$.
For the first inequality, note that 
\[0\preceq LW\preceq \omega W, \]
from which we obtain
\[LW(\omega I - L)W\succeq 0.\]
Hence
\begin{align*}
	0&\le \ip{WLe_v}{(\omega I-L) We_v}\\
	&=\ip{W(e_v-e_u)}{W(e_u - \alpha e_v)}\\
	&=(1+\alpha)W_{uv}-W_{uu}-\alpha W_{vv}.
\end{align*}
Isolating $W_{uv}$ gives
\begin{equation}\label{Wuv1}
	W_{uv} \ge \frac{W_{uu}+\alpha W_{vv}}{1+\alpha} =\frac{t+\alpha}{1+\alpha} W_{vv}.
\end{equation}
On the other hand, since
\[W\preceq I,\]
we have
\begin{equation}\label{Wuv2}
	0\le \det((I-W)[u,v]) = (1-W_{uu})(1-W_{vv})-W_{uv}^2.
	\end{equation}
Combining \eqref{t}, \eqref{Wuv1} and \eqref{Wuv2} gives the desired upper bound for $W_{vv}$.
\qed

It follows that for any connected graph $X$ on $n\ge 19$ vertices such that $\tr(X)\ge \tr(DS(n-3,1))$, there is at most one eigenvalue in the interval $(0, \frac{3-\sqrt{5}}{2}]$, that is, the algebraic connectivity, with multiplicity one.

\begin{corollary}\label{tr1}
	Let $X$ be a connected graph on $n\ge 19$ vertices with 
	\[\tr(X)\ge \tr(DS(n-3,1)).\]
	 Let $\omega=\frac{3-\sqrt{5}}{2}$. Then
	 $X$ has at most one eigenvalue in the interval $(0, \omega]$. Moreover, if $X$ is a tree $T$ with diameter at least $4$, then 	 
	\[\theta_2(T) \le  \omega <\theta_3(T).\]
\end{corollary}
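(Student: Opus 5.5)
The argument rests on Lemma \ref{refinement} and the preceding lemma $\tr(W^{\circ 2})\le \frac35\tr(W)$. I will group the eigenprojections of $L$ into three blocks:
\[E_0=\tfrac1n J,\qquad W=\sum_{0<\lambda\le\omega}E_\lambda,\qquad I-E_0-W .\]
Since the true spectral decomposition refines this grouping, Lemma \ref{refinement} gives
\[\tr(X)\le \tr(E_0^{\circ 2})+\tr(W^{\circ 2})+\tr\bigl((I-E_0-W)^{\circ 2}\bigr).\]
Let $k=\tr(W)=\rk(W)$, the number of eigenvalues in $(0,\omega]$ counted with multiplicity.

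The first step is to expand the last term. Write $(I-E_0-W)_{vv}=1-\frac1n-W_{vv}$. Then
\[\tr(X)\le \frac1n + n\Bigl(1-\tfrac1n\Bigr)^2 - 2\Bigl(1-\tfrac1n\Bigr)k + 2\tr(W^{\circ 2}).\]
Applying the previous lemma to the last term gives
\[\tr(X)\le \frac{(n-1)^2+1}{n} - \frac{4}{5}k + \frac{2k}{n}.\]
For $k\ge 2$ this is at most
\[n-2+\frac2n-\frac85+\frac4n \;=\; n-\frac{18}{5}+\frac6n .\]
By Theorem \ref{dsn-31}, $\tr(DS(n-3,1))>n-\frac{14}{5}-\frac{56}{25n}$. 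The bound above falls below this as soon as $\frac45>\frac{6}{n}+\frac{56}{25n}$, which holds for $n\ge 19$ (indeed for $n\ge 11$). Hence $k\le 1$, which proves the first claim.

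For the tree statement, let $T$ have diameter at least $4$. Theorem \ref{diameter_bound} gives $\theta_2(T)\le\omega$, so $k\ge1$. Combined with $k\le1$, this forces $k=1$. Therefore $\theta_2(T)\le\omega$ is the only eigenvalue in $(0,\omega]$, and so $\theta_3(T)>\omega$.

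The only step needing care is the arithmetic comparison with the lower bound from Theorem \ref{dsn-31}, and in particular checking that the linear-in-$k$ bound is monotone in $k$, so that it suffices to treat $k=2$. Both are routine: the coefficient of $k$ is $-\frac45+\frac2n<0$ for $n\ge 3$.
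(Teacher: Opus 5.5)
Your proof is correct and follows the paper's argument. It uses the same three-block refinement $E_0$, $W$, $I-E_0-W$ via Lemma \ref{refinement}, and the bound $\tr(W^{\circ 2})\le\frac35\tr(W)$, to get $\tr(X)\le n-2+\frac2n-\left(\frac45-\frac2n\right)\tr(W)$; the comparison with Theorem \ref{dsn-31} then forces $\tr(W)\le 1$, and Theorem \ref{diameter_bound} handles the tree case. You also make the final arithmetic explicit (it already works for $n\ge 11$), which the paper leaves implicit.
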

\proof
Let
\[W = \sum_{0<\lambda \le \omega} E_{\lambda}.\]
We have
\begin{align}
	\tr(X)&\le \tr(E_0^{\circ 2}) + \tr(W^{\circ 2}) + \tr((I-E_0-W)^{\circ 2}) \notag \\
	&= \tr(E_0^{\circ 2}) + \tr((I-E_0)^{\circ 2})  - 2\tr((I-E_0) \circ W) + 2\tr(W^{\circ 2}) \notag \\
	&=n-2+\frac{2}{n}  - \frac{2(n-1)}{n} \tr(W) + 2\tr(W^{\circ 2}) \label{simple}\\
	&\le n-2+\frac{2}{n} - \frac{2(n-1)}{n} \tr(W) + \frac{6}{5}\tr(W) \notag \\
	&=n-2+\frac{2}{n} -\left(\frac{4}{5}- \frac{2}{n} \right)\tr(W). \notag
\end{align}
Thus by Theorem \ref{dsn-31}, for
\[\tr(X) \ge  \tr(DS(n-3,1))>n-\frac{14}{5} - \frac{56}{25n},\]
we must have $\tr(W)\le 1$. If, in addition, $X$ is a tree with diameter at least $4$, then its algebraic connectivity lies below $\omega$, from which the claimed inequality follows.
\qed

A \textsl{caterpillar} is a tree whose removal of leaves gives a path. Corollary \ref{tr1}  shows that any tree on $n$ vertices that beats or tie $DS(n-3,1)$ must be a caterpillar. In fact, we can say something stronger.

\begin{corollary}
	Let $T$ be a tree on $n\ge 19$ vertices with diameter at least $4$, and suppose 
	\[\tr(T)\ge \tr(DS(n-3,1)).\]
	Then $T$ is a caterpillar with diameter at most $8$, and it has a unique nontrivial bouquet that contains at least  $8$ leaves.
\end{corollary}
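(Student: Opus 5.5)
We need to prove three things: $T$ is a caterpillar, its diameter is at most $8$, and it has a unique nontrivial bouquet containing at least $8$ leaves. The plan uses Corollary \ref{tr1}, which gives $\theta_3(T) > \omega = \frac{3-\sqrt{5}}{2}$, together with Lemma \ref{edge-interlacing} and Corollary \ref{unique_bouquet}.

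\emph{Caterpillar.} Suppose $T$ is not a caterpillar. Then $T$ contains the spider $Y$ obtained by subdividing each edge of $K_{1,3}$ once, which has $7$ vertices. Its Laplacian spectrum follows from the symmetric/antisymmetric decomposition of the legs. The antisymmetric vectors between legs give an eigenvalue of multiplicity $2$, namely the smaller root of $x^2-3x+1$, which equals $\omega$. So $\theta_2(Y)=\theta_3(Y)=\omega$. By Lemma \ref{edge-interlacing}, $\theta_3(T)\le\theta_3(Y)=\omega$, contradicting Corollary \ref{tr1}. Hence $T$ is a caterpillar.

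\emph{Diameter.} Let $d$ be the diameter. By Theorem \ref{diameter_bound},
\[\theta_3(T)\le \theta_3(P_{d+1}) = 2\left(1-\cos\frac{2\pi}{d+1}\right).\]
The right-hand side is at most $\omega$ exactly when $\cos\frac{2\pi}{d+1}\ge \frac{1+\sqrt5}{4}=\cos\frac{\pi}{5}$, that is, when $d+1\ge 10$. So $d\ge 9$ would force $\theta_3(T)\le\omega$, contradicting Corollary \ref{tr1}. Hence $d\le 8$.

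\emph{Bouquet.} Corollary \ref{unique_bouquet} already gives at most one nontrivial bouquet, and says the largest bouquet has $p_u\le 2$ or $p_u\ge 8$. It therefore remains to rule out $p_u\le 2$; this is the main obstacle. If $p_u\le 2$, every quasi-pendent vertex has at most two leaves. Since $d\le 8$, the spine of the caterpillar has at most $7$ vertices, so $n\le 7\cdot 3=21$ or so. Because the paper has verified all trees with $n\le 18$ by computer, only $n\in\{19,20,21\}$ would survive. I would handle these with Corollary \ref{cell-bound}. When all $p_u\le 2$, we have $p-q\le 1$, and $\sum_{u\in Q} 1/p_u\le q-\frac12\,[p-q=1]$. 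Then
\[\tr(T)\le n-2+\frac{2(p-q+1)}{n}-\left(1\text{ if some }p_u=2\right),\]
which is not yet strong enough. So instead I would bound the number of vertices more carefully: a spine of at most $7$ vertices with all bouquets of size at most $2$ and only one of size $2$ gives at most $7+2+1=10$ vertices, well under $19$. That contradiction finishes the proof.
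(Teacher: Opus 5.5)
Your proposal is correct and follows essentially the paper's route. You use the spider $S(2,2,2)$ with edge interlacing to get the caterpillar property, and $\theta_3(P_{d+1})$ to get $d\le 8$. You then combine a vertex count with Corollary \ref{unique_bouquet} to force $p_u\ge 8$; the paper phrases this last step as a pigeonhole: at least $10$ leaves off a longest path attach to at most $7$ spine vertices, so some $p_u\ge 4$.

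There is one arithmetic slip. With at most one bouquet of size $2$ and every other bouquet of size at most $1$, each of the up to $7$ spine vertices may carry a leaf, so the correct bound is $n\le 7+7+1=15$, not $7+2+1=10$. Since $15<19$, the contradiction still holds. The detour through $n\in\{19,20,21\}$ and Corollary \ref{cell-bound} is unnecessary.
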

\proof
Let $d$ be the diameter of $T$. Let
\[\omega = \frac{3-\sqrt{5}}{2}.\]
By Corollary \ref{tr1} and Theorem \ref{diameter_bound}, 
\[\omega< \theta_3(T)\le \theta_3(P_{d+1})=2\left(1-\cos\frac{2\pi}{d+1}\right),\]
so we must have $d\le 8$. If $T$ is not a caterpillar, then $T$ contains the spider graph $S(2,2,2)$ obtained from $S_4$ by subdividing each edge once. Thus by Lemma \ref{edge-interlacing},
\[\theta_3(T)\le \theta_3(S(2,2,2)) = \omega,\]
a contradiction. Therefore, $T$ is a caterpillar. Let $P_{d+1}$ be a longest path in $T$. Since $n\ge 19$ and $d\le 8$, at least $10$ vertices are not on $P_{d+1}$, and they are adjacent to at most $7$ vertices on $P_{d+1}$. Thus for $T$ to have at most one nontrivial bouquet, some vertex $u$ on $P_{d+1}$ must have at least $4$ leaf neighbors. By Corollary \ref{unique_bouquet}, $p_u\ge 8$.
\qed

\subsection{Algebraic connectivity}
In this subsection, we complete the proof that $DS(n-3,1)$ is the second laziest tree on $n$ vertices, using the eigenprojection associated with the algebraic connectivity.

\begin{lemma}\label{3/7}
	Let $T$ be a tree on $n\ge 19$ vertices with diameter at least $4$, and suppose 
	\[\tr(T)\ge \tr(DS(n-3,1)).\]
	Let $H$ be the $\theta_2(T)$-eigenprojection of $T$. Then 
		\[\tr(H^{\circ 2})> \frac{3}{7}.\]
\end{lemma}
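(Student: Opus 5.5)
Under the hypotheses, Corollary \ref{tr1} gives $\theta_2(T)\le\omega<\theta_3(T)$ with $\omega=\frac{3-\sqrt5}{2}$, so $\theta_2(T)$ is simple and $H=yy^T$ for a unit Fiedler vector $y$; thus $\tr(H^{\circ 2})=\sum_v y_v^4$. The plan is to argue by contradiction: assume $\sum_v y_v^4\le \frac37$ and bound $\tr(T)$ from above by a refinement argument that separates $E_0$, $H$, $F$ and the rest, then compare with Theorem \ref{dsn-31}.

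First, I will use Lemma \ref{refinement} with the partition $\{E_0, H, E_1, I-E_0-H-E_1\}$. This is valid because $\theta_2<\omega<1$, so $H$ and $E_1$ project onto different eigenspaces. Next I will replace $E_1$ by $F$ exactly as in Corollary \ref{cell-bound}. The identity there,
\[F^{\circ2}+(I-E_0-F)^{\circ2}-E_1^{\circ2}-(I-E_0-E_1)^{\circ2}=2\big((I-E_0-E_1-F)\circ G\big)\ge0,\]
needs the trace of the right-hand side to be nonnegative, and that proof should go through after subtracting $H$, since $H\circ G$ has nonnegative trace and only improves the inequality. Expanding as in \eqref{simple}, this gives
\[
\tr(T)\le n-2q-2+2\sum_{u\in Q}\frac1{p_u}+\frac{2(p-q+1)}{n}-\frac{2(n-1)}{n}\tr(H)+2\tr(H^{\circ2})+2\tr(F\circ H).
\]
Here $\tr(H)=1$. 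By the caterpillar corollary, $T$ has a unique nontrivial bouquet with support $u$, with $p_u\ge8$, and every other $u'\in Q$ has $p_{u'}=1$. The cross term $\tr(F\circ H)$ is supported on $P(u)$, where $F=I-\frac1{p_u}J$. The eigen-equation at a leaf $v\in P(u)$ gives $y_v=y_u/(1-\theta_2)$, so $y$ is constant on $P(u)$ and $\tr(F\circ H)=(p_u-1)y_v^2$. Since $H=yy^T$ and $\chi_{P(u)}^T H\chi_{P(u)}\le p_u$, we get $y_v^2\le 1/p_u$, so this term is at most $1-\frac1{p_u}$.

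The hard part is that this crude bound alone does not beat $n-\frac{14}5-\frac{56}{25n}$. I would sharpen it by noting that $y$ is orthogonal to $\one$, which forces the mass of $y$ to split between the bouquet side and the far end of a path of length at least $4$. A large $p_u y_v^2$ then forces a large $y_v^4$ contribution elsewhere, and I would make this quantitative with Cauchy--Schwarz. Concretely, I expect to show $2\tr(H^{\circ2})+2\tr(F\circ H)-2$ is at most roughly $2\cdot\frac37+c$ for an explicit constant $c$. I also expect to need $q\ge3$, which holds since the diameter is at least $4$: this gives $n-2q-2+2(q-1)+\frac2{p_u}=n-4+\frac2{p_u}$. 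The resulting bound should come out below $n-\frac{14}5-\frac{56}{25n}$ once $\tr(H^{\circ2})\le\frac37$ and $n\ge19$.

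Balancing these constants is the step I expect to be the main obstacle, and the threshold $\frac37$ is presumably exactly what makes it close. If the direct estimate falls short, I would instead bound the diagonal of $H$ at the vertices of the longest path using Lemma \ref{W_vv} with $\omega$ and $W=H$. That gives $H_{vv}<\frac35$ off $P\cup Q$, and controlling $\sum_v y_v^4$ against $\max_v y_v^2$ may give the extra slack needed.
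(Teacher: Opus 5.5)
Your proposal has a genuine gap: the decisive estimate is never carried out. With the crude bound $y_v^2\le 1/p_u$, your displayed inequality becomes $\tr(T)\le n-4+\frac{2(p_u+1)}{n}+2\tr(H^{\circ 2})$. Since $p_u$ can be of order $n$, the term $\frac{2(p_u+1)}{n}$ is not small, so, as you say, this cannot beat $n-\frac{14}{5}-\frac{56}{25n}$. Everything after that (a Cauchy--Schwarz mass-splitting estimate with an unspecified constant $c$, or Lemma \ref{W_vv} as a fallback) is a plan, not a proof.

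The missing step is elementary. It is exactly the orthogonality to $\one$ that you mention but do not use. Since $H\preceq I-E_0=I-\frac1nJ$ and $y$ is constant on $P(u)$,
\[p_u^2y_v^2=\chi_{P(u)}^TH\chi_{P(u)}\le p_u-\frac{p_u^2}{n},\qquad\text{so}\qquad 2\tr(F\circ H)\le 2(p_u-1)\left(\frac1{p_u}-\frac1n\right).\]
Substituting this into your display, every $p_u$-dependent term cancels, and you are left with $\tr(T)\le n-4+\frac4n+2\tr(H^{\circ 2})$.

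This is precisely the bound the paper uses, and it gets it in one line without any $F$, $G$ or bouquet structure. By Corollary \ref{tr1}, $W=H$ and $\tr(H)=1$, and \eqref{simple} (the three-block refinement $E_0$, $H$, $I-E_0-H$) gives $\tr(T)\le n-4+\frac4n+2\tr(H^{\circ2})$ directly. If $\tr(H^{\circ2})\le\frac37$, this is at most $n-\frac{22}{7}+\frac4n$. Since $\frac{22}{7}-\frac{14}{5}=\frac{12}{35}>\frac{156}{25n}$ for $n\ge 19$, Theorem \ref{dsn-31} gives $\tr(T)<\tr(DS(n-3,1))$, a contradiction. So no delicate balancing around $\frac37$ is needed.

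Note also that your justification for carrying the $F$-replacement over with $H$ removed is backwards. You need $\tr((I-E_0-H-E_1-F)\circ G)\ge 0$, and subtracting the nonnegative quantity $\tr(H\circ G)$ makes this harder, not easier. The step can be repaired: $I-E_0-E_1-H$ is still a projection, and Lemma \ref{W_vv} with $\omega=1$ bounds $(E_1+H)_{ww}\le\frac23$ for $w\in R$. But the repair is unnecessary, since the coarse three-block bound already suffices.
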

\proof
By Corollary \ref{tr1} and equation \eqref{simple}, $\tr(H)=1$, and
\[\tr(T) \le n-4+\frac{4}{n}+2\tr(H^{\circ 2}).\]
If $\tr(H^{\circ 2})\le \frac{3}{7}$, then for $n\ge 19$, the above inequality and Theorem \ref{dsn-31} give
\begin{align*}
	\tr(T) &\le  n - \frac{22}{7}+\frac{4}{n}
	<n-\frac{14}{5} - \frac{56}{25n}
	<\tr(DS(n-3,1)),
\end{align*}
which contradicts our choice of $T$. Hence $\tr(H^{\circ 2})> \frac{3}{7}$.
\qed

A \textsl{Fiedler vector} for a connected graph $X$ is an eigenvector corresponding to its algebraic connectivity $\theta_2(X)$. A tree is \textsl{type I} if some Fiedler vector vanishes at a vertex, and \textsl{type II} otherwise. A \textsl{branch} of a tree $T$ at a vertex $v$ is a component of $T-v$. For a type I tree, a branch at $a$ is \textsl{active} if some Fiedler vector is nonzero on it, and \textsl{passive} otherwise. The following theorem collects the results from \cite{Fiedler1975b,Merris1987,Grone1987,Kirkland1996}.

\begin{theorem}\cite{Fiedler1975b,Merris1987,Grone1987,Kirkland1996}\label{typeItypeII}
	Let $T$ be a tree.
	\begin{enumerate}[(i)]
		\item If $T$ is \textsl{type I}, then there is a unique vertex $a$ such that, for every Fiedler vector $z$, we have $z_a=0$ and $a$ is adjacent to $\supp(z)$. Moreover, the multiplicity of $\theta_2(T)$ is one less the number of active branches at $a$. For any Fiedler vector $z$ of $T$ and any active branch $B$ at $a$, the entries of $z[B]$ have the same sign, and
		\[\theta_2(T) z[B]=L[B]z[B] = \theta_1(L[B])z[B].\]
		\item If $T$ is \textsl{type II}, then there is a unique edge $ab$ such that $z_a z_b<0$ for every Fiedler vector $z$. Moreover, $\theta_2(T)$ is simple. If $B_+$ is the branch at $b$ containing $a$, and $B_-$ is the branch at $a$ containing $b$, then the entries of $z[B_+]$ have the same sign, and the entries of $z[B_-]$ have the same sign. Moreover, there is $\eta>0$ such that 
		\[\theta_2(T) z[B_+]=(L[B_+]+ \eta E_{aa})z[B_+] = \theta_1(L[B_+]+ \eta E_{aa}) z[B_+]\]
		and
		 \[  \theta_2(T) z[B_-]=\left(L[B_-] + \frac{1}{\eta} E_{bb}\right)z[B_-] = \theta_1\left(L[B_-] + \frac{1}{\eta} E_{bb}\right)z[B_-].\]
	\end{enumerate}
\end{theorem}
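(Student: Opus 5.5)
This result is a compilation of known facts, so the plan is to reconstruct it from two ingredients: Perron--Frobenius theory for principal submatrices of $L$, and Fiedler's sign-structure theorem. For the first ingredient, fix a Fiedler vector $z$. For any connected vertex set $B\subsetneq V(T)$, the matrix $L[B]$ (or $L[B]$ plus a nonnegative diagonal perturbation) is an irreducible nonsingular $M$-matrix. Hence its least eigenvalue is simple, has a positive eigenvector, and is the only eigenvalue with a nonnegative eigenvector. So as soon as we know that $z[B]$ has constant sign and satisfies an eigen-equation of the form $(L[B]+D)z[B]=\theta_2 z[B]$ with $D\succeq0$ diagonal, we get $\theta_2=\theta_1(L[B]+D)$. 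This gives the displayed identities in both (i) and (ii) once the sign patterns are established.

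For the second ingredient, I would first prove Fiedler's sign lemma: for every Fiedler vector $z$, the induced subgraph on $\{v: z_v\ge 0\}$ is connected, and likewise for $\{v: z_v\le 0\}$. The standard route is as follows. If some branch at a vertex carried entries of both signs in a disconnected way, split $z$ into pieces supported on distinct sign regions. A suitable combination of two such pieces, orthogonal to $\one$, has Rayleigh quotient at most $\theta_2$. Equality then forces that combination to be an eigenvector, and looking at its eigen-equation at a boundary vertex gives a contradiction. I expect this step to be the main obstacle: it is where the tree structure is really used, and the bookkeeping of zero entries is delicate. I would follow Fiedler's original argument closely here.

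With the sign lemma in hand, I would split into cases.

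\emph{Type II} (no Fiedler vector has a zero entry). Connectivity of the sign classes forces exactly one edge $ab$ with $z_az_b<0$. Since a tree has a unique path between any two vertices, $B_+$ and $B_-$ are the two sides of this edge. Set $\eta=-z_b/z_a>0$. The row-$a$ eigen-equation restricted to $B_+$ becomes $(L[B_+]+\eta E_{aa})z[B_+]=\theta_2 z[B_+]$, and the row-$b$ equation gives the same on $B_-$ with $\frac1\eta E_{bb}$; Perron--Frobenius then finishes (ii). Simplicity follows because two independent Fiedler vectors have a nonzero combination vanishing at a vertex, contradicting type II. Uniqueness of the edge then follows from simplicity.

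\emph{Type I}. Take $z$ with $z_a=0$ and $a$ adjacent to $\supp(z)$. On each branch $B$ at $a$ the equation reads $L[B]z[B]=\theta_2z[B]$, and the sign lemma makes $z[B]$ of one sign. Hence every branch on which $z$ is nonzero is active, with $\theta_1(L[B])=\theta_2$. Conversely, take any vector built from the Perron vectors of the active branches, with weights chosen so that $\sum_{w\sim a}z_w=0$ (the eigen-equation at $a$, equivalently orthogonality to $\one$). Any such vector is a Fiedler vector. Therefore the $\theta_2$-eigenspace is exactly the space of such combinations, and its dimension is (number of active branches)$\,-1$. Finally, uniqueness of $a$: if another vertex $a'$ had the same property, apply the strict monotonicity of Perron values along the path from $a$ to $a'$, since $\theta_1(L[B])$ strictly decreases when $B$ is enlarged. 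This contradicts the fact that both vertices have at least two active branches with Perron value $\theta_2$.
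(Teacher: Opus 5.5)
The paper does not prove this theorem; it quotes it from Fiedler, Merris, Grone--Merris and Kirkland--Neumann--Shader, so your proposal has to stand on its own. Your Perron--Frobenius ingredient and your type II argument are sound, given Fiedler's connectivity theorem, which is citable. The type I part, however, has a genuine gap at the step ``the sign lemma makes $z[B]$ of one sign''.

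Both $\{z\ge0\}$ and $\{z\le0\}$ contain $a$. So connectivity only shows that if a branch $B$ at $a$ has vertices of both signs, then $z_r=0$ for the neighbor $r$ of $a$ in $B$. The lemma does not exclude a sign pattern with $z_a=z_r=0$, where $r$ has a positive and a negative neighbor in $B$ and $a$ has a positive and a negative neighbor outside $B$. You need either Fiedler's sharper tree statement or a direct Perron-value comparison. The sharper statement says exactly one zero vertex has a nonzero neighbor, and $z$ is monotone or identically zero along paths leaving it. For the direct comparison: since $\sum_{w\sim a}z_w=0$ and some $z_w\neq0$, $z$ is nonzero on at least two branches $B_1,B_2$, so $\theta_1(L[B_i])\le\theta_2$. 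Suppose some branch $B$ at $a$ had $\theta_1(L[B])<\theta_2$. Combine its Perron vector with that of some $B_i\neq B$, orthogonally to $\one$; since the supports are nonadjacent, this vector has Rayleigh quotient below $\theta_2$, which is impossible. Hence $\theta_1(L[B])\ge\theta_2$ for every branch at $a$, and each nonzero $z[B]$ is a multiple of the Perron vector of $L[B]$.

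The second gap is the sentence ``Therefore the $\theta_2$-eigenspace is exactly the space of such combinations''. You have only shown that these combinations are Fiedler vectors. The reverse inclusion is exactly the first claim of (i), that \emph{every} Fiedler vector vanishes at $a$, which is the content of Merris's theorem. Without it you do not know that an active branch, defined through an arbitrary Fiedler vector, has Perron value $\theta_2$, so the count ``active branches minus one'' is unsupported. The missing step is short. Take $B$ with $\theta_1(L[B])=\theta_2$ and Perron vector $p_B>0$, and let $y$ be any Fiedler vector. The rows of $Ly=\theta_2y$ indexed by $B$ give $(L[B]-\theta_2I)y[B]=y_ae_r$, and pairing with $p_B$ yields $y_a(p_B)_r=0$, so $y_a=0$. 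Then every $y[B]$ is a $\theta_2$-eigenvector of $L[B]$, hence zero or a Perron multiple by the inequality above. This gives the eigenspace, the multiplicity, and the adjacency of $a$ to $\supp(y)$. Your monotonicity argument for uniqueness of $a$ silently uses the same inequality, since it only produces a branch at $a$ with Perron value below $\theta_2$. More simply, any other vertex $a'$ lies in a branch at $a$ on which each Fiedler vector is either nowhere zero or identically zero, so $a'$ cannot have the required property.
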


The vertex $a$ in  (i) and the edge $ab$ in (ii) is called the \textsl{characteristic vertex} and the  \textsl{characteristic edge}, respectively; we will refer to either as the \textsl{spectral center}.

Let $x$ and $y$ be two vectors in $\re^n$. Let $x^{\downarrow}$  and $y^{\downarrow}$ denote the vectors where $x^{\downarrow}_i$ is the $i$-th largest entry of $x$ and $y^{\downarrow}_i$ is the $i$-th largest entry of $y$. We say $y$ \textsl{majorizes} $x$, denoted $x\prec y$, if 
and for $k=1,2,\cdots n-1$,
\begin{equation}\label{maj}
	\sum_{i=1}^k x^{\downarrow}_i \le \sum_{i=1}^{k} y^{\downarrow}_i.
\end{equation}
and
\[\sum_{i=1}^n x^{\downarrow}_i = \sum_{i=1}^{n} y^{\downarrow}_i.\]
The majorization is \textsl{tight} if equality holds in \eqref{maj} for each $k=1,2,\cdots, n-1$.
For more background on majorization, see \cite[Section 4.3]{Horn2012}.

The following theorem combines two results due to Marshall, Olkin and Proschan \cite[Theorems 2.4 and 2.5]{Marshall1967}.

\begin{theorem}\cite{Marshall1967}\label{dec_ratio}
	Let $x$ and $y$ be two vectors in $\re^n$ with positive entries. Suppose
	\[\frac{y^{\downarrow}_1}{x^{\downarrow}_1} \ge \frac{y^{\downarrow}_2}{x^{\downarrow}_2}\ge \cdots\ge  \frac{y^{\downarrow}_{n}}{x^{\downarrow}_{n}}.\]
	Then 
	\[\frac{x}{\ip{x}{\one}} \prec \frac{y}{\ip{y}{\one}}.\]
	Moreover, 
	\[\sqrt[r]{\frac{\ip{x^{\circ r}}{\one}}{\ip{y^{\circ r}}{\one}}}\]
	is non-increasing in $r$, and it is strictly decreasing if and only if $x^{\downarrow}$ and $y^{\downarrow}$ are not parallel.
\end{theorem}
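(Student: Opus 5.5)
Let $x^{\downarrow}$ and $y^{\downarrow}$ be the decreasing rearrangements, and write $\hat x = x/\ip{x}{\one}$ and $\hat y = y/\ip{y}{\one}$. Rescaling multiplies every ratio $y^{\downarrow}_i/x^{\downarrow}_i$ by the same positive constant, so the ratio hypothesis still holds for $\hat x,\hat y$. For the majorization it therefore suffices to treat the case $\sum_i x_i = \sum_i y_i = 1$ and show that the partial sums satisfy $X_k := \sum_{i\le k} x^{\downarrow}_i \le Y_k := \sum_{i\le k} y^{\downarrow}_i$ for every $k$.

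Set $c_i = y^{\downarrow}_i/x^{\downarrow}_i$, which is non-increasing in $i$. Since $\sum_i x^{\downarrow}_i c_i = 1 = \sum_i x^{\downarrow}_i$, the $c_i$ form a weighted family with weighted mean $1$. Hence there is an index $m$ with $c_i\ge 1$ for $i\le m$ and $c_i\le 1$ for $i>m$.
\begin{itemize}
\item For $k\le m$, each term satisfies $y^{\downarrow}_i\ge x^{\downarrow}_i$, so $Y_k\ge X_k$ directly.
\item For $k>m$, pass to the tail: $1-Y_k = \sum_{i>k} y^{\downarrow}_i \le \sum_{i>k} x^{\downarrow}_i = 1-X_k$, because $c_i\le 1$ for every $i>k$.
\end{itemize}
This proves $\hat x\prec\hat y$.

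For the second statement, define $\Phi(r) = \log \ip{x^{\circ r}}{\one} - \log\ip{y^{\circ r}}{\one}$, so the quantity in the theorem is $\exp(\Phi(r)/r)$. The plan is to show that $\Phi(r)/r$ is non-increasing.

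At $r=0$ both sums equal $n$, so $\Phi(0)=0$ and $\Phi(r)/r$ is the slope of the chord of $\Phi$ from $0$ to $r$. It is therefore enough to show that $\Phi$ is concave. Using the derivative $\Phi'(r) = \mathbb E_{p_r}[\log x] - \mathbb E_{q_r}[\log y]$ with tilted distributions $p_r\propto x^{\circ r}$ and $q_r\propto y^{\circ r}$, concavity would follow from $\Phi'' = \operatorname{Var}_{p_r}(\log x) - \operatorname{Var}_{q_r}(\log y) \le 0$.

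That variance inequality is the main obstacle. It is plausible because $\log y^{\downarrow}_i - \log x^{\downarrow}_i$ is non-increasing in $i$ while both sequences are sorted in the same order, so $\log y$ is ``more spread'' than $\log x$. However, the two variances are taken under different tilts, so the comparison is not immediate.

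The fallback I would try first is to avoid derivatives altogether. Apply the majorization just proved to the pair $x^{\circ s}, y^{\circ s}$: the ratios $(y^{\downarrow}_i/x^{\downarrow}_i)^s$ are still non-increasing for $s>0$. Then use the Schur-convexity of $z\mapsto\sum_i z_i^{t}$ for $t=r/s>1$. This gives
\[
\frac{\ip{x^{\circ r}}{\one}}{\ip{x^{\circ s}}{\one}^{r/s}} \le \frac{\ip{y^{\circ r}}{\one}}{\ip{y^{\circ s}}{\one}^{r/s}},
\]
which rearranges exactly to
\[
\left(\frac{\ip{x^{\circ r}}{\one}}{\ip{y^{\circ r}}{\one}}\right)^{1/r}\le\left(\frac{\ip{x^{\circ s}}{\one}}{\ip{y^{\circ s}}{\one}}\right)^{1/s}
\qquad\text{for } r>s>0.
\]
This handles positive $r$. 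The case of negative exponents would use $t<0$, where $z\mapsto\sum_i z_i^{t}$ is again Schur-convex; the cases $0<t<1$ and crossings through $0$ would need the power-mean inequality, or an appeal to the cited Marshall--Olkin--Proschan theorems themselves.

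For strictness, the Schur-convex function $\sum_i z_i^t$ with $t\ne 0,1$ is strictly Schur-convex. So equality forces $\hat x^{\downarrow} = \hat y^{\downarrow}$, that is, $x^{\downarrow}$ parallel to $y^{\downarrow}$. Conversely, if $x^{\downarrow}$ is parallel to $y^{\downarrow}$, the ratio is clearly constant in $r$.
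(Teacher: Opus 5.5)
The paper gives no proof of this statement; it quotes Marshall, Olkin and Proschan. Your majorization argument, using the crossing index $m$ for the non-increasing ratios $c_i$, is correct. Your fallback for $r>s>0$ is also correct: the hypothesis passes to $x^{\circ s},y^{\circ s}$, and strict Schur-convexity of $z\mapsto\sum_i z_i^{r/s}$ gives both the monotonicity and the parallelism criterion. This is the natural majorization route. It covers everything the paper actually uses, namely the exponents $2$ and $4$ in Lemma \ref{majorization}. It even tolerates the zero-padded $y$ used there, as long as $x$ is positive.

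Two things need repair.

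First, the concavity plan is not merely hard but false. Take $x=(e,1)$ and $y=(e^{10},1)$, whose ratios $e^{9}\ge 1$ satisfy the hypothesis. Then $\Phi''(1)=\frac{e}{(e+1)^2}-\frac{100e^{10}}{(e^{10}+1)^2}>0$, so $\Phi$ is not concave, and monotonicity of $\Phi(r)/r$ cannot come from concavity.

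Second, your treatment of nonpositive exponents is incomplete and assigns the tools to the wrong cases. A positive base $s$ with $t=r/s<0$ settles exactly the crossing $r<0<s$; no power-mean inequality is needed. However, no positive base can compare two negative exponents. For $r<s<0$, take the base $s$ itself:
\begin{itemize}
\item Because $z\mapsto z^{s}$ reverses order, the $i$-th largest entries of $x^{\circ s}$ and $y^{\circ s}$ are $(x^{\downarrow}_{n+1-i})^{s}$ and $(y^{\downarrow}_{n+1-i})^{s}$.
\item Their ratio $c_{n+1-i}^{\,s}$ is again non-increasing in $i$, so the normalized majorization still holds.
\item Now $t=r/s>1$, and taking the $r$-th root with $r<0$ flips the inequality into the required direction.
\end{itemize}
Equivalently, the pair $x^{\circ(-1)},y^{\circ(-1)}$ satisfies the hypothesis, and the quantity for $x,y$ at $-\rho$ is the reciprocal of the quantity for this new pair at $\rho$.

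Strictness carries over because $\sum_i z_i^t$ is strictly Schur-convex for $t>1$ and for $t<0$. The case $0<t<1$ never arises. If $r=0$ is to be included, define the value there as the ratio of geometric means and use continuity.
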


Given a tree $T$ rooted at a vertex $r$, the \textsl{height} of $T$ is the maximum distance from a vertex in $T$ to $r$. For any $\tau>0$, define
\[\theta_1(T, r, \tau ) = \theta_1(L(T) + \tau E_{rr}).\]
Note that $L(T)+\tau E_{rr}$ is positive definite, and its smallest eigenvalue is simple with a positive eigenvector. We will need the following formula, due to Kirkland, Neumann and
Shader \cite{Kirkland1996}, for the inverse of $L(T) + \tau E_{rr}$.

\begin{theorem}\cite{Kirkland1996}\label{path_formula}
Let $T$ be a tree rooted at $r$. For any two vertices $u$ and $v$ in $T$, let $P_{ru}$ and $P_{rv}$ denote the paths from $r$ to $u$ and from $r$ to $v$, respectively. Then
\begin{equation*}
	\left(L(T)+\tau E_{rr}\right)^{-1}_{uv}
	=\frac{1}{\tau}+\abs{E(P_{ru})\cap E(P_{rv})}.
\end{equation*}
\end{theorem}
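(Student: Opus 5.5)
We prove the formula by showing that the matrix $M$ with entries $M_{uv}=\frac{1}{\tau}+\abs{E(P_{ru})\cap E(P_{rv})}$ satisfies $(L(T)+\tau E_{rr})M=I$. Since $L(T)+\tau E_{rr}$ is positive definite, and hence invertible, this identifies $M$ as its inverse. We organise the check column by column: fix $v$ and set $m=Me_v$, so that $m_u=\frac{1}{\tau}+c(u)$, where $c(u)=\abs{E(P_{ru})\cap E(P_{rv})}$. The integer $c(u)$ is the depth in $T$ of the vertex where the paths $P_{ru}$ and $P_{rv}$ diverge, that is, the depth of the last vertex of $P_{ru}$ that lies on $P_{rv}$. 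It therefore suffices to show $\bigl((L(T)+\tau E_{rr})m\bigr)_u=\delta_{uv}$ for every vertex $u$.

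We first check a vertex $u\ne r$. There $\bigl((L(T)+\tau E_{rr})m\bigr)_u=\sum_{w\sim u}(m_u-m_w)$, and the constant $\frac{1}{\tau}$ cancels, leaving $\sum_{w\sim u}(c(u)-c(w))$. Let $u^-$ be the parent of $u$. There are three cases.
\begin{enumerate}[(i)]
\item If $u\notin P_{rv}$, then every neighbour $w$ of $u$ has the same divergence vertex as $u$, so $c(w)=c(u)$ and the sum is $0$.
\item If $u\in P_{rv}$ and $u\neq v$, then $c(u)=\operatorname{depth}(u)$. For the parent, $c(u^-)=c(u)-1$. Exactly one child of $u$ lies on $P_{rv}$, and it contributes $c(u)-(c(u)+1)=-1$. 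Every other child has $c=c(u)$ and contributes $0$. The sum is $1-1=0$.
\item If $u=v$, then the parent contributes $1$ and every child has $c=c(v)$, so the sum is $1=\delta_{uv}$.
\end{enumerate}

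Next we check the root $u=r$. Here $c(r)=0$, and $c(w)=0$ for every child $w$ of $r$ except the one on $P_{rv}$, if such a child exists, which has $c=1$. If $v\ne r$, the row gives $\sum_{w\sim r}(c(r)-c(w))+\tau m_r=-1+\tau\cdot\frac{1}{\tau}=0$. If $v=r$, all the $c$ values vanish and the row gives $\tau\cdot\frac{1}{\tau}=1$. In both cases this equals $\delta_{rv}$.

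These checks give $(L(T)+\tau E_{rr})M=I$, so $M=(L(T)+\tau E_{rr})^{-1}$, which is the stated formula. There is no real obstacle in this argument; the only step needing care is case (ii), where the children of $u$ on and off $P_{rv}$ must be tracked separately.
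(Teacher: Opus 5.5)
Your verification is correct. The lowest-common-ancestor description of $\abs{E(P_{ru})\cap E(P_{rv})}$ is right, and cases (i)--(iii) together with the root row do give $(L(T)+\tau E_{rr})M=I$. Since the matrix is square, a right inverse is automatically the inverse, so you do not need positive definiteness; your computation in fact shows invertibility for every $\tau\neq 0$. When you write it up, add a one-line justification for case (i): because $u$ is not an ancestor of $v$, no child of $u$ is one either, and removing $u$ from its own ancestor set does not change the set of common ancestors with $v$.

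The paper gives no proof of this statement; it simply imports the formula from Kirkland, Neumann and Shader. The usual derivation treats the formula as a bottleneck-matrix identity rather than verifying a guessed inverse:
\begin{enumerate}[(i)]
\item Attach a new vertex $a$ to $r$ by an edge of weight $\tau$. Then $L(T)+\tau E_{rr}$ is the Laplacian of this weighted tree with row and column $a$ deleted.
\item Write this matrix as $NWN^T$, where $N$ is the square, invertible reduced incidence matrix (edges oriented toward $a$) and $W$ is the diagonal matrix of edge weights.
\item The entry $(N^{-1})_{e,u}$ equals $1$ exactly when $e$ lies on the path from $u$ to $a$, and $0$ otherwise.
\item Hence $N^{-T}W^{-1}N^{-1}$ has $(u,v)$-entry $\sum_{e\in P_{ua}\cap P_{va}} 1/w(e) = \frac{1}{\tau}+\abs{E(P_{ru})\cap E(P_{rv})}$.
\end{enumerate}
That route explains where the formula comes from. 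It exhibits the inverse as a weighted Gram matrix of path-indicator vectors, which makes nonnegativity and the monotonicity used later in Lemma \ref{majorization} transparent, and it handles arbitrary edge weights at no extra cost. Your route requires knowing the answer in advance, but it is shorter, fully elementary, and self-contained.
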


To rule out the remaining candidates for the second laziest tree, we prove a majorization result on the eigenvectors of $\theta_1(T,r,\tau)$.

\begin{lemma}\label{majorization}
 Let $T$ be a tree on $n$ vertices rooted at vertex $r$ with height $h\ge 1$. Let $P_h$ and $P_{h+1}$ be two paths rooted at their endpoints $r_0$ and $r_1$, respectively. Given $\tau>0$, let $x, y, z\in \re^n$ be, respectively, the unit positive eigenvector of $\theta_1(T, r, \tau)$, the unit positive eigenvector of $\theta_1(P_{h+1}, r_1, \tau)$ padded with zeros, and the unit positive eigenvector of $\theta_1(P_h, r_0, 1)$ padded with zeros. Then the following hold.
 \begin{enumerate}[(i)]
 	\item $y\circ y$ majorizes $x\circ x$, and 
 	\[\ip{x\circ x}{x\circ x}\le \ip{y\circ y}{y\circ y}.\]
 	Moreover, the majorization and inequality are tight if and only if the rooted tree $(T, r)$ is isomorphic to the rooted path $(P_{h+1}, r_1)$.  
 	\item $z\circ z$ strictly majorizes $y\circ y$, and 
 	\[\ip{y\circ y}{y\circ y}<\ip{z\circ z}{z\circ z}.\]
 \end{enumerate}
\end{lemma}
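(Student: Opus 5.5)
The plan is to compare the three positive eigenvectors level by level and then apply Theorem \ref{dec_ratio}, so everything reduces to checking a decreasing-ratio condition between sorted vectors.

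\textbf{Step 1: reduce $x$ to a level profile.} Root $T$ at $r$ and let $n_k$ be the number of vertices at depth $k$, for $k=0,\dots,h$. Write $M=(L(T)+\tau E_{rr})^{-1}$. The Perron vector $x$ of $M$ is the limit of $M^j\one$. By Theorem \ref{path_formula}, $M_{uv}$ depends only on the depth of the meeting point of $P_{ru}$ and $P_{rv}$. From this, induction on $j$ should show that the entries of $M^j\one$ are positive and non-increasing along every root-to-leaf path. Hence $x$ is non-increasing from the root outward. The same holds for $y$ and $z$ on their paths, with index $k$ equal to the distance from the root.

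\textbf{Step 2: compare $x$ and $y$ via ratios.} Let $x_{(k)}$ be the entries of $x$ along a longest root-to-leaf path $r=v_0,\dots,v_h$. I would compare this path with $y_0\ge\dots\ge y_h$ on $P_{h+1}$. Each eigen-equation is a second-order recurrence started at the root with the same boundary term $\tau$. In the tree the recurrence has extra outflow, since the children at a given depth are several, and its eigenvalue satisfies $\theta_1(T,r,\tau)\le\theta_1(P_{h+1},r_1,\tau)$ by interlacing (the path is a subtree containing the root). A Sturm-type comparison of the two recurrences should then give that the ratio $y_k/x_{(k)}$ is non-increasing in $k$.

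\textbf{Step 3: handle the vertices off the longest path.} The vertices of $T$ not on the chosen path are matched to the zero padding of $y$. Once the positive entries are sorted, the ratio sequence (with $y$'s zeros placed at the end) is still decreasing. Theorem \ref{dec_ratio} needs positive entries, so I would apply it to $y+\epsilon\one$ and let $\epsilon\to0$, or else use its majorization conclusion directly. It gives $x\circ x\prec y\circ y$, using that both are unit vectors so the sums are $1$.

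\textbf{Step 4: the inequality and equality case in (i).} The sum of squares $\ip{w}{w}$ is Schur-convex, so $\ip{x\circ x}{x\circ x}\le\ip{y\circ y}{y\circ y}$ follows from the majorization. If $T\ne P_{h+1}$, then $T$ has a vertex off the longest path, so $x$ has more than $h+1$ nonzero entries while $y$ has exactly $h+1$. This makes the majorization non-tight, and since the map is strictly Schur-convex the inequality is strict. Conversely, if $(T,r)\cong(P_{h+1},r_1)$ then $x=y$.

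\textbf{Step 5: part (ii).} Compare $P_{h+1}$ with root weight $\tau$ against $P_h$ with root weight $1$. The extra root vertex of $P_{h+1}$ behaves like a Robin-type boundary, and eliminating it produces a weight $\tau/(1+\tau)<1$ on the next vertex. So $y$ restricted after its first entry resembles the $P_h$ vector with weight less than $1$. The proposed route is to show $z_k/y_k$ is strictly decreasing: $z$ lives on $h$ vertices with the same type of recurrence but a larger eigenvalue, hence decays faster. Strictness comes from supports of different size. Majorization and the strict Schur-convexity inequality then follow as in Step 4.

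The main obstacle is the ratio monotonicity in Steps 2 and 5. It requires careful control of the three-term recurrences with different eigenvalues and boundary weights. My first attempt would be to express each ratio $y_{k+1}/y_k$ as a continued fraction in $\theta$ and show monotonicity in $\theta$ and in the branching numbers.
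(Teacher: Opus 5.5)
There is a genuine gap, and it begins with an orientation error. The positive eigenvectors are smallest at the root and increase away from it. Your own formula shows this. As $u$ moves away from $r$ along a root-to-leaf path, $|E(P_{ru})\cap E(P_{rv})|$ can only grow, so $Mw$ is non-decreasing along such paths whenever $w\ge 0$. Equivalently, summing the eigen-equation over the subtree $T_v$ below a non-root vertex $v$ with parent $p$ gives $x_v-x_p=\theta_1(T,r,\tau)\sum_{w\in T_v}x_w>0$. Hence $y^{\downarrow}$ and $z^{\downarrow}$ read the paths from the free end back to the root. The root-aligned pairing of $z_k$ with $y_k$ in Step 5 is therefore not the pairing Theorem \ref{dec_ratio} uses: the sorted pairing aligns the free ends, and the zero of $z$ meets the root entry of $y$.

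More seriously, Steps 2--3 compare $x$ with $y$ only along one longest root-to-leaf path. Theorem \ref{dec_ratio} instead needs monotonicity of $y^{\downarrow}_i/x^{\downarrow}_i$ with $x$ sorted over all of $T$, and the largest entries of $x$ need not lie on a longest path. For example, take $\tau=1$ and let $r$ have two children: one starts a bare path on $h\ge 3$ vertices, the other carries $m$ pendant leaves. For fixed $h$ and large $m$:
\begin{enumerate}[(a)]
\item $\theta_1(T,r,1)$ is of order $1/m$;
\item the $m+1$ star-branch vertices carry entries close to $2x_r$;
\item every vertex on the unique longest path carries an entry close to $x_r$.
\end{enumerate}
A depth-by-depth Sturm comparison along that path then says nothing about $x^{\downarrow}_1,\dots,x^{\downarrow}_{m+1}$. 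So the assertion in Step 3 that the ratios stay monotone after sorting is unsupported. Also, along a path inside $T$ the eigen-equation is not a closed three-term recurrence, because of the off-path subtrees.

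The missing idea is to work with superlevel sets rather than paths. For $k<n$, let $S$ be the set of vertices carrying the $k$ largest entries of $x$. The root is the strict minimum, so $r\notin S$, and pairing the eigen-equation with $\chi_S$ kills the $\tau E_{rr}$ term:
\[
\theta_1(T,r,\tau)\sum_{i=1}^k x^{\downarrow}_i=\sum_{\substack{u\in S,\ w\notin S\\ u\sim w}}(x_u-x_w)\ge x^{\downarrow}_k-x^{\downarrow}_{k+1}.
\]
Equality holds for all $k\le h$ when $(T,r)$ is the endpoint-rooted path.

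You also need $\theta_1(T,r,\tau)\le\theta_1(P_{h+1},r_1,\tau)$. The right justification is that, by Theorem \ref{path_formula}, the inverse for the path is a principal submatrix of the inverse for $T$. Plain interlacing does not apply, because $L(P_{h+1})+\tau E_{r_1r_1}$ is not a principal submatrix of $L(T)+\tau E_{rr}$.

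With these two facts, a one-line induction gives the sorted ratio condition. For $1\le k\le h$, let $c=y^{\downarrow}_k/x^{\downarrow}_k$ and suppose $y^{\downarrow}_i\ge c\,x^{\downarrow}_i$ for all $i\le k$. Then
\[
y^{\downarrow}_{k+1}=y^{\downarrow}_k-\theta_1(P_{h+1},r_1,\tau)\sum_{i\le k}y^{\downarrow}_i\le c\Bigl(x^{\downarrow}_k-\theta_1(T,r,\tau)\sum_{i\le k}x^{\downarrow}_i\Bigr)\le c\,x^{\downarrow}_{k+1},
\]
and beyond index $h+1$ the ratios are $0$. This never uses where the top entries sit in $T$.

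Part (ii) is the same induction with the strict inequality $\theta_1(P_{h+1},r_1,\tau)<\theta_1(P_h,r_0,1)$. This holds because $L(P_h)+E_{r_0r_0}$ is the principal submatrix of $L(P_{h+1})+\tau E_{r_1r_1}$ obtained by deleting the root, so no Robin-type elimination is needed. Your support-size argument for the equality case in Step 4 is fine once the majorization is established.
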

\proof
Let $v$ be a vertex that attains the smallest entry in $x$. If $v\ne r$, then all neighbors of $r$ lie in the same branch at $r$, and so
\[\theta_1(T, r, \tau)x_v = e_v^TL(T)x = \sum_{u\sim v}(x_v-x_u) \le 0< \theta_1(T, r, \tau),\]
a contradiction. Hence $v=r$, that is, $x^{\downarrow}_n = x_r$. For any $k<n$, let $S$ be the set of vertices in $T$ that attain the largest $k$ entries in $x$. Then $r\notin S$, and so
\begin{align}
	\theta_1(T, r, \tau)\sum_{i=1}^k x^{\downarrow}_i&=\ip{\chi_S}{ \theta_1(T,r, \tau) x} \notag \\
	 &= \ip{\chi_S}{ L(T) x}+ \ip{\chi_S}{\tau E_{rr} x} \notag \\
	&=\ip{\chi_S}{ L(T) x} \notag \\
	&=\sum_{\substack{u\in S,\;w\notin S\\u\sim w}}(x_u-x_w)\notag \\
	&\ge x^{\downarrow}_k - x^{\downarrow}_{k+1}, \label{boundary}
\end{align}
Note that equality in \eqref{boundary} holds simultaneously for each $k=1,2,\cdots,h$ if and only if $T$ is a path $P_{h+1}$ rooted at an endpoint. Thus we also have
\[\theta_1(P_{h+1}, r_1,  \tau ) \sum_{i=1}^k y^{\downarrow}_i   = y^{\downarrow}_k - y^{\downarrow}_{k+1}.\]
Identify $P_{h+1}$ with a longest path in $T$ rooted at $r$. By Theorem \ref{path_formula}, 
\[(L(T)+\tau E_{rr})^{-1}[P_{h+1}]=(L(P_{h+1}) + \tau  E_{r_1r_1})^{-1}.\]
Therefore 
\[\frac{1}{\theta_1(T,r, \tau )} \ge  \frac{1}{\theta_1(P_{h+1},r_1, \tau )},\]
that is,
\[\theta_1(T,r, \tau ) \le  \theta_1(P_{h+1},r_1, \tau ).\]
An induction shows that 
\[\frac{y^{\downarrow}_1}{x^{\downarrow}_1} \ge \frac{y^{\downarrow}_2}{x^{\downarrow}_2}\ge \cdots\ge  \frac{y^{\downarrow}_{h+1}}{x^{\downarrow}_{h+1}}.\]
Since $y^{\downarrow}_i=0$ for $i>h+1$, we have
 \[\frac{(y^{\downarrow}_1)^2}{(x^{\downarrow}_1)^2} \ge  \frac{(y^{\downarrow}_2)^2}{(x^{\downarrow}_2)^2}\ge \cdots\ge  \frac{(y^{\downarrow}_n)^2}{(x^{\downarrow}_n)^2}.\]
 By Theorem \ref{dec_ratio} (i) and 
 \[\ip{x\circ x}{\one} = \ip{y\circ y}{\one}=1,\]
 we see that
 \[x\circ x \prec y\circ y,\]
 and
 \[\sqrt{	\frac{\ip{x\circ x}{x\circ x}}{\ip{y\circ y}{y\circ y}}} =\sqrt{\frac{\ip{x^{\circ 4}}{\one}}{\ip{y^{\circ 4}}{\one}}}\le \frac{\ip{x^{\circ 2}}{\one}}{\ip{y^{\circ 2}}{\one}}=1.\]

For the second majorization, note that $L(P_h)+E_{r_0r_0}$ can be obtained from $L(P_{h+1}) + \tau E_{r_1r_1}$ by deleting the $r_1$-th row and the $r_1$-th column. Hence
\[\theta_1(P_{h+1}, r_1,  \tau) < \theta_1(P_h, r_0, 1).\]
A similar induction using
\[\theta_1(P_{h+1},r_1,  \tau ) \sum_{i=1}^k y^{\downarrow}_i   = y^{\downarrow}_k - y^{\downarrow}_{k+1},\quad \theta_1(P_h,r_0,  1 ) \sum_{i=1}^k z^{\downarrow}_i   = z^{\downarrow}_k - z^{\downarrow}_{k+1},\]
shows that 
\[\frac{z^{\downarrow}_1}{y^{\downarrow}_1} > \frac{z^{\downarrow}_2}{y^{\downarrow}_2}> \cdots>  \frac{z^{\downarrow}_{h+1}}{y^{\downarrow}_{h+1}}.\]
Squaring each term and applying Theorem \ref{dec_ratio} again proves the second part of the statement.
\qed

One consequence of the above result is that, for any tree $T$ and eigenprojection $H$ of its algebraic connectivity, we have an upper bound for $\tr(H^{\circ 2}[B])$ in terms of $\tr(H[B])$, where $B$ is any component of the graph obtained from $T$ by removing the spectral center.

\begin{theorem}\label{extremal}
Let $T$ be a tree, and let $H$ be the $\theta_2(T)$-eigenprojection.
	\begin{enumerate}[(i)]
		\item If $T$ be a type I with characteristic vertex $a$, and $B$ is a branch at $a$ with height $h$, then
		\[\tr(H[B]^{\circ 2}) \le \frac{3}{2h+3} (\tr(H[B]))^2,\]
		with equality if and only if $B$ is a $P_{h+1}$ rooted at an endpoint.
		\item If $T$ be a type II tree with characteristic edge $ab$, and $B$ be the branch at $b$ rooted at $a$ with height $h$, then 
		\[\tr(H[B]^{\circ 2}) < \frac{3}{2h+1} (\tr(H[B]))^2.\]
	\end{enumerate}
\end{theorem}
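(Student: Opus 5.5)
The plan is to reduce both parts to Lemma \ref{majorization}. The first step is to identify $H[B]$ as a rank-one matrix, using the structure given by Theorem \ref{typeItypeII}. The second is to compute the fourth-moment ratio exactly for the extremal rooted path.

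\emph{Structure of $H[B]$.} In case (i), let $r$ be the neighbour of $a$ in $B$. Then $L[B]=L(B)+E_{rr}$, that is, $L[B]$ is $L(B)+\tau E_{rr}$ with $\tau=1$. By Theorem \ref{typeItypeII}(i), every Fiedler vector $z$ satisfies $L[B]z[B]=\theta_1(L[B])z[B]$. Since the smallest eigenvalue of $L(B)+\tau E_{rr}$ is simple with a positive eigenvector, $z[B]$ is a multiple of the unit positive eigenvector $x$ of $\theta_1(B,r,1)$. (If $B$ is passive, $H[B]=0$ and there is nothing to prove.) Writing $H=\sum_j z_jz_j^T$ over an orthonormal basis of the Fiedler space, we get $H[B]=c\,xx^T$ with $c=\tr(H[B])$. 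Hence
\[\tr(H[B]^{\circ 2})=c^2\ip{x\circ x}{x\circ x}.\]
In case (ii), $\theta_2(T)$ is simple, so $H=zz^T$. The branch $B$ at $b$ is rooted at $a$, and by Theorem \ref{typeItypeII}(ii), $z[B]$ is the Perron vector of $L(B)+\eta E_{aa}$. So again $H[B]=c\,xx^T$, where now $x$ is the unit positive eigenvector of $\theta_1(B,a,\eta)$.

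\emph{Comparison.} For (i), with $h\ge 1$, apply Lemma \ref{majorization}(i) with $\tau=1$. This gives $\ip{x\circ x}{x\circ x}\le\ip{y\circ y}{y\circ y}$, where $y$ is the Perron vector of $L(P_{h+1})+E_{r_1r_1}$. Equality holds exactly when $(B,r)\cong(P_{h+1},r_1)$. For (ii), apply Lemma \ref{majorization} with $\tau=\eta$: part (i) followed by the strict part (ii) gives $\ip{x\circ x}{x\circ x}<\ip{z\circ z}{z\circ z}$, where $z$ is the Perron vector of $L(P_h)+E_{r_0r_0}$. The degenerate cases need separate checks. If $h=0$ in (i), then $B=P_1$ and equality $c^2=\tfrac33c^2$ is immediate. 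If $h=0$ in (ii), then $B$ is a single vertex, so $\tr(H[B]^{\circ2})=c^2<3c^2$ trivially. When $h=1$ in (ii), the comparison path is $P_1$, which Lemma \ref{majorization} still covers.

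\emph{The path computation.} It remains to show that for the rooted path $P_{k}$ with $\tau=1$, the unit Perron vector $w$ satisfies $\ip{w\circ w}{w\circ w}=\tfrac{3}{2k+1}$. Taking $k=h+1$ gives $\tfrac3{2h+3}$, and $k=h$ gives $\tfrac3{2h+1}$.

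Index the vertices $1,\dots,k$ from the root, and add a ghost vertex $0$ with value $0$; this encodes the $+E_{rr}$ term. Reflect at the leaf by setting $w_{k+1}=w_k$. The eigen-equations then become the free recurrence $w_{j-1}+w_{j+1}=(2-\theta)w_j$. This recurrence is solved by $w_j\propto\sin(j\varphi)$, where $\varphi=\pi/(2k+1)$ is forced by the symmetry about $k+\tfrac12$; positivity of $\sin(j\varphi)$ for $1\le j\le k$ confirms that this is the Perron vector. Standard trigonometric sums give
\[\sum_{j=1}^{k}\sin^2(j\varphi)=\frac{2k+1}{4},\qquad \sum_{j=1}^k\sin^4(j\varphi)=\frac{3(2k+1)}{16}.\]
These are the half-period sums $\tfrac12\sum_{j=0}^{2k}$ of $\tfrac12-\tfrac12\cos 2j\varphi$ and of $\tfrac38-\tfrac12\cos2j\varphi+\tfrac18\cos4j\varphi$, in which the cosine sums vanish. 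The ratio is therefore $3/(2k+1)$.

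Most of the work is this closed-form evaluation, together with getting the boundary conditions right: the ghost zero at the root side and the reflection at the leaf. Everything else follows directly from Theorem \ref{typeItypeII} and Lemma \ref{majorization}.
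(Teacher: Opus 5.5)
Your proof is correct and follows the paper's argument: you reduce to $H[B]=\tr(H[B])\,xx^T$ via Theorem~\ref{typeItypeII}, apply Lemma~\ref{majorization} (chaining its parts (i) and (ii) in the type II case), and evaluate the $\sin^4$ sum for the rooted path; your ghost-vertex/reflection derivation of $w_j\propto\sin(j\pi/(2k+1))$ is a self-contained version of the paper's reading of $y$ off the Fiedler vector of $P_{2h+3}$, and your separate treatment of $h=0$ covers a case the paper skips. One harmless slip: in case (ii), Theorem~\ref{typeItypeII} makes $z[B]$ the Perron vector of $L[B]+\eta E_{aa}=L(B)+(1+\eta)E_{aa}$, so Lemma~\ref{majorization} should be applied with $\tau=1+\eta$ rather than $\tau=\eta$, which changes nothing because the lemma holds for every $\tau>0$.
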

\proof
Suppose first that $T$ is type I, with characteristic vertex $a$. As the inequality trivially holds when $H[B]=0$, we may assume $B$ is an active branch. Let $r$ be the neighbor of $a$ in $B$. By Theorem \ref{typeItypeII} (i), 
\[\theta_2(T) = \theta_1(L[B])=\theta_1(B, r, 1 ),\]
and so we may write
\[H[B] = \tr(H[B]) xx^T\]
for some unit eigenvector $x$ of $\theta_1(L[B])$. Thus, if $r_1$ is an endpoint of $P_{h+1}$, and $y$ is the positive unit eigenvector of $\theta_1(P_{h+1},r_1, 1)$, then the first part of Lemma \ref{majorization} implies 
\begin{align*}
	\tr(H[B]^{\circ 2})& = \ip{x\circ x} {x\circ x} (\tr(H[B]))^2 
	\le \ip{y\circ y} {y\circ y}  (\tr(H[B]))^2.
\end{align*}
To calculate $\ip{y\circ y}{y\circ y}$, note that $P_{2h+3}$ has 
\[\pmat{y' \\ 0 \\ -y}\]
as a Fiedler vector, where $y'$ is the reverse of $y$. By Theorem \ref{path},
\[y_k = -x_{h+2+k}=\frac{2}{\sqrt{2h+3}} \sin \left(\frac{k\pi}{2h+3}\right).\]
Hence
\[\ip{y\circ y}{y\circ y} = \frac{16}{(2h+3)^2}\sum_{k=1}^{h+1} \sin^4 \left(\frac{k\pi}{2h+3} \right)= \frac{16}{(2h+3)^2} \frac{3(2h+3)}{16} = \frac{3}{2h+3}.\]
It follows that 
\[\tr(H[B]^{\circ 2}) \le \frac{3}{2h+3} (\tr(H[B]))^2,\]
with equality if and only if $B$ is a $P_{h+1}$ rooted at $a$.

Now suppose $T$ is type II, with characteristic edge $ab$. By \ref{typeItypeII} (ii), 
\[\theta_2(T) = \theta_1(L[B]+\eta E_{aa}) = \theta_1(B, r, 1+\eta)\]
for some $\eta>0$. Let $r_0$ be an endpoint of $P_h$, and $z$ be the positive unit eigenvector of $\theta_1(P_h, r_0, 1)$. Then the second part of Lemma \ref{majorization} gives
\[	\tr(H[B]^{\circ 2}) = \ip{x\circ x} {x\circ x} (\tr(H[B]))^2 
< \ip{z\circ z} {z\circ z}  (\tr(H[B]))^2.\]
By a similar argument, the entries of $z$ are
\[z_k = \frac{2}{\sqrt{2h+1}} \sin \left(\frac{k\pi}{2h+1}\right),\]
and we have
\[\ip{z\circ z}{z\circ z} = \frac{3}{2h+1}. \tag*{\sqr53}\]

Recall that if a tree $T$ on $n\ge 19$ satisfies $\tr(T)\ge \tr(DS(n-3,1))$, then its algebraic connectivity is simple, and so removing the spectral center leaves two components that are active branches. Theorem \ref{extremal} together with Lemma \ref{3/7} showsthis that at least one of these components is short.

\begin{lemma}\label{short}
Let $T$ be a tree on $n\ge 19$ vertices with diameter at least $4$ and simple algebraic connectivity. If
\[\tr(T)\ge \tr(DS(n-3,1)),\]
then one of the following holds.
\begin{enumerate}[(i)]
\item 	Suppose $T$ is type I with characteristic vertex $a$. Let $B_+$ and $B_-$ be the active branches at $a$, with heights $h_+$ and $h_-$, respectively. Then
	\[\min\{h_+, h_-\} = 1.\]
\item 	Suppose $T$ is type II with characteristic edge $ab$. Let $B_+$ and $B_-$ be the components of $T-ab$, with heights $h_+$ and $h_-$, respectively. Then
	\[\min\{h_+, h_-\}\le 2.\]
\end{enumerate}
\end{lemma}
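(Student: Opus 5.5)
The plan is to combine Lemma \ref{3/7} with Theorem \ref{extremal}, using that $\theta_2(T)$ is simple. Let $H$ be the $\theta_2(T)$-eigenprojection, so $H=zz^T$ for a unit Fiedler vector $z$ and $\tr(H)=1$. Write $t_\pm=\tr(H[B_\pm])$. Every vertex outside $B_+\cup B_-$ contributes nothing to $H$. In type I this holds because $z_a=0$ and the passive branches carry no mass. In type II the two sides of $ab$ exhaust the vertex set. Hence $t_++t_-=1$, and
\[\tr(H^{\circ 2})=\tr(H[B_+]^{\circ 2})+\tr(H[B_-]^{\circ 2}).\]
Lemma \ref{3/7} gives $\tr(H^{\circ 2})>\frac{3}{7}$.

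\emph{Type I.} Suppose for contradiction that $h_+,h_-\ge 2$. Theorem \ref{extremal} (i) gives
\[\tr(H[B_\pm]^{\circ 2})\le \frac{3}{2h_\pm+3}\,t_\pm^2\le \frac{3}{7}\,t_\pm^2 .\]
Summing, $\tr(H^{\circ 2})\le \frac{3}{7}(t_+^2+t_-^2)\le \frac{3}{7}(t_++t_-)^2=\frac{3}{7}$, which contradicts Lemma \ref{3/7}. Heights are at least $1$ by the definition of branch height used in Theorem \ref{extremal}, where $B$ is rooted at the neighbour of $a$ and a single vertex has height $0$ in $B$ but is counted as height $1$ from $a$. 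So I will fix the height convention to match Theorem \ref{extremal}, and the conclusion is $\min\{h_+,h_-\}=1$.

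\emph{Type II.} Suppose $h_+,h_-\ge 3$. Theorem \ref{extremal} (ii), applied to each side (the two sides are symmetric, with the roles of $a,b$ and $\eta,1/\eta$ swapped), gives the strict bound
\[\tr(H[B_\pm]^{\circ 2})<\frac{3}{2h_\pm+1}\,t_\pm^2\le \frac{3}{7}\,t_\pm^2 .\]
The strict inequality needs each $t_\pm>0$, which holds since $z$ is nonzero on both sides of the characteristic edge. The same summation then gives $\tr(H^{\circ 2})<\frac{3}{7}$, again a contradiction, so $\min\{h_+,h_-\}\le 2$.

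The main obstacle is bookkeeping rather than estimation. First, the heights in the statement must match the heights in Theorem \ref{extremal}, which is why the thresholds come out as $2h+3\ge 7$ in type I and $2h+1\ge 7$ in type II. Second, I must confirm that the branch decomposition captures all of the mass of $H$ and that simplicity of $\theta_2$ yields exactly two active branches. That simplicity is supplied by Corollary \ref{tr1} together with Theorem \ref{typeItypeII}: the multiplicity equals the number of active branches minus one. With these checked, the proof is the two-line convexity estimate above.
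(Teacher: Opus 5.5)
Your argument is the same as the paper's. Simplicity of $\theta_2(T)$ gives exactly two active branches carrying all of $H$. Theorem \ref{extremal} then bounds each piece by $\frac{3}{7}\tr(H[B_\pm])^2$ once the heights are large enough, and $\tr(H[B_+])+\tr(H[B_-])=1$ yields $\tr(H^{\circ 2})\le\frac{3}{7}$, contradicting Lemma \ref{3/7}. Your type II case is fine.

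The gap is in type I. Your contradiction argument only yields $\min\{h_+,h_-\}\le 1$, and you then pass to $\min\{h_+,h_-\}=1$ without justification. In Theorem \ref{extremal} the branch is rooted at the neighbour $r$ of $a$; it is compared with $P_{h+1}$ rooted at an endpoint. So a single-vertex active branch has height $0$, and nothing in your argument excludes it. Your remark that heights are at least $1$ "by the definition of branch height used in Theorem \ref{extremal}" is therefore false under the convention you say you are fixing.

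You cannot repair this by measuring heights from $a$ instead. Under that convention, your hypothesis $h_\pm\ge 2$ corresponds only to height at least $1$ in Theorem \ref{extremal}. That gives the factor $\frac{3}{5}$ rather than $\frac{3}{7}$, and the contradiction disappears.

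The missing step is the observation the paper makes at the start of its proof: an active branch at the characteristic vertex has at least two vertices. To see this, suppose $B=\{r\}$ is active. Then $r$ is a leaf adjacent only to $a$, so $L[B]=(1)$, and Theorem \ref{typeItypeII} (i) gives $\theta_2(T)=\theta_1(L[B])=1$. But $T$ has diameter at least $4$, so $\theta_2(T)\le\frac{3-\sqrt{5}}{2}<1$ by Theorem \ref{diameter_bound}, a contradiction.

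With this, both heights are at least $1$, and your argument gives exactly $\min\{h_+,h_-\}=1$. The exact value matters downstream: the type I theorem that follows assumes one active branch has height exactly $1$.
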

\proof
First note that an active branch at a characteristic vertex must contain at least two vertices. Let $H$ be the $\theta_2(T)$-eigenprojection. If either condition in (i) and (ii) fails, then by Theorem \ref{extremal}, 
\[\tr(H[B_+]^{\circ 2})\le \frac{3}{7} \tr(H[B_+])^2,\quad \tr(H[B_-]^{\circ 2})\le \frac{3}{7} \tr(H[B_-])^2.\]
Since $\tr(H)=1$, we have
\begin{align*}
	\tr(H^{\circ 2})& \le  \frac{3}{7}( \tr(H[B_+])^2+ \tr(H[B_-])^2) \\
	&\le \frac{3}{7}( \tr(H[B_+])+ \tr(H[B_-]))\\
	&\le  \frac{3}{7} \tr(H) \\
	&=\frac{3}{7}.
\end{align*}
It follows from Lemma \ref{3/7} that $\tr(T)<\tr(DS(n-3,1))$.
\qed

We now rule out all remaining type I candidates.

\begin{theorem}
	 Let $\omega=\frac{3-\sqrt{5}}{2}$. Let $T$ be a type I caterpillar on $n\ge 19$ vertices with diameter at least $4$ and 
	\[\theta_2(T)\le \omega <\theta_3(T).\]
	 Suppose $T$ has exactly one nontrivial bouquet supported at $u$ with $p_u\ge 8$. Then 
	\[\tr(T)<\tr(DS(n-3,1)).\]
\end{theorem}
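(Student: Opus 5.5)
We argue by contradiction, using the spectral decomposition of $L(T)$ split according to the interval structure already established. Suppose $\tr(T)\ge \tr(DS(n-3,1))$. By Corollary \ref{tr1} the algebraic connectivity is simple, so the characteristic vertex $a$ has exactly two active branches $B_+$ and $B_-$. By Lemma \ref{short}(i), one of them, say $B_-$, has height $1$. An active branch of height $1$ hanging off $a$ is a star rooted at its center $r$, the neighbor of $a$. So $B_-$ consists of $r$ together with $k\ge 1$ leaves, and $\theta_2(T)=\theta_1(L[B_-])$. For a star $K_{1,k}$ with $\tau=1$ at the center, $\theta_1$ is the smaller root of $x^2-(k+2)x+1=0$. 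We also need $\theta_2\le\omega$, where $\omega$ is the smaller root of $x^2-3x+1$; this forces $k\ge 1$ and is automatic.

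The plan is to show that $\tr(H^{\circ 2})$ is too small unless the big bouquet (with $p_u\ge 8$) sits in $B_-$. I would split into two cases.

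\textbf{Case 1: $r=u$.} Then $B_-$ is the bouquet with $k=p_u\ge 8$ leaves. Let $x$ be the positive Fiedler vector on $B_-$. The eigen-equation at a leaf gives $x_{\text{leaf}}=x_r/(1-\theta_2)$, and $\theta_2\approx 1/(k+2)$ is small. Hence $x[B_-]$ is nearly uniform over $k+1\ge 9$ vertices, so $\tr(H[B_-]^{\circ 2})\le c\,\tr(H[B_-])^2$ with $c$ roughly $1/(k+1)$, well below $3/7$. Combining this with Theorem \ref{extremal}(i) for $B_+$, whose height is $h_+\ge 2$ since the diameter is at least $4$, gives $\tr(H^{\circ 2})\le \max\{3/(2h_++3),c\}\le 3/7$. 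This contradicts Lemma \ref{3/7}.

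\textbf{Case 2: $r\ne u$.} Then $r$ has at most one leaf neighbor, because $u$ supports the only nontrivial bouquet. So $B_-$ is a single edge $P_2$ rooted at an endpoint. The pieces on this branch are explicit: $\theta_2=\omega$ exactly, and Theorem \ref{extremal}(i) with $h=1$ gives $\tr(H[B_-]^{\circ 2})=\frac35\tr(H[B_-])^2$. The bouquet at $u$ then lies in $B_+$, and $u$ is adjacent to at least $8$ leaves. The eigen-equation at a leaf $v$ of $u$ gives $x_v=x_u/(1-\omega)$. Together with the majorization bound in Lemma \ref{majorization} for the $B_+$ part, this forces the mass of $x[B_+]$ to be spread over at least $9$ vertices with nearly equal weights. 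Consequently $\tr(H[B_+]^{\circ 2})\le c'\tr(H[B_+])^2$ with $c'$ small, say at most $1/9$. Writing $s=\tr(H[B_-])$, we have $\tr(H[B_+])=1-s$ and
\[\tr(H^{\circ 2})\le \tfrac35 s^2+c'(1-s)^2 .\]
Here I expect the main obstacle: this quadratic can exceed $3/7$ when $s$ is near $1$. We must therefore bound $s$ away from $1$. I would get this by comparing eigenvector entries across $a$. The eigen-equation at $a$, where $z_a=0$, gives $\sum_{w\sim a}z_w=0$, so $z_r$ equals the sum of $|z|$ over the neighbors of $a$ in $B_+$. Propagating the Perron vector of $L[B_+]$ from those neighbors outward to the $\ge 8$ leaves at $u$, with entries increasing away from $a$ by the monotonicity in the proof of Lemma \ref{majorization}, shows that the $B_+$ mass is at least a constant multiple of $z_r^2$ times $9$. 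This forces $s\le 1/2$ or so, and then the right-hand side is below $3/7$.

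Finally, if the quadratic bound alone is insufficient in some subcase, I would sharpen the argument by returning to the full expression \eqref{simple}. Specifically, I would include the further loss coming from $E_1$ and the other eigenprojections, bounded using Corollary \ref{cell-bound} with $q\ge 2$, and compare the result with the lower bound for $\tr(DS(n-3,1))$ in Theorem \ref{dsn-31}, using $n\ge 19$.
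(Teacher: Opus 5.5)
Your Case 1 is essentially the paper's main case. There, the bouquet branch is the star rooted at $u$, bounded via \eqref{1/pu} (or your direct computation), and the other branch is bounded by Theorem \ref{extremal}(i) with $h\ge 2$. However, your stated reason for $h_+\ge 2$ does not work, because the diameter can be realized through a passive branch at $a$. The correct reason is that Theorem \ref{typeItypeII}(i) gives $\theta_1(L[B_+])=\theta_2(T)=\theta_1(L[B_-])<\omega$. A height-one $B_+$ would be either a second nontrivial bouquet or a rooted $P_2$, whose $\theta_1$ equals $\omega$.

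The genuine gap is Case 2. Once $B_-$ is a rooted $P_2$, you get $\theta_2(T)=\omega$. The same identity $\theta_1(L[B_+])=\theta_2(T)$ then forces $B_+$ to be a rooted $P_2$ as well. A branch of height at least $2$ contains a rooted $P_3$, so by Theorem \ref{path_formula} (as in Lemma \ref{majorization}) it has $\theta_1\le 2(1-\cos\frac{\pi}{7})<\omega$. A star with $k\ge 2$ leaves also has $\theta_1<\omega$. So your assertion that the bouquet at $u$ lies in $B_+$ describes a configuration that cannot occur, and the configuration that does occur is never treated. For example, attach $p\ge 14$ leaves and two pendant paths of length $2$ to a vertex $a$. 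This tree satisfies every hypothesis and lands in your Case 2 with $u=a$, where $B_+$ is a rooted $P_2$ and your bound $c'\le\frac19$ fails (there $c'=\frac35$).

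The missing step is short. Since $z_a=0$ and $z$ vanishes on passive branches, the eigen-equation at $a$ gives $z_{r_+}=-z_{r_-}$. So the two halves of the Fiedler vector mirror each other, $\tr(H[B_\pm])=\frac12$, and $\tr(H^{\circ 2})=\frac35\cdot\frac14\cdot 2=\frac{3}{10}<\frac37$, contradicting Lemma \ref{3/7}. This is how the paper disposes of this case.

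Independently of this, your quantitative steps for Case 2 are only sketched: the near-uniform spreading over $9$ vertices, the bound $s\le\frac12$, and the fallback to Corollary \ref{cell-bound}. The fallback cannot work by itself. With one nontrivial bouquet, Corollary \ref{cell-bound} gives only $n-4+\frac{2}{p_u}+\frac{2p_u}{n}$, which exceeds $\tr(DS(n-3,1))$ when $p_u$ is close to $n$.
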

\proof
Assume, for a contradiction, that
\[\tr(T)\ge \tr(DS(n-3,1)).\]
Let $H$ be the $\theta_2$-eigenprojection. Let $B_+$ and $B_-$ be the active branches at the characteristic vertex, and suppose without loss of generality that $B_+$ has height $1$. If $B_+$ is a rooted $P_2$, then 
\[\omega = \theta_1(L[B_+]) = \theta_2(T) =\theta_1(L[B_-]),\]
which forces $B_-$ to be a $P_2$ as well. In this case,
\[\tr(H^{\circ 2}) \le \frac{3}{7},\]
contradicting Lemma \ref{3/7}. 
Hence $B_+$ is the nontrivial bouquet $S_{p_u+1}$ rooted at $u$. It follows that $B_-$ is  a rooted path of length at least $2$, and so
\[\tr(H[B_-]^{\circ 2})\le \frac{3}{7}.\]
For $p_u\ge 8$, reusing \eqref{1/pu} with $W=H$ gives
\[\tr(H[B_+]^{\circ 2})\le H_{vv} \tr(H[B_+])< \frac{1}{p_u} < \frac{3}{7},\]
which again contradicts Lemma \ref{3/7}.
\qed

Finally, we rule out all remaining type II candidates. 

\begin{lemma}
 Let $\omega=\frac{3-\sqrt{5}}{2}$.	Let $T$ be a type II caterpillar on $n\ge 19$ vertices with diameter at least $4$, and
		\[\theta_2(T)\le \omega <\theta_3(T).\]
		Suppose $T$ has exactly one nontrivial bouquet supported at $u$ with $p_u\ge 8$. 
		Then 
		\[\tr(T)<\tr(DS(n-3,1)).\]
\end{lemma}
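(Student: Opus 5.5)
Assume for a contradiction that $\tr(T)\ge \tr(DS(n-3,1))$, and let $H$ be the $\theta_2(T)$-eigenprojection. By Corollary \ref{tr1}, $\theta_2(T)$ is simple and $\tr(H)=1$. Let $ab$ be the characteristic edge, and let $B_+$, $B_-$ be the components of $T-ab$. By Lemma \ref{short}(ii) we may assume $B_+$ has height $h_+\le 2$ when rooted at its endpoint of $ab$. Lemma \ref{3/7} requires $\tr(H^{\circ 2})>\frac{3}{7}$. Since $\tr(H[B_+])+\tr(H[B_-])=1$ and $\tr(H^{\circ 2})$ is the sum of the two block contributions, it suffices to show that each block contributes at most $\frac{3}{7}$ times its trace. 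Then
\[\tr(H^{\circ 2})\le \tfrac37\bigl(\tr(H[B_+])+\tr(H[B_-])\bigr)=\tfrac37,\]
as in the proof of Lemma \ref{short}.

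For the tall side, if $h_-\ge 3$ then Theorem \ref{extremal}(ii) gives $\tr(H[B_-]^{\circ 2})<\frac{3}{7}\tr(H[B_-])^2\le \frac37\tr(H[B_-])$. The real work is the short side, and I split by where the big bouquet lies.

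\textbf{Case 1: $u\in B_+$.} Since $T$ is a caterpillar of height at most $2$ on this side, $B_+$ consists of the root (an endpoint of $ab$), possibly $u$, and the $p_u\ge 8$ leaves. I would apply \eqref{1/pu} with $W=H$. This gives $H_{uu}\le H_{vv}\le 1/p_u\le 1/8$ for $v\in P(u)$, and Lemma \ref{W_vv} controls the remaining vertices. Every diagonal entry of $H[B_+]$ is then small except possibly the root's. To bound the root, I would use the eigen-equation at the root together with the Cauchy--Schwarz argument of Lemma \ref{W_vv}, with $\omega$ replaced by $\theta_2\le\omega$. This should show every diagonal entry of $H[B_+]$ is at most $\frac37$, so $\tr(H[B_+]^{\circ2})\le\frac37\tr(H[B_+])$.

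\textbf{Case 2: $u\notin B_+$.} Here $B_+$ has no nontrivial bouquet, so it is a rooted path $P_2$ or $P_3$, or a small spider with single pendant leaves. Its diagonal entries of $H$ come from a Perron-type vector of $L[B_+]+\eta E_{aa}$. The leaf--support relation $(E_\lambda)_{uu}=(1-\lambda)^2(E_\lambda)_{vv}$ with $\lambda\le\omega$ forces the support entry to be at least $\omega$ times the leaf entry. Combined with the bound on $W_{vv}$ from the lemma giving $\tr(W^{\circ2})\le\frac35\tr(W)$, this yields the ratio bound directly. If the bound is not fine enough, I would fall back on $\omega<\theta_3(T)$ and edge interlacing (Lemma \ref{edge-interlacing}) to exclude the bad short configurations.

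If $h_-\le 2$ as well, both sides are short and the same case analysis applies to each. The diameter bound (at most $8$) and $n\ge19$ then force the $\ge 8$-leaf bouquet into one side.

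\textbf{Main obstacle.} I expect the hardest part to be bounding the root vertex of the short side, that is, showing its diagonal entry of $H$ is at most $\frac37$ when the root is an endpoint of $ab$. My first attempt would be Lemma \ref{W_vv}-style inequalities at the root. If these are too weak, I would use the explicit sign structure of Theorem \ref{typeItypeII}(ii) to compare with the extremal path computation $\frac{3}{2h+1}$, adjusted by the leaf mass $\ge 8/p_u$-weighting.
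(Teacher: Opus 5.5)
Your overall plan is to bound each side of $T-ab$ by $\frac37$ times its trace and then sum, using $\tr(H)=1$. That is what the paper does when the side carrying the bouquet has height at most $2$. Your proposal has two genuine gaps, though.

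\textbf{First gap: Case~2.} This is the main one: the short side $B$ carries no nontrivial bouquet. Once $P_1$ and $P_2$ are excluded (because $\theta_2(T)>\theta_1(L[B])\ge\omega$), $B$ is either $P_3$ rooted at an endpoint or $P_4$ rooted at an inner vertex. Local information cannot give $\tr(H[B]^{\circ 2})\le\frac37\tr(H[B])$ here.
\begin{itemize}
\item On $B=P_3$, the Fiedler vector is the Perron vector of $L(P_3)+\tau E_{bb}$ with $\tau=1+\eta$ or $1+1/\eta>1$, in the notation of Theorem~\ref{typeItypeII}(ii).
\item For $\tau=2$ this vector is $(2-\sqrt3,\sqrt3-1,1)$ (read from the root), with eigenvalue $2-\sqrt3<\omega$.
\item It satisfies the leaf--support relation and every $W_{vv}$ bound you cite. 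Yet $\tr(H[B]^{\circ2})=\frac12\tr(H[B])^2$, which exceeds $\frac37\tr(H[B])$ as soon as $\tr(H[B])>\frac67$.
\end{itemize}
Nothing in your plan controls how much of $\tr(H)=1$ sits on $B$.

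The paper avoids a trace bound entirely in the subcase where the bouquet side $B_+$ has height at least $3$. Then $B_-$ has height $2$, so $\theta_1(L[B_-])\ge\frac17$. The argument runs as follows.
\begin{itemize}
\item Suppose $u\neq a$, and let $C$ be the component of $T-a$ containing $u$.
\item The Rayleigh quotient of $\one$ gives $\theta_1(L[C])\le\frac{1}{p_u+1}<\frac17\le\theta_1(L[B_-])<\theta_2(T)$.
\item Interlacing gives $\theta_2(T)\le\theta_2(L\backslash a)\le\max\{\theta_1(L[C]),\theta_1(L[B_-])\}$, a contradiction.
\item Hence $u=a$, so $T$ contains $CP(1,0,8,0,1)$ and $\theta_3(T)<\omega$.
\end{itemize}
Your fallback (``use $\theta_3>\omega$ and edge interlacing'') lacks exactly this step of pinning the bouquet to the spectral center. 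Without it there is no fixed subtree to interlace with.

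\textbf{Second gap: Case~1.} Here you misdescribe the short side and aim at the wrong vertex. Besides a star rooted at $u$ or at a leaf, $B_+$ can also be $DS(p_u,1)$, rooted either
\begin{itemize}
\item at its degree-$2$ vertex, so that a leaf $s$ hangs off the root, or
\item at $u$, so that there is a pendant path $u$--$w$--$z$.
\end{itemize}
Lemma~\ref{W_vv} cannot certify $\frac37$ on these extra vertices. For a leaf it gives at best $1/(1+(1-\theta_2)^2)>\frac12$. For a degree-$2$ vertex it gives $2/(2+(2-\theta_2)^2)$, which is about $0.433>\frac37$ when $\theta_2=\omega$. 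When $B_+$ is rooted at $u$, the local equations give
\[
x_w=\frac{1-\theta}{1-3\theta+\theta^2}\,x_u,
\]
which blows up as $\theta\to\omega$. This is why the paper resorts to an exact computation via the equitable partition there.

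By contrast, the root, which you call the main obstacle, is routine. Either it is $u$ itself, or the eigen-equation at $u$ with $x_v=x_u/(1-\theta)$ gives
\[
x_r=\Bigl(1-\theta-\frac{p_u\theta}{1-\theta}\Bigr)x_u<x_u.
\]
Then $H_{rr}<H_{vv}\le\frac{1}{p_u}$ by \eqref{1/pu}, and likewise $x_s=x_r/(1-\theta)<x_v$.
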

\proof
Assume, for a contradiction, that
\[\tr(T)\ge \tr(DS(n-3,1)).\]
Let $H$ be the $\theta_2$-eigenprojection. Let $B_+$ and $B_-$ be the components of $T-ab$, with heights $h_+$ and $h_-$, respectively. Suppose without loss of generality that $B_+$ contains the nontrivial bouquet. Then $B_-$ is a rooted caterpillar with only trivial bouquets. 

We first narrow down the possible structures of $B_-$. 
If $B_-$ is $P_1$ or $P_2$, then
\[\theta_2(T)> \theta_1(L[B_-]) =2\left(1-\cos\frac{\pi}{2h_-+3}\right)\ge\omega,\]
contradicting our assumption. Hence $h_-\ge 2$. There are two possibilities.  If $h_-\ge 3$, then by Lemma \ref{short}, $h_+\le 2$. If $h_-=2$, then $B_-$ is either a $P_3$ rooted at an endpoint, or a $P_4$ rooted at an internal vertex, both of which satisfy
\begin{equation}\label{shortB-}
	\theta_1(L[B_-])\ge \frac{1}{7},\quad \tr(H[B_-]^{\circ 2})\le \frac{3}{7}.
	\end{equation}

We now narrow down the possible structures of $B_+$. If $h_+\le 2$, then $B_+$ is either (a) a $S_{p_u+1}$ rooted at the center, or (b) a $S_{p_u+2}$ rooted at a leaf, or (c) a $DS(p_u, 1)$ rooted at the degree-$2$ vertex, or (d) a $DS(p_u, 1)$ rooted at the branching vertex  (see Figure \ref{4cases}). 
\begin{figure}[htbp]
		\centering
		
\tikzset{
	casevertex/.style={
		circle,
		draw=black,
		fill=white,
		minimum size=14pt,
		inner sep=0pt,
		font=\small,
		line width=0.6pt
	},
	caseroot/.style={
		casevertex,
		path picture={
			\draw[black,line width=0.45pt]
			(path picture bounding box.center)
			circle[radius=5pt];
		}
	},
	casesupport/.style={casevertex},
	caserootsupport/.style={caseroot},
	caseedge/.style={
		draw=black,
		line width=0.6pt
	}
}
		\begin{subfigure}[t]{0.47\textwidth}
			\centering
			\begin{tikzpicture}[x=1.25cm,y=0.95cm]

				\path (0,-1.35)--(0,1.45);

				\node[caserootsupport] (u) at (0,0) {$u$};

				\node[casevertex] (l1) at (1.10, 1.15) {};
				\node[casevertex] (l2) at (1.40, 0) {};
				\node at (1.25,-0.58) {$\vdots$};
				\node[casevertex] (l3) at (1.10,-1.15) {};
				
				\draw[caseedge]
				(-1.15,0)--(u)
				(u)--(l1)
				(u)--(l2)
				(u)--(l3);
			\end{tikzpicture}
			\caption{\(S_{p_u+1}\) rooted at the center}
			\label{fig:star-center}
		\end{subfigure}
		\hfill
		%
		\begin{subfigure}[t]{0.47\textwidth}
			\centering
			\begin{tikzpicture}[x=1.25cm,y=0.95cm]
				\path (0,-1.35)--(0,1.45);
				
				\node[caseroot] (r) at (0,0) {$r$};
				\node[casesupport] (u) at (1.60,0) {$u$};

				\node[casevertex] (l1) at (2.50, 1.15) {};
				\node[casevertex] (l2) at (2.80, 0) {};
				\node at (2.65,-0.58) {$\vdots$};
				\node[casevertex] (l3) at (2.50,-1.15) {};
				
				\draw[caseedge]
				(-1.15,0)--(r)
				(r)--(u)
				(u)--(l1)
				(u)--(l2)
				(u)--(l3);
			\end{tikzpicture}
			\caption{\(S_{p_u+2}\) rooted at a leaf}
			\label{fig:star-leaf}
		\end{subfigure}
		
		\medskip
		
		\begin{subfigure}[t]{0.47\textwidth}
						\centering
			\begin{tikzpicture}[x=1.25cm,y=0.95cm]
				\path (0,-1.35)--(0,1.45);
				
				\node[caseroot] (r) at (0,0) {$r$};
				\node[casevertex] (s) at (0,1.20) {$s$};
				\node[casesupport] (u) at (1.60,0) {$u$};

				\node[casevertex] (l1) at (2.50, 1.15) {};
				\node[casevertex] (l2) at (2.80, 0) {};
				\node at (2.65,-0.58) {$\vdots$};
				\node[casevertex] (l3) at (2.50,-1.15) {};
				
				\draw[caseedge]
				(-1.15,0)--(r)
				(s)--(r)--(u)
				(u)--(l1)
				(u)--(l2)
				(u)--(l3);
			\end{tikzpicture}
			\caption{\(DS(p_u,1)\) rooted at the degree-\(2\) vertex}
			\label{fig:double-star-degree-two}
		\end{subfigure}
		\hfill
		%
		\begin{subfigure}[t]{0.47\textwidth}
		\centering
\begin{tikzpicture}[x=1.25cm,y=0.95cm]
	\path (0,-1.35)--(0,1.45);

	\node[caserootsupport] (u) at (0,0) {$u$};

	\node[casevertex] (v) at (1.30,0) {$w$};
	\node[casevertex] (w) at (2.60,0) {$z$};

	\node[casevertex] (l1) at (0.30,1.15) {};
	\node[casevertex] (l2) at (1.00,1.15) {};
	\node at (1.65,1.15) {$\cdots$};
	\node[casevertex] (l3) at (2.30,1.15) {};
	
	\draw[caseedge]
	(-1.15,0)--(u)
	(u)--(v)--(w)
	(u)--(l1)
	(u)--(l2)
	(u)--(l3);
\end{tikzpicture}
\caption{\(DS(p_u,1)\) rooted at the branching vertex}
\label{fig:double-star-branching}
		\end{subfigure}
		
		\caption{The four possibilities for \(B_+\) when \(h_+\le2\).
			The double circle marks the root, and the short segment to its left
			is the edge joining $B_+$ to $B_-$.}
		\label{4cases}
		
	\end{figure}
Reuse \eqref{1/pu} with $W=H$. Consider the first three cases. Let $v\in P(u)$. We have
\[H_{uu} \le H_{vv}\le \frac{1}{p_u}\le \frac{1}{8}.\]
Hence
\[\tr(H[\{u\}\cup P(u)]^{\circ 2}) < \frac{3}{7} \tr(H[\{u\}\cup P(u)]) <\frac{3}{7}.\]
This shows $\tr(H[B_+]^{\circ 2})<3/7$ in Case (a). For Cases (b) and (c), note that the eigen-equation at $u$ and the eigen-equation at $s$, if it exists, imply
\[H_{rr}<H_{vv},\quad H_{ss}<H_{vv}.\]
so we also have $\tr(H[B_+]^{\circ 2})<3/7$. For Case (d), an exact calculation using the equitable partition gives 
\[H_{ww}<\frac{3}{7},\quad H_{zz}<\frac{3}{7},\]
which again yields $\tr(H[B_+]^{\circ 2})<3/7$. Thus in all cases, $h_-=2$, and from the second part of \eqref{shortB-} we have
\[\tr(H^{\circ 2}) =\tr(H[B_+]^{\circ 2}) + \tr(H[B_-]^{\circ 2}) \le \frac{3}{7},\]
contradicting Lemma \ref{3/7}.

The only remaining possibility is $h_+\ge 3$ and $h_-=2$. If $u\notin\{a,b\}$, let $C$ be the component of $T-a$ containing $u$. Using the min-max theorem,  the fact that $p_u\ge 8$, and the first part of \eqref{shortB-}, we have 
\[\theta_1(L[C]) \le \frac{\ip{L[C]\one}{\one}}{\ip{\one}{\one}} \le \frac{1}{p_u+1}<\frac{1}{7} \le \theta_1(L[B_-]).\]
Thus
\[\theta_2(T) \le \theta_2(L\backslash a) \le \max\{\theta_1(L[C]), \theta_1(L[B_-])\} < \theta_2(T), \]
which is impossible. Therefore $u=a$. Since $h_+\ge 3$ and $p_u\ge 8$, the tree $T$ contains a caterpillar $CP(1,0,8,0,1)$, which is obtained from $P_7$ by adding $8$ leaf neighbors to the middle vertex. Thus
\[\theta_3(T)\le \theta_3(CP(1,0,8,0,1))<\omega.\]
a contradiction.
\qed

We conclude this section with our main theorem.

\begin{theorem}
	Let $T$ be a tree on $n$ vertices, and suppose $T\ne S_n$. Then
	\[\tr(T)\le \tr(DS(n-3,1)),\]
	with equality if and only if
	\[T = DS(n-3,1).\]
\end{theorem}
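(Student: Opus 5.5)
The plan is to assemble the main theorem from the chain of reductions already established in this section. For $n\le 18$ we rely on the computational verification mentioned at the start of the section, so we assume $n\ge 19$. Let $T\ne S_n$ be a tree on $n$ vertices and suppose, for a contradiction, that $\tr(T)\ge \tr(DS(n-3,1))$ and $T\ne DS(n-3,1)$.

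First, we dispose of small diameter. A tree of diameter at most $2$ is a star, which is excluded. A tree of diameter $3$ is a double star $DS(\ell,r)$ with $\ell+r=n-2$ and $\ell,r\ge 1$. By the monotonicity theorem for double stars, $\tr(DS(\ell,r))>\tr(DS(\ell+1,r-1))$ whenever $1\le \ell\le \frac n2-2$. Iterating this from the balanced end shows that $\tr(DS(\ell,r))<\tr(DS(n-3,1))$ unless $\{\ell,r\}=\{n-3,1\}$. (Note $DS(\ell,r)\cong DS(r,\ell)$, so we may take $\ell\le r$.) Hence $T$ has diameter at least $4$. The same conclusion also follows from Corollary \ref{unique_bouquet}.

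Next, we invoke the structural corollaries for diameter at least $4$. The corollary following Corollary \ref{tr1} shows that $T$ is a caterpillar with diameter at most $8$, having a unique nontrivial bouquet supported at some $u$ with $p_u\ge 8$. Corollary \ref{tr1} gives
\[\theta_2(T)\le \omega<\theta_3(T),\qquad \omega=\tfrac{3-\sqrt5}{2},\]
so the algebraic connectivity is simple. By Theorem \ref{typeItypeII}, $T$ is either type I or type II. The type I theorem and the type II lemma proved just above both conclude $\tr(T)<\tr(DS(n-3,1))$ under exactly these hypotheses, which is a contradiction. Therefore $\tr(T)\le\tr(DS(n-3,1))$, and equality forces $T=DS(n-3,1)$. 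The converse direction of the equality statement is trivial.

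The only genuinely delicate point is the equality case: the two final results exclude every $T$ of diameter at least $4$ outright, via strict inequality. The remaining work is to make sure strictness is preserved in the diameter-$3$ step, which holds because the double-star monotonicity is strict. Another point to watch is that the computational check for $n\le 18$ must certify both the inequality and uniqueness; we take this from the stated verification.
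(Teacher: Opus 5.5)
Your proposal is correct and is essentially the paper's own argument. The main theorem is the culmination of the section's chain: the computational check for $n\le 18$, then, for $n\ge 19$, Corollary~\ref{unique_bouquet}, Corollary~\ref{tr1} and the caterpillar corollary, which reduce to the hypotheses of the type I theorem and the type II lemma. The only cosmetic difference is that you handle the diameter-$3$ case mainly through the strict double-star monotonicity theorem rather than Corollary~\ref{unique_bouquet}, and either route works.
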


\section{Beyond connected graphs}
We extend some properties of laziness to disconnected graphs. Let $G_c(n)$ denote the set of graphs on $n$ vertices with $c$ components. Let $\comp{G}_c(n)$ denote the set of graphs whose complements lie in $G_c(n)$. Let $G(\seq{n}{1}{2}{c})$ denote the set of graphs with $c$ components, each of size $n_i$. Let $\comp{G}(\seq{n}{1}{2}{c})$ denote the set of graphs whose complements lie in $G(\seq{n}{1}{2}{c})$.
The following result shows that complementation preserves the ordering of graphs in these classes provided $c\ge 2$.

\begin{lemma}\label{relation_i}
	Let $X$ be any graph. Let $c$ be any integer with $2\le c\le n$. Let $\seq{n}{1}{2}{c}$ be positive integers that sum to $n$.
	\begin{enumerate}[(i)]
		\item $X$ is an $i$-th laziest graph in $G(\seq{n}{1}{2}{c})$ if and only if $\comp{X}$ is an $i$-th laziest graph in $\comp{G}(\seq{n}{1}{2}{c})$.
		\item $X$ is an $i$-th laziest graph in $G_c(n)$ if and only if $\comp{X}$ is an $i$-th laziest graph in $\comp{G}_c(n)$.
		\item $X$ is an $i$-th laziest graph in $G_1(n)\cap \comp{G}_c(n)$ if and only if $\comp{X}$ is an $i$-th laziest graph in $\comp{G}_c(n)$.
	\end{enumerate}
\end{lemma}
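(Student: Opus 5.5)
The whole argument rests on Lemma \ref{complement}: for any graph $Y$ on $n$ vertices with $c(Y)$ components whose complement has $\comp{c}(Y)$ components,
\[
\tr(\comp{Y}) = \tr(Y) - \frac{2}{n}\bigl(c(Y)-\comp{c}(Y)\bigr).
\]
The plan is to show that, within each of the three classes, this correction term is the same constant for every member. Then complementation is an order-preserving bijection between the class and its complement class, shifting every laziness value by the same amount. An order-preserving bijection that shifts all values uniformly sends the $i$-th laziest graph(s) of one class exactly to the $i$-th laziest graph(s) of the other, in both directions.

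The key observation is the standard fact that a graph and its complement cannot both be disconnected. If $Y$ is disconnected, any two vertices in different components are adjacent in $\comp{Y}$, and two vertices in the same component share a common neighbour in $\comp{Y}$ lying in another component. So $\comp{Y}$ is connected.

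\textbf{Parts (i) and (ii).} Take $X\in G(\seq{n}{1}{2}{c})$ or $X\in G_c(n)$ with $c\ge 2$. Then $X$ is disconnected, so $\comp{X}$ is connected, that is, $\comp{c}=1$. Lemma \ref{complement} gives
\[
\tr(\comp{X})=\tr(X)-\frac{2(c-1)}{n}
\]
for every $X$ in the class, a shift independent of $X$. The map $X\mapsto\comp{X}$ is by definition a bijection from $G(\seq{n}{1}{2}{c})$ onto $\comp{G}(\seq{n}{1}{2}{c})$, and from $G_c(n)$ onto $\comp{G}_c(n)$. It preserves strict and non-strict comparisons of laziness, so it preserves the rank of each graph. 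This gives both directions of the ``if and only if''.

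\textbf{Part (iii).} The only point needing care here is checking the bijection. We need that $G_1(n)\cap\comp{G}_c(n)=\comp{G}_c(n)$, that is, every graph whose complement has $c\ge 2$ components is automatically connected; this is again the fact above. So the class in (iii) coincides with $\comp{G}_c(n)$. As written, (iii) pairs the $i$-th laziest graph in $G_1(n)\cap\comp{G}_c(n)$ with its complement ranked in $\comp{G}_c(n)$, and since complements of members of $\comp{G}_c(n)$ lie in $G_c(n)$, I would read the intended statement as comparing $X$'s rank in $G_1(n)\cap\comp{G}_c(n)$ with $\comp{X}$'s rank in $G_c(n)$. That is exactly part (ii) with the roles of $X$ and $\comp{X}$ swapped.

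I expect no real obstacle. The only thing to state explicitly is that ``$i$-th laziest'' is defined via the ordering of distinct laziness values, possibly with ties, and a uniform additive shift preserves that ordering exactly.
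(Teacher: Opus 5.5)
Your proposal is correct and matches the paper's proof. Both arguments use Lemma \ref{complement} together with the fact that the complement of a disconnected graph is connected, which gives the uniform shift $\frac{2}{n}(c-1)$ under the bijection $X\mapsto\comp{X}$ and hence parts (i) and (ii); for (iii) the paper likewise relies only on the identification $G_1(n)\cap\comp{G}_c(n)=\comp{G}_c(n)$ (its one-line justification has the same complement slip you flagged in the statement), so your reading of (iii), under which it is either trivial or is (ii) with the roles of $X$ and $\comp{X}$ swapped, is the right one.
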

\proof
Let $X$ and $Y$ be two graphs in $G(\seq{n}{1}{2}{c})$. Since $c\ge 2$, both $\comp{X}$ and $\comp{Y}$ are connected. Hence by Lemma \ref{complement},
\[\tr(X) - \tr(\comp{X}) = \frac{2}{n}(c-1)= \tr(Y) - \tr(\comp{Y}),\]
from which (i) and (ii) follow. For (iii), note that when $c \ge 2$, we have $X\in G_1(n)\cap \comp{G}_c(n)$ if and only if $\comp{X}\in \comp{G}_c(n)$.
\qed

A direct calculation shows that among all laziest graphs in $G(n, m)$, the more unbalanced $(n, m)$, the lazier the graph.

\begin{lemma}\label{G(n,m)}
	For $n\ge m\ge 2$,
	\[\tr(K_n) + \tr(K_m) < \tr(K_{n+1}) + \tr(K_{m-1}).\]
\end{lemma}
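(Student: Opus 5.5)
We plan to use the closed formula $\tr(K_k) = k-2+\frac{2}{k}$, which holds for every $k\ge 2$, and also for $k=1$ if we check it directly: $\tr(K_1)=1 = 1-2+2$. So the formula is valid for all $k\ge 1$, which covers the case $m-1=1$.

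Substituting the formula into both sides, the linear parts cancel: $(n-2)+(m-2)$ on the left equals $(n+1-2)+(m-1-2)$ on the right. The claimed inequality therefore reduces to
\[\frac{2}{n}+\frac{2}{m} < \frac{2}{n+1}+\frac{2}{m-1}.\]
Rearranging, this is equivalent to
\[\frac{1}{n}-\frac{1}{n+1} < \frac{1}{m-1}-\frac{1}{m},\]
that is,
\[\frac{1}{n(n+1)} < \frac{1}{m(m-1)}.\]
Since $n\ge m\ge 2$, we have $n(n+1) > n\cdot n \ge m\cdot m > m(m-1) > 0$, so this last inequality holds strictly.

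Conceptually, the argument is just the strict convexity of the function $k\mapsto 2/k$: moving one vertex from the smaller clique to the larger one increases the sum $\frac{2}{n}+\frac{2}{m}$. The only points needing care are that the formula is valid at $k=1$ and that the inequality is strict. Both are settled by the direct check above, so we do not expect any step to be an obstacle.
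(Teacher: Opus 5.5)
Your proposal is correct and is the direct calculation the paper alludes to; the paper gives no further detail. Substituting $\tr(K_k)=k-2+\frac{2}{k}$, which you rightly check also holds at $k=1$, reduces the claim to $\frac{1}{n(n+1)}<\frac{1}{m(m-1)}$, and this holds strictly for $n\ge m\ge 2$.
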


It is shown in \cite{Godsil2023} that the laziest graph in $G(\seq{n}{1}{2}{c})$ is the disjoint union of complete graphs $K_{n_1}\cup K_{n_2} \cup \cdots K_{n_c}$. Using the above results, we find the laziest graph in the other three classes.

\begin{theorem}\label{4laziest}
	The following statements are true.
   \begin{enumerate}[(i)]
	\item \label{lazeist_comp_fact_1} The laziest graph in $G(\seq{n}{1}{2}{c})$ is the disjoint union of complete graphs
	\[K_{n_1} \cup K_{n_2} \cup \cdots \cup K_{n_c},\]
	with trace
	\[n-2c+\frac{2}{n_1} + \cdots + \frac{2}{n_c}.\]
	\item \label{lazeist_comp_fact_2}For $c\ge 2$, the laziest graph in $\comp{G}(\seq{n}{1}{2}{c})$ is the complete multipartite graph
	\[K_{n_1,n_2,\cdots, n_c},\]
	with trace
	\[n-2c+\frac{2}{n_1} + \cdots + \frac{2}{n_c}-\frac{2}{n}(c-1).\]
	\item \label{lazeist_comp_fact_3}The laziest graph in $G_c(n)$ is the disjoint union of complete graphs
	\[K_{n-c+1}\cup (c-1)K_1,\]
	with trace
	\[n-2+\frac{2}{n-c+1}.\]
	\item \label{lazeist_comp_fact_4}For $c\ge 2$, the laziest graph in $\comp{G}_c(n)$ is the complete multipartite graph
	\[K_{n-c+1,1,\cdots, 1},\]
	with trace
	\[n-2+\frac{2}{n-c+1}-\frac{2}{n}(c-1).\]
\end{enumerate}
\end{theorem}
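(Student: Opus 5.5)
Part (i) is the result of \cite{Godsil2023} quoted just above, so only the trace needs computing. By the union formula and $\tr(K_m)=m-2+\frac{2}{m}$,
\[\tr(K_{n_1}\cup\cdots\cup K_{n_c})=\sum_{i=1}^c\left(n_i-2+\frac{2}{n_i}\right)=n-2c+\sum_{i=1}^c\frac{2}{n_i}.\]
(For $n_i=1$ this reads $\tr(K_1)=1$, which is correct.)

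Part (ii) then follows from Lemma \ref{relation_i}(i) with $i=1$. Since $c\ge 2$, the complement of $K_{n_1}\cup\cdots\cup K_{n_c}$ is the complete multipartite graph $K_{n_1,\ldots,n_c}$, and it is connected. Lemma \ref{complement} with $\comp{c}=1$ gives
\[\tr(K_{n_1,\ldots,n_c})=\tr(K_{n_1}\cup\cdots\cup K_{n_c})-\frac{2}{n}(c-1),\]
which is the stated trace.

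For (iii), I would partition $G_c(n)$ by the multiset of component sizes, so that $G_c(n)$ is the union of the classes $G(n_1,\ldots,n_c)$. By (i), the laziest graph in $G_c(n)$ is the laziest of the graphs $K_{n_1}\cup\cdots\cup K_{n_c}$ over all compositions of $n$ into $c$ positive parts. Lemma \ref{G(n,m)} handles the optimisation. Whenever two parts satisfy $n_i\ge n_j\ge 2$, replacing $(n_i,n_j)$ by $(n_i+1,n_j-1)$ strictly increases the trace. Repeating this strictly increases the trace until at most one part exceeds $1$. The unique survivor is $K_{n-c+1}\cup(c-1)K_1$, so it is the strict maximum. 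Its trace is
\[n-2c+\frac{2}{n-c+1}+2(c-1)=n-2+\frac{2}{n-c+1}.\]
Alternatively, I could avoid Lemma \ref{G(n,m)} and just minimise $\sum 2/n_i$ directly under $\sum n_i=n$, $n_i\ge1$. Since $1/x$ is convex, the sum $\sum 1/n_i$ is Schur-convex, so it is maximised at the most unbalanced composition.

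For (iv), I would apply Lemma \ref{relation_i}(ii) with $i=1$ and $c\ge 2$. The laziest graph in $\comp{G}_c(n)$ is then the complement of $K_{n-c+1}\cup(c-1)K_1$, namely $K_{n-c+1,1,\ldots,1}$. Its trace is obtained from (iii) by subtracting $\frac{2}{n}(c-1)$, using Lemma \ref{complement} again with $\comp{c}=1$.

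The only step needing any care is the exchange argument in (iii): I must check that the process terminates and that the maximiser is unique. Termination holds because each exchange strictly increases the largest part, which is bounded by $n-c+1$. Uniqueness also needs the uniqueness of the laziest graph in each class $G(n_1,\ldots,n_c)$, and I take that from \cite{Godsil2023}.
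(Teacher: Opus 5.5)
Your proposal is correct and follows the paper's proof. Part (i) is quoted from \cite{Godsil2023}, parts (ii) and (iv) come from taking complements via Lemma \ref{complement} and Lemma \ref{relation_i}, and part (iii) reduces to the exchange inequality of Lemma \ref{G(n,m)}, which you spell out in more detail than the paper does.

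One small correction to your termination argument: an exchange increases the largest part only if you apply it to a largest part. Termination holds regardless, because the trace strictly increases over the finite set of compositions of $n$ into $c$ parts.
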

\proof
Statements (ii) and (iv) follow from (i) and (iii), respectively, by taking the complements. Statement (iii) follows from Lemma \ref{G(n,m)}.
\qed

This lets us compare the laziest graphs in $G_c(n)$ and $\comp{G}_c(n)$ with different number of components.

\begin{corollary}\label{ordering}
	 The following statements are true.
	\begin{enumerate}[(i)] 
		\item Among the laziest of $G_c(n)$, we have
		\[\tr(K_n) < \tr(K_{n-1}\cup K_1) < \cdots < \tr(n K_1).\]
		\item Among the laziest of $\comp{G}_c(n)$ where $c\ge 2$, we have
		\[\tr(K_{n-1, 1})> \tr(K_{n-2,1,1}) > \cdots > \tr(K_{\lceil \sqrt{n}\rceil, 1,\cdots, 1}),\] and
		\[\tr(K_{\lfloor \sqrt{n}\rfloor, 1, \cdots, 1})< \cdots < \tr(K_{2,1,\cdots, 1}) < \tr(K_n).\]
	\end{enumerate}
\end{corollary}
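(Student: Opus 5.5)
Both chains will be compared through the explicit trace formulas of Theorem \ref{4laziest}. By (iii) the laziest graph in $G_c(n)$ has trace
\[a_c := n-2+\frac{2}{n-c+1},\]
and by (iv) the laziest graph in $\comp{G}_c(n)$ with $c\ge 2$ has trace
\[b_c := n-2+\frac{2}{n-c+1}-\frac{2(c-1)}{n}.\]
For $c=1$ the laziest connected graph is $K_n$, with trace $n-2+\frac{2}{n}$. This equals $b_1$, so the formula for $b_c$ may be used uniformly. The plan is to treat $a_c$ and $b_c$ as functions of $c$ and read off monotonicity.

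For part (i), $a_c$ is strictly increasing in $c$ for $1\le c\le n$, since $n-c+1$ decreases. This gives
\[\tr(K_n)<\tr(K_{n-1}\cup K_1)<\cdots<\tr(nK_1)\]
at once; here $c=1$ gives $K_n$ and $c=n$ gives $nK_1$ with trace $n$.

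For part (ii), set $k=n-c+1$, the size of the large part. Up to the constant $n-2-\frac{2(n-1)}{n}$, the quantity $b_c$ becomes
\[g(k)=\frac{2}{k}+\frac{2k}{n}.\]
This is a convex function of $k>0$ with minimum at $k=\sqrt n$. So $g$ strictly decreases for $k\le\sqrt n$ and strictly increases for $k\ge\sqrt n$. To check the strict steps on integers, compute
\[g(k+1)-g(k)=\frac{2}{n}-\frac{2}{k(k+1)}.\]
This is positive exactly when $k(k+1)>n$ and negative when $k(k+1)<n$.

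Reading in increasing $k$ then gives both chains. For $k$ running from $\lceil\sqrt n\rceil$ up to $n-1$ (so $c=2$, the graph $K_{n-1,1}$), the trace strictly increases. Reading this in reverse gives the first chain $\tr(K_{n-1,1})>\cdots>\tr(K_{\lceil\sqrt n\rceil,1,\ldots,1})$. For $k$ running from $\lfloor\sqrt n\rfloor$ down to $2$, the trace strictly increases as $k$ decreases. This gives the second chain up to $K_{2,1,\ldots,1}$.

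The endpoint $K_n$ needs separate handling. For $c=n$ the graph $K_{1,\ldots,1}$ is $K_n$ itself, with $k=1$. Here $b_n=n-2+2-\frac{2(n-1)}{n}=n-2+\frac{2}{n}=\tr(K_n)$, consistent with the formula. The final inequality is then the step $k=1\to2$, i.e. $g(1)>g(2)$, which holds since $1\cdot 2<n$. The statement's $K_n$ on the right should thus be read as the $c=n$ member $K_{1,\ldots,1}$.

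The main obstacle is the behaviour near $\sqrt n$. The step from $k=\lfloor\sqrt n\rfloor$ to $\lceil\sqrt n\rceil$ carries no claimed ordering, but each chain must still be strict up to its own endpoint. This requires every step within a chain to have $k(k+1)\ne n$ on the correct side. In the first chain every step has $k\ge\lceil\sqrt n\rceil$, so $k(k+1)>n$ provided $k^2\ge n$. In the second chain every step has $k+1\le\lfloor\sqrt n\rfloor$, so $k(k+1)<(k+1)^2\le n$. Hence both chains are strict. When $n$ is a perfect square the two endpoints coincide, which is harmless.
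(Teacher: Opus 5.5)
Your proposal is correct and follows essentially the paper's route: both read the traces off Theorem~\ref{4laziest} and check monotonicity in $c$ (equivalently in $k=n-c+1$), with your finite difference $g(k+1)-g(k)=\frac{2}{n}-\frac{2}{k(k+1)}$ replacing the paper's derivative and making strictness at the integers near $\sqrt n$ explicit. There are two harmless slips: the additive constant is $n-4$, not $n-2-\frac{2(n-1)}{n}$, which does not matter since only differences of $g$ are used; and the $c=1$ remark should be dropped, since $K_n\notin\comp{G}_1(n)$ and part (iv) only covers $c\ge 2$ --- your argument in fact only uses $2\le c\le n$, with $K_n$ correctly identified as the $c=n$ member.
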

\proof
Statement (i) follows from direct comparison. Statement (ii) follows by taking the complements and derivative relative to $c$.
\qed

We now show that the third laziest connected graph on $n$ vertices is $K_{n-2,1,1}$. To start, we cite a collection of results on connected graphs with high Laplacian eigenvalue multiplicity.

\begin{theorem}\cite{Das2007}\label{n-2}
	Let $X$ be a connected graph on $n$ vertices. If some Laplacian eigenvalue of $X$ has multiplicity $n-2$, then $\comp{X}$ is disconnected.
\end{theorem}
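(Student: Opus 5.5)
The plan is to assume some Laplacian eigenvalue $\lambda$ of the connected graph $X$ has multiplicity $n-2$, and show that $X$ is a join. The multiplicities must sum to $n$. Since $0$ is simple ($X$ is connected), $L(X)$ has exactly three distinct eigenvalues:
\[0^{(1)},\quad \lambda^{(n-2)},\quad \mu^{(1)}.\]
(If $n-2=1$, then $n=3$, and the only connected graphs are $P_3$ and $K_3$. Both have disconnected complements, so we may assume $n\ge 4$.) Write $E_\mu = xx^T$ with $x$ a unit vector orthogonal to $\one$. Then
\[L = \lambda\left(I-\tfrac1n J\right) + (\mu-\lambda)xx^T.\]
Reading off the off-diagonal entries gives, for $u\ne v$,
\[-A_{uv} = -\tfrac{\lambda}{n} + (\mu-\lambda)x_ux_v.\]
So for every pair $u\ne v$, the product $x_ux_v$ takes one of only two values, $c_1$ (non-adjacent) or $c_0$ (adjacent), and these are distinct because $\mu\ne\lambda$.

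The key step is a combinatorial analysis of this constraint. Take three distinct vertices $u,v,w$. Then $x_u(x_v-x_w)$ lies in $\{0,\pm(c_1-c_0)\}$. I will show that the entries of $x$ take at most two distinct values, or else a structure forces a dominating vertex. Suppose $x$ has three distinct values $a,b,c$ on vertices. The products $ab, ac, bc$ must lie in a two-element set, so two of them coincide, say $ab=ac$. This forces $a=0$, since $b\ne c$. Now use the diagonal entries:
\[\deg(u) = \lambda\left(1-\tfrac1n\right)+(\mu-\lambda)x_u^2.\]
A vertex with $x_u=0$ has its products with all others equal to $0$. Hence it is adjacent either to all other vertices or to none, and connectivity gives all. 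Such a vertex is isolated in $\comp X$, so $\comp X$ is disconnected.

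Otherwise $x$ takes exactly two values $\alpha\ne\beta$ (it cannot be constant, because $x\perp\one$ and $x\neq 0$). The vertex set then splits into classes $V_\alpha$ and $V_\beta$. Adjacency depends only on whether a pair lies within $V_\alpha$, within $V_\beta$, or across. The across pairs all have the same product $\alpha\beta$, so they are either all adjacent or all non-adjacent. If they are all adjacent, $X$ is a join and $\comp X$ is disconnected. If they are all non-adjacent, $X$ is disconnected, contradicting the hypothesis. The case $\alpha\beta$ equal to one of $\alpha^2,\beta^2$ forces a zero entry, which was handled above. I expect the main obstacle to be organizing this case analysis cleanly, in particular ruling out configurations where repeated values interact in some subtle way. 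The argument above avoids that, since only pairwise products of distinct vertices matter.
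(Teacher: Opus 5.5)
Your argument is correct and complete. The paper does not prove this statement; it quotes it from Das, where it comes out of an explicit description of the connected graphs having a Laplacian eigenvalue of multiplicity $n-2$. Your route is self-contained and different.

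You write $L=\lambda(I-\frac{1}{n}J)+(\mu-\lambda)xx^T$, which is the same rank-one device the paper uses in the proof of Theorem~\ref{star}. This turns adjacency into the condition $x_ux_v\in\{c_0,c_1\}$ with $c_0\ne c_1$. The pigeonhole step on three distinct values then splits cleanly into two cases. In the first, a zero entry gives a dominating vertex. In the second, there are two classes, all cross products equal $\alpha\beta$, and connectivity forces a complete join.

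The citation gives more information than the paper needs at this point. Your argument gives exactly the structural fact used later (that $X$ is a join, so $\comp{X}$ is disconnected) with no external input. It also extends easily to the full list. Within a class all products equal $\alpha^2$ (resp.\ $\beta^2$), so each class is a clique or a coclique. Combining this with the spectrum of a join in the dominating-vertex case gives $X\in\{S_n,\,K_n-e,\,K_{n/2,n/2}\}$ for $n\ge 4$.

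Two cosmetic points. The diagonal formula for $\deg(u)$ is never used. The closing remark about $\alpha\beta\in\{\alpha^2,\beta^2\}$ is also unnecessary, since the two-class argument only needs that all cross pairs share the product $\alpha\beta$, whether or not some entry is zero.
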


\begin{theorem}\cite{Mohammadian2011, Yin2017}\label{n-3}
	Let $X$ be a connected graph on $n$ vertices. If some Laplacian eigenvalue of $X$ has multiplicity $n-3$, then either $\comp{X}$ is disconnected, or $X=C_5$, or $X=G_{n,r}$, which is obtained from the disjoint union of two copies of $K_r + (n/2-r)K_1$ by joining every vertex in one copy of $(n/2-r)K_1$ to every vertex in the other copy of $(n/2-r)K_1$.
\end{theorem}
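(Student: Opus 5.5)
The plan is to translate the multiplicity condition into a rank condition and then classify graphs by a small number of vertex types. If $\lambda$ has multiplicity $n-3$, then the matrix $M=L-\lambda I=(D-\lambda I)-A$ has rank $3$. First I would show $\lambda$ is an integer for $n$ large: for $n\ge 7$ we have $n-3>(n-1)/2$, so an algebraic conjugate of $\lambda$ would have the same multiplicity and exceed the $n-1$ nonzero eigenvalues available. The cases $n\le 6$ I would settle by direct enumeration, which is where $C_5$ (spectrum $0,\tfrac{5\pm\sqrt5}{2}$ each twice) appears. The remaining eigenvalues are $0$ and at most two others. Since $\comp{X}$ shares eigenvectors with $X$ (as in Lemma \ref{complement}), $L(\comp{X})$ has eigenvalue $n-\lambda$ with multiplicity at least $n-3$ as well. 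So the hypothesis is symmetric under complementation, and I may assume both $X$ and $\comp{X}$ are connected and aim to land on $C_5$ or $G_{n,r}$.

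The main combinatorial step is to use rank $3$ through vanishing minors. Every $4\times4$ minor of $M$ vanishes, and the row space of $M$ is $3$-dimensional. Row $u$ of $M$ equals $(\deg(u)-\lambda)e_u-Ae_u$. I would partition $V$ into classes of vertices with equal closed or open neighbourhoods, that is, twin classes. Two non-adjacent twins $u,v$ with $\deg=\lambda$ have equal rows. Adjacent twins with $\deg=\lambda-1$ also have equal rows. Vertices whose degree is not of the appropriate value contribute independent coordinate directions. Counting such directions against rank $3$ should show that all but a bounded number of vertices have degree in $\{\lambda,\lambda-1\}$, and that $V$ splits into at most four twin classes, each a clique or a coclique. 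Quotienting by these classes gives an equitable partition with a small quotient matrix $Q$ (at most $4\times4$). The condition becomes that $Q$, together with the twin eigenvalues $\deg$ or $\deg+1$ on each class, produces exactly one eigenvalue of multiplicity $n-3$.

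Finally I would run through the possible quotient graphs on at most four cells with connected complement. I expect paths, the $4$-cycle pattern, and a few others, and I would impose that every class's twin eigenvalue equals $\lambda$. Requiring both $X$ and $\comp{X}$ connected should kill every configuration except two copies of $K_r+(n/2-r)K_1$ with the two cocliques completely joined. The degree equations force the two halves to have equal size, which yields $G_{n,r}$. The main obstacle is the middle step: rigorously proving that rank $3$ forces at most four twin classes, since vertices of "wrong" degree and mixed adjacency patterns must be excluded by explicit minor computations. I would attack it by choosing a maximal set of vertices whose rows are independent, at most $3$ of them, and expressing every other row as a combination of these. The $0/1$ off-diagonal pattern then severely restricts the coefficients.
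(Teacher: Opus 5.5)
The paper does not prove this statement; it imports it from Mohammadian--Tayfeh-Rezaie and Yin. So your outline has to stand on its own, and as written it has a genuine gap at its central step. Your preliminary observations are correct: integrality of $\lambda$ once $n\ge 7$, invariance of the hypothesis under complementation, and the fact that false twins of degree $\lambda$ and true twins of degree $\lambda-1$ have equal rows in $M=L-\lambda I$. The gap is the claim that rank $3$ forces at most four twin classes. You assert it rather than prove it, and the mechanism you offer cannot give it. Twins of the ``wrong'' degree do put vectors $e_u-e_v$ into the row space of $M$, so such a class has at most $3$ vertices. But a vertex with no twin contributes no coordinate direction at all, so counting directions gives no bound on the number of classes.

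The rigorous form of your last remark runs as follows. Choose $B$ with $|B|=3$ and $M[B]$ nonsingular, which is possible since $M$ is symmetric of rank $3$. Then $M=M[V,B]\,M[B]^{-1}M[B,V]$. Hence two vertices outside $B$ with the same neighbourhood in $B$ have identical rows of $M$, so they are twins of degree $\lambda$ or $\lambda-1$. The empty neighbourhood is excluded by connectivity. This yields at most $7+3=10$ twin classes, not $4$. Cutting ten cells down to the four-cell configuration of $G_{n,r}$ is where the real classification work lies, and there you only say you ``expect'' the right survivors.

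The final step has a further hole: you cannot simply impose that every class has twin eigenvalue $\lambda$. Suppose the twin partition has $k$ cells of sizes $n_i$ with twin eigenvalues $\tau_i$, and the quotient has $\lambda$ with multiplicity $m_Q\le k-1$. Then the hypothesis is equivalent to $\sum_{\tau_i\neq\lambda}(n_i-1)=m_Q-k+3$, and the right side is positive exactly when $m_Q\ge k-2$. So you must also examine and exclude configurations in which a class of size $2$ or $3$ carries a different twin eigenvalue, compensated by a repeated quotient eigenvalue. Until both the reduction to few cells and this full case analysis are carried out, what you have is a reasonable strategy rather than a proof.
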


\begin{theorem}
	Let $X$ be a connected graph on $n$ vertices, and suppose $X\notin\{K_n, S_n\}$. Then
	\[\tr(X) \le \tr(K_{n-2,1,1}),\]
	with equality if and only if $X = K_{n-2,1,1}$.
\end{theorem}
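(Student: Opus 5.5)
Let $X$ be a connected graph on $n$ vertices with $X\notin\{K_n,S_n\}$. The plan is to split according to whether $\comp{X}$ is connected. The target value is, by Theorem \ref{4laziest}(iv) with $c=3$,
\[\tr(K_{n-2,1,1}) = n-2+\frac{2}{n-2}-\frac{4}{n}.\]

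\textbf{Case 1: $\comp{X}$ is disconnected.} Say $\comp{X}$ has $c\ge 2$ components. Then $X\in G_1(n)\cap\comp{G}_c(n)$. Since $X\ne K_n$ we have $c\le n-1$, so $\comp{X}\ne nK_1$.

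By Theorem \ref{4laziest}(iv), $\tr(X)\le \tr(K_{n-c+1,1,\ldots,1})$. If $c=2$, then Lemma \ref{complement} gives
\[\tr(X)=\tr(\comp{X})-\tfrac{2}{n}.\]
So maximizing $\tr(X)$ over $\comp{G}_2(n)$ is the same as ranking graphs in $G_2(n)$ by laziness. The laziest member of $G_2(n)$ is $K_{n-1}\cup K_1$, and its complement is $S_n$, which is excluded. We therefore need the second laziest graph in $G_2(n)$ to be lazier than $K_{n-2}\cup 2K_1$, up to the constant shift.

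The elements of $G_2(n)$ are $Y\cup Z$ with $|Y|+|Z|=n$, and $\tr(Y\cup Z)=\tr(Y)+\tr(Z)$. If $Y\cup Z\ne K_{n-1}\cup K_1$, then either the component sizes are $(n-1,1)$ with the large component a connected non-complete graph, or the sizes are $(n-k,k)$ with $k\ge 2$.
\begin{itemize}
\item In the first subcase, Theorem \ref{star} bounds the large component by $\tr(S_{n-1})$.
\item In the second subcase, the laziest choice is $K_{n-k}\cup K_k$, and Lemma \ref{G(n,m)} shows the best of these is $K_{n-2}\cup K_2$.
\end{itemize}
It then suffices to compare $\tr(S_{n-1})+1$ and $\tr(K_{n-2})+\tr(K_2)$ against $\tr(K_{n-2})+2$, and then subtract $\tfrac{2}{n}$. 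Note that $\tr(K_{n-2}\cup 2K_1)-\tfrac{4}{n}$ is exactly $\tr(K_{n-2,1,1})$.

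For $c\ge3$, Corollary \ref{ordering}(ii) handles the range $n-c+1\ge\lceil\sqrt n\rceil$ directly, since the values decrease in $c$ from $c=3$. For the small-part range I would check directly that $\tr(K_{m,1,\ldots,1})<\tr(K_{n-2,1,1})$ using the explicit formula; this is a routine comparison for $n$ large, since the trace there is about $n-2$ minus order-one terms. Equality in Case 1 should force $c=3$ and $X=K_{n-2,1,1}$, using the uniqueness in Theorem \ref{4laziest}.

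\textbf{Case 2: $X$ and $\comp{X}$ are both connected.} Here I would repeat the argument of Theorem \ref{star}. Since $X\ne K_n$, it has at least three distinct eigenvalues.
\begin{itemize}
\item If some eigenvalue has multiplicity $k$ with $2\le k\le n-4$, or if $X$ has at least four distinct eigenvalues, the refinement bound via Lemma \ref{refinement} gives
\[\tr(X)\le \frac{(n-1)^2+1}{n}-\frac{2k(n-1-k)}{n^2}\]
(or an analogous bound obtained by grouping eigenspaces). I must verify this is strictly below $\tr(K_{n-2,1,1})$.
\item If some eigenvalue has multiplicity $n-2$, then Theorem \ref{n-2} makes $\comp{X}$ disconnected, which is excluded in this case.
\item If some eigenvalue has multiplicity $n-3$, then Theorem \ref{n-3} gives $X=C_5$ or $X=G_{n,r}$. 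Their laziness can be computed directly; $C_5$ has $\tr(C_5)=9/5$, and for $G_{n,r}$ an equitable partition with Lemma \ref{eqtrace} gives the trace. Both should fall below the bound.
\end{itemize}

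\textbf{Main obstacle.} The step I expect to be hardest is the generic bound in Case 2. The Theorem \ref{star} estimate gives roughly $n-2-\tfrac{2k(n-1-k)}{n^2}+\ldots$. At $k=2$ this is about $n-2-\tfrac{4}{n}$, while the target is about $n-2-\tfrac{2}{n}$. So the estimate is not obviously small enough, and I would need to treat the extreme multiplicities $k=2$ and $k=3$ more carefully. For those I would bound $\tr(E^{\circ2})$ using the constraint $E\one=0$ on all nonzero eigenprojections simultaneously, rather than one eigenprojection at a time. If that is still insufficient, I would use small-$n$ computer verification together with an asymptotic inequality for large $n$.
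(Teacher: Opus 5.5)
Your Case 1 is essentially the paper's argument and works. The comparison you defer holds for all $n\ge 5$, not just for large $n$: $\tr(K_{m,1,\ldots,1})=n-4+\frac{2}{m}+\frac{2m}{n}$ is convex in $m$, so over $2\le m\le n-2$ it peaks at an endpoint, and $\tr(K_{2,1,\ldots,1})=n-3+\frac4n<\tr(K_{n-2,1,1})$. Also, in the $c=2$ paragraph you mean ``less lazy than''.

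The genuine gap is Case 2, and it is not a matter of sharpening constants. Your first bullet is false as stated. The single-eigenspace bound $\frac{(n-1)^2+1}{n}-\frac{2k(n-1-k)}{n^2}$ exceeds $\tr(K_{n-2,1,1})=n-2+\frac{2}{n-2}-\frac{4}{n}$ for \emph{every} $n$ when $k=1$ or $k=2$; at $k=2$ the excess is $\frac{8(n-3)}{n^2(n-2)}$. So an eigenvalue of multiplicity $2$, or four distinct eigenvalues (e.g.\ multiplicities $1,1,1,n-3$), yields nothing. Because the failure holds for every $n$, neither an asymptotic argument nor a finite computation can close it. ``Using $E\one=0$ on all projections simultaneously'' is not yet an argument.

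The missing idea is to group eigenspaces, and then classify what cannot be grouped.

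\textbf{Grouping.} Take any set $S$ of nonzero eigenvalues whose multiplicities sum to $k$. The projection $E_S=\sum_{\lambda\in S}E_\lambda$ still has $\tr(E_S)=k$ and $E_S\one=0$. So Lemma \ref{refinement} applied to $E_0+E_S+(I-E_0-E_S)$ gives the same bound as before. On $3\le k\le n-4$ the worst case is $k=3$, and $\frac{6(n-4)}{n^2}>\frac{4(n-3)}{n(n-2)}$ iff $n^2-12n+24>0$, i.e.\ $n\ge 10$. The cases $n\le 9$ are checked by computer.

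\textbf{What cannot be grouped.} If no such $S$ exists, the nonzero multiplicities must be $(1,n-2)$, $(2,n-3)$ or $(1,1,n-3)$. Indeed, if all of them lie in $\{1,2\}$, the running partial sums rise by at most $2$ and so hit $3$ or $4$. Otherwise some multiplicity is at least $n-3$. Only at this point do Theorems \ref{n-2} and \ref{n-3} finish the case, leaving $G_{n,r}$; $C_5$ lies in the computed range.

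\textbf{The graphs $G_{n,r}$.} Lemma \ref{eqtrace} cannot supply the contribution of $n/2$, because $x-n/2$ also divides $\phi(L,x)/\phi(Q,x)$: $n/2$ has multiplicity $n-4$ outside the quotient. Instead, read off the diagonal of $E_{n/2}$ from the blocks $I-\frac{1}{|C_j|}J$ together with the lifted quotient eigenvector $(s,-r,-r,s)$. Then bound
\[\tr(X)\le\tr(E_{n/2}^{\circ 2})+\tr((I-E_{n/2})^{\circ 2})=n-6+\frac{10}{n}+\frac1r+\frac1s\le n-5+\frac{10}{n}+\frac{2}{n-2}<\tr(K_{n-2,1,1}).\]
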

\proof
Let $c$ be the number of components in $\comp{X}$. Since $X$ is non-complete, $c\le n-1$. Suppose first that $c\ge 3$, then by Theorem \ref{4laziest} (iv) and Corollary \ref{ordering} (ii),
\begin{align*}
	\tr(X)& \le n-2 + \frac{2}{n-c+1} - \frac{2(c-1)}{n}\\
	&=\max\{\tr(K_{n-2,1,1}), \tr(K_{2,1,\cdots,1})\}\\
	&=\tr(K_{n-2,1,1}).
\end{align*}
Now suppose $c=2$. Then $X$ is a join, and we may write
\[X = \comp{Y \cup Z}\]
for some graphs $Y$ and $Z$. If $Y=K_1$, then $\comp{X} = K_1\cup Z$ where $Z$ is connected non-complete graph on $n-1$ vertices. By Lemma \ref{complement} and Theorem \ref{star},
\[\tr(X) = \tr(Z) + 1 - \frac{2}{n} \le \tr(S_{n-1}) + 1 - \frac{2}{n} < \tr(K_{n-2,1,1}).\]
If $Y\ne K_1$, let $a=\abs{V(Y)}$ and $b=\abs{V(Z)}$. By Lemma \ref{complement}, Lemma \ref{G(n,m)} and Theorem \ref{4laziest} (i),
\begin{align*}
	\tr(X)&=\tr(Y)+ \tr(Z) - \frac{2}{n} \\
	&\le \tr(K_a) + \tr(K_b) - \frac{2}{n} \\
	&\le \tr(K_2) + \tr(K_{n-2}) - \frac{2}{n}\\
	&<\tr(K_{n-2,1,1}).
\end{align*}

It remains to consider the case where $c=1$. As the statement can be verified computationally for $n\le 9$, we assume $n\ge 10$. Let
\[L = \sum_{\lambda} \lambda E_{\lambda}\]
be the Laplacian spectral decomposition of $X$. If there is a subset $S$ of eigenvalues whose multiplicities sum to $k$ where $3\le k\le n-4$, then a similar argument to the proof of Theorem \ref{star} shows 
\begin{align*}
	\tr(X) &\le \tr(E_0^{\circ 2}) + \tr\left(\left(\sum_{\lambda\in S}E_{\lambda}\right)^{\circ 2}\right) + \tr\left(\left(I-E_0 -\sum_{\lambda\in S}E_{\lambda}\right)\right)^{\circ 2})\\
	&\le \frac{(n-1)^2}{n}-\frac{2k(n-1-k)}{n^2} \\
	&\le \frac{(n-1)^2}{n}- \frac{6(n-4)}{n^2}\\
	&<\tr(K_{n-2,1,1}).
\end{align*}
Thus, the multiplicities of the eigenvalues of $L$ are one of the following:
\[(1,1,n-2), \quad (1,2,n-3),\quad (1,1,1,n-3).\]
Since $\comp{X}$ is connected and $n\ge 10$, by  Theorem \ref{n-2} and Theorem \ref{n-3}, $X=G_{n,r}$ for some even $n$ and some $r$ with $1\le r \le n/2$. Let $s = n/2-r$. Note that $G_{n,r}$ has an equitable partition $\{C_1, C_2, C_3, C_4\}$ with quotient matrix
\[Q = \pmat{s & -s & 0 & 0\\
	-r & r+s & -s & 0\\
	0& -s & r+s & -r\\
	0& 0& -s & s}.\]
It follows that $n/2$ is an eigenvalue of $L$, and
\[(E_{n/2})_{uu} =
\begin{cases}
	1 - \frac{1}{r} + \frac{s}{rn}, &  \text{if } u\in C_1\cup C_4,\\
	1 - \frac{1}{s} + \frac{r}{sn}, & \text{if } u\in C_2 \cup C_3.
\end{cases}\]
Therefore, for $n\ge 10$,
\begin{align*}
	\tr(X)&\le \tr(E_{n/2}^{\circ 2}) + \tr((I - E_{n/2})^{\circ 2})\\
	&=n-6+\frac{10}{n} + \frac{1}{r} + \frac{1}{s}\\
	&\le n-5+\frac{10}{n} + \frac{2}{n-2}\\
	&<\tr(K_{n-2,1,1}). \tag*{\sqr53}
\end{align*}

\section{Future work}
This paper established machinery to study the laziness of a graph. While we have determined the laziest members of some families of graphs, many questions remain. For example, data on graphs with up to $9$ vertices raises the following:

\begin{enumerate}[(i)]
	\item Is $DS(n-3,1)$ the laziest connected graph on $n$ vertices with connected complements?
	\item Is $P_n$ one of the graph that minimize $\tr(X)$ among all graphs on $n$ vertices?
\end{enumerate}
We notice that paths and their complements are not the only graphs attaining such minimum. This motivates the next question.

\begin{enumerate}[(i)]
	\setcounter{enumi}{2}
	\item What can we say about graphs on the same number of vertices with equal laziness?
\end{enumerate}

Our definition of laziness takes an unweighted sum of $\AMM_{uu}$ over all vertices $u$. It will be interesting, and perhaps more practical, to extend this definition to a weighted version (say, by the degrees) of laziness. 

\begin{enumerate}[(i)]
	\setcounter{enumi}{3}
	\item Study the weighted laziness
	\[\tr(X) = \sum_u w_u \AMM_{uu},\quad \text{where } w_u \ge 0,\quad \sum_u w_u=1.\]
\end{enumerate}

Finally, we can consider similar problems for quantum walks relative to other weighted adjacency matrices. It is conjectured in \cite{Godsil2023} that $K_n$ maximizes $\tr(X)$ relative to the adjacency matrix over all connected graphs on $n$ vertices. Data on graphs with up to $9$ vertices also raises the following:
\begin{enumerate}[(i)]
	\setcounter{enumi}{4}
\item  Is $S_n$ the second laziest connected graph on $n$ vertices for the adjacency matrix, unsigned Laplacian matrix and normalized Laplacian matrix?
\end{enumerate}

One may also study the laziness in a more general setting:

\begin{enumerate}[(i)]
	\setcounter{enumi}{5}
	\item Let $A$ and $D$ be the adjacency matrix and degree matrix of $X$, respectively. Given $\gamma \in \re\cup \{\infty\}$, let
	\[\tr(X,\gamma) = \sum_{\lambda} E_{\lambda} \circ E_{\lambda},\]
	where $E_{\lambda}$ is the orthogonal projection onto the $\lambda$-eigenspace of $ A + \gamma D$. Given a family of graphs, determine the critical values of $\gamma$ where the extremizers of $\tr(X, \gamma)$ change.
\end{enumerate}

\section*{Acknowledgement}
This material is based upon work supported by the National Science Foundation under Grant No.  ExpandQISE OSI-2427020 and CCF-2348399.

		\bibliographystyle{amsplain}
		\bibliography{L_amm}
		
	\end{document}